\theoremstyle{plain}
\newtheorem{lema}{Lemma}
\newtheorem{prop}[lema]{Proposition}
\newtheorem{teo}[lema]{Theorem}
\theoremstyle{remark}
\newtheorem{obs}[lema]{Remark}
\theoremstyle{definition}
\newtheorem{ej}[lema]{Example}
\newcommand{\Z}{\mathbb{Z}}
\newcommand{\R}{\mathbb{R}}
\newcommand{\F}{F_2}
\newcommand{\Fd}{F_d}
\newcommand{\M}{M_2}
\newcommand{\ZZ}{\Z[X^{\pm 1},Y^{\pm 1}]}
\newcommand{\ZZn}{\Z_n[X^{\pm 1},Y^{\pm 1}]}
\newcommand{\vv}{\textbf{v}}
\newcommand{\hh}{\textbf{h}}
\newcommand{\grid}{\R \times \Z \cup \Z \times \R}
\newcommand{\inv}{\Lambda}
\newcommand{\rk}{\textrm{rk}}
\begin{document}

\title[Invariants for metabelian groups of prime power exponent, colorings and stairs]{Invariants for metabelian groups of prime power exponent, colorings and stairs}

\author[J.A. Barmak]{Jonathan Ariel Barmak $^{\dagger}$}

\thanks{$^{\dagger}$ Researcher of CONICET. Partially supported by grant PICT-2017-2806, PIP 11220170100357CO, UBACyT 20020160100081BA.}

\address{Universidad de Buenos Aires. Facultad de Ciencias Exactas y Naturales. Departamento de Matem\'atica. Buenos Aires, Argentina.}

\address{CONICET-Universidad de Buenos Aires. Instituto de Investigaciones Matem\'aticas Luis A. Santal\'o (IMAS). Buenos Aires, Argentina. }

\email{jbarmak@dm.uba.ar}

\begin{abstract}
We study the free metabelian group $M(2,n)$ of prime power exponent $n$ on two generators by means of invariants $M(2,n)'\to \Z_n$ that we construct from colorings of the squares in the integer grid $\grid$. In particular we improve bounds found by M.F. Newman for the order of $M(2,2^k)$. We study identities in $M(2,n)$, which give information about identities in the Burnside group $B(2,n)$ and the restricted Burnside group $R(2,n)$.
\end{abstract}

\subjclass[2010]{20F50, 05E15, 20D10, 20F45, 20F70}

\keywords{Burnside groups, Metabelian groups, Winding invariant, colorings, identities.}

\maketitle

%\footnotesize
%%\small
%\tableofcontents
%\normalsize

\section{Content} \label{content}

In this paper we study the free metabelian group $M(2,n)$ of prime power exponent $n$ in two generators, that is the quotient of the Burnside group $B(2,n)$ by its second derived subgroup. Contrary to Burnside groups, the groups $M(2,n)$ are known to be finite for every $n$. However their order has been determined in very few cases if $n$ is not prime. We define invariants (group homomorphisms) $\Lambda: M(2,n)'\to \Z_n$ and use them to prove that certain identities do not hold in $M(2,n)$ and to improve bounds for its order obtained by M.F. Newman in \cite{New}. Note that the abelianization of $M(2,n)$ has order $n^2$, and since $M(2,n)'$ is finite abelian of exponent $n$, there is a monomorphism $M(2,n)'\to (\Z_n)^r$ (a product of invariants $\Lambda$ as above) for $r$ big enough, and the order of $M(2,n)'$ is then the order of the image of that map. 

Algebraic tools have been used to study Engel identities and the nilpotence class of $M(2,n)$ in works by Bachmuth-Heilbronn-Mochizuki \cite{BHM} and Dark-Newell \cite{DN}. Here we use instead combinatorial ideas based on a construction introduced in \cite{wind}, called the winding invariant. To each word $w$ in the derived subgroup $\F'$ of the free group $\F=F(x,y)$ of rank $2$, we associate a Laurent polynomial $P_w\in \ZZ$. This can be seen as a labeling of finitely many squares in the grid $\grid$ by non-zero integer numbers. We use different colorings of the squares in the grid to define the invariants $M(2,n)'\to \Z_n$.

Some results that we prove are: $[x,y]^{\frac{n}{2}}$ is not a product of $n$th powers in $\F$ if $4|n$ (Proposition \ref{unamas}), $M(2,2^k)$ satisfies the so called $(k+3)$th-Morse identity but not the $(k+1)$th (Proposition \ref{morse}), $n^22^{\frac{5n^2}{16}-1}\le |M(2,n)|\le n^{(n-1)^2}$ if $n \ge 8$ is a power of $2$ (Theorem \ref{cotainf} and Proposition \ref{cotasup}). We also give a lower bound for the order of the restricted Burnside group $R(2,8)$ in Proposition \ref{restric} and obtain a complete invariant for $M(2,4)$ in Examples \ref{ejm24} and \ref{ej210}.

We believe that there is more value in the combinatorial techniques introduced in this article than in the results we have chosen to illustrate how to apply them. This paper is the third in the winding invariant series, after \cite{ac} and \cite{wind} and it is a continuation of the results obtained in \cite{wind} Subsection 6.3. 
% we present in this article are more interesting than the results obtained in this note are modest and it is the combinatorial techniques that we find original and could have more value.

\section{Background on the restricted Burnside problem and metabelian Burnside groups} \label{recall}

In 1902 William Burnside asked whether a finitely generated group of exponent $n$ is necessarily finite \cite{Bur}. Any such group $G$ generated by $d$ elements is a quotient of the free group of exponent $n$, $B(d,n)=\langle x_1,x_2, \ldots, x_d| w^n, w\in F(x_1,x_2, \ldots, x_d)\rangle$, also called the Burnside group of exponent $n$ and $d$ generators. Thus Burnside question is about finiteness of Burnside groups. Burnside proved that $B(d,n)$ is finite for $n= 3$ and arbitrary $d$. Sanov proved in \cite{San0} the finiteness of $B(d,4)$ and M. Hall \cite{Hal} did it for $B(d,6)$. In 1968 Adian and Novikov proved that not all Burnside groups are finite. Indeed they proved in \cite{AN} that for $d\ge 2$ and $n$ odd greater than or equal to $4381$, $B(d,n)$ is infinite. Later Adian \cite{Adi} was able to replace the number $4381$ by $665$. Ivanov proved in \cite{Iva} that for $n\ge 2^{48}$ divisible by $2^9$, $B(d,n)$ is infinite. Later Lys\"enok improved this result. In particular there exist only finitely many exponents $n$ for which $B(d,n)$ can be finite. The complete answer to Burnside's problem is still unknown. For instance, it has not been decided yet whether $B(2,5)$ and $B(2,8)$ are finite.

In 1950 Magnus \cite{Mag} formulated a variant of Burnside's problem, known as the restricted Burnside problem: Given $d,n\ge 1$, is there a largest finite quotient $R(d,n)$ of $B(d,n)$? This is equivalent to the following question: are there only finitely many finite groups (up to isomorphism) generated by $d$ elements and with exponent $n$? Indeed if we define $N$ to be the intersection of all the finite-index subgroups of $B(d,n)$, then $N$ is normal and every $d$-generator finite group of exponent $n$ is a homomorphic image of $B(d,n)/N$. The restricted Burnside problem asks whether $B(d,n)/N$ is finite. In that case, $R(d,n)=B(d,n)/N$ is called the \textit{restricted Burnside group} or \textit{universal finite $d$-generator group of exponent $n$}. In 1958 Kostrikin \cite{Kos1,Kos2,Kos3} proved that $R(d,p)$ exists for every prime $p$. This was previous to Adian-Novikov's proof of the existence of infinite Burnside groups and was key in the development of their result (see \cite{Adi2}). P. Hall and Higman \cite{HH} had already reduced the proof of the restricted Burnside problem to the proof of the prime power exponent case under the assumption that there are only finitely many finite simple groups of a given exponent and that the outer automorphism group of any finite simple group is solvable. In 1990 Zel'manov \cite{Zel1,Zel2} gave a positive answer to the restricted Burnside problem for prime power exponent and this, together with Hall-Higman's reduction and the classification of finite simple groups, provided a positive answer to the restricted Burnside problem for arbitrary exponent. One of the key elements in Kostrikin and Zel'manov's work is the relationship with Engel identities in Lie algebras. In fact, for prime exponent $p$, the restricted Burnside problem can be reduced to the proof that every Lie algebra over $\Z_p$ which satisfies the so called Engel identity $E_{p-1}$ is locally nilpotent.

Although $R(d,n)$ is known to exist for each $d,n$, little is known about its structure, in general. In particular the order of $R(d,n)$ is known in very few cases. For each $d$ and each prime power $n$, the $p$-quotient algorithm can be used to obtain a consistent polycyclic/power-commutator presentation of $R(d,n)$ and then to compute its order (see \cite[Section 11.7]{Sim} and \cite{OV, Vau}), however a direct implementation of this algorithm takes in general too much time. Havas, Wall and Wamsley \cite{HWW} proved that $|R(2,5)|=5^{34}$, O'Brien and Vaughan-Lee \cite{OV} proved that $|R(2,7)|=7^{20416}$, Newman and O'Brien \cite{NO} proved that $|B(5,4)|=2^{2728}$, $|R(3,5)|=5^{2282}$. Also $|B(3,4)|=2^{69}$ (\cite{BKW}) and $|B(4,4)|=2^{422}$ (\cite{AHN}). The order of $B(d,4)$ for arbitrary $d$ is still unknown. Upper and lower bounds for $|R(d,n)|$ have been given by Vaughan-Lee and Zel'manov \cite{VZ1,VZ2,VZ3} and by Groves and Vaughan-Lee \cite{GV}.

It has long been known that there are connections between groups of prime power exponent and Engel conditions. The first Engel word is $e_1=[y,x]=yxy^{-1}x^{-1}\in \F$. The $m$th Engel word is defined by $e_m=[y,e_{m-1}]\in \F$. A group $G$ is said to be $m$-Engel if $e_m(a,b)=1\in G$ for every $a,b\in G$ \footnote{When the convention $(x,y)=x^{-1}y^{-1}xy$ is used for commutators, Engel words are defined by $\widetilde{e}_1=(x,y)$ and $\widetilde{e}_{m}=(\widetilde{e}_{m-1},y)$. Both notions are related by the identity $\widetilde{e}_m=e_m(x^{-1},y^{-1})^{-1}$, so the definitions of $m$-Engel group coincide.}. Every group of exponent $3$ is $2$-Engel. This appears in \cite{Lev} but is already implicit in Burnside's paper \cite{Bur}. It is equivalent to saying that $e_2\in \F$ is a product of cubes. Every group of exponent $4$ is $5$-Engel ($e_5$ is a product of fourth powers). Kostrikin observed \cite{Kos60, Kos74} that if the $6$th Engel word $e_6$ is not a product of $5$th powers in $\F$, then $B(2,5)$ is infinite.

%A group $G$ is said to satisfy the $m$th Engel congruence if $e_m(a,b)$ lies in $\gamma_{m+2}(G)$, the $(m+2)$th term in the lower central series, for every $a,b\in G$. Sanov proved in \cite{San} that a group of exponent $p^k$ for $p$ prime satisfies the $(kp^k-1)$th Engel congruence. An alternative proof has been given by Glauberman, Krause, and Struik \cite{GKS}. Bruck proposed in \cite{Bru} the following conjecture:
%
%\begin{conj}(Bruck)
%Every group of exponent $p^k$ satisfies the $(kp^k-(k-1)p^{k-1}-1)$th Engel congruence.
%\end{conj}
%
%The case $k=1$ of the conjecture was proved by Kostrikin. That is, a group of prime exponent $p$ satisfies the $(p-1)$th Engel congruence. Gupta and Newman proved in \cite{GN} the case $k=2$ of the conjecture and they proved in general that a group of exponent $p^k$ satisfies the $(kp^k-1-\sum\limits_{i=0}^{k-1}p^i+k)$th Engel congruence.

In the quest for finding a solution to the restricted Burnside problem, before Zel'manov's proof, other relations apart from Engel conditions had been searched among groups of prime power exponent. Some papers have explored this path by imposing extra relations to the groups. In particular the metabelian law. Recall that a group $G$ is termed metabelian if $G''=1$. Every $d$-generated metabelian group of exponent $n$ is a quotient of the free metabelian group $M(d,n)$ of exponent $n$. If $G$ is a group and $n\ge 1$, we denote by $G^n$ the subgroup of $G$ generated by the $n$th powers. If $\Fd$ denotes the free group of rank $d$, then $M(d,n)=\frac{\Fd}{\Fd^n\Fd''}$. As explained by Newman in \cite{New}, one advantage of these groups is that they are always finite. Indeed, if we define $R=\Fd^n \Fd'$ and $S=R^nR'$, then $|\Fd:R|=n^d$, so by Schreier formula $\rk(R)=1+(d-1)n^d$. Similarly $|R:S|=n^{\rk(R)}=n^{1+(d-1)n^d}$, so $|\Fd/S|=n^{1+d+(d-1)n^d}$. On the other hand it is easy to see that $S\leqslant \Fd^n \Fd''$. Thus $|M(d,n)|\leq n^{1+d+(d-1)n^d}$. If $n=p^k$ for $p$ a prime, Newman computes also a lower bound for $|M(d,n)|$, which is $p^{1+d(k-1)+(d-1)p^{(k-1)d}}$. This is explained later in this article. Since $M(d,n)$ is finite, its order gives a lower bound for $|R(d,n)|$. Moreover, if we know that a certain identity does not hold in $M(d,n)$ then it will not hold in $R(d,n)$ nor in $B(d,n)$. Recall that for a group $G$, the lower central series is defined by $\gamma_1(G)=G$ and $\gamma_{l+1}(G)=[G, \gamma_l(G)]$ for $l\ge 1$. A group $G$ is nilpotent if there exists $l\ge 0$ such that $\gamma_{l+1}(G)=1$, and the smallest $l$ with this property is the nilpotency class of $G$. If $n$ is a prime power, $M(d,n)$ is nilpotent and its class is known in many cases by results of Meier-Wunderli \cite{Mei}, Bachmuth-Heilbronn-Mochizuki \cite{BHM}, Dark-Newell \cite{DN}, Gupta-Newman-Tobin \cite{GNT}, Newman \cite{New}. In particular the class of $M(2,p^k)$ is $k(p-1)p^{k-1}$. The order of $M(d,n)$ is known when $n$ is prime (see \cite{Mei, New}), but the general case is open. Even for $d=p=2$, Newman's bounds only give $n^22^{\frac{n^2}{4}-1} \le |M(2,n)|\le n^{n^2+3}$ and it is not known, asymptotically, which of the two bounds is closer to reality. Given a positive integer $d$ and a prime power $n$, we can compute $|M(d,n)|$ from any finite presentation using coset enumeration. Moreover, the nilpotent quotient algorithm computes a consistent power-commutator presentation (see \cite{HN,New, Vau}) for $M(d,n)$ from which its order can be directly read off. However, the number of operations needed to complete this task is unknown and $|M(d,n)|$ has been computed in very few cases. Once we have such a presentation it can be decided whether an arbitrary word $w\in F(x_1,x_2, \ldots, x_d)$ is trivial in $M(d,n)$, but again, there is no general method known to do this for arbitrary $w,d,n$ without implementation of this algorithm or other variants. For example, apart from the cases in which $n$ is prime, the order of $M(d,n)$ is known in the following cases: $(d,n)\in\{(2,4),(3,4),(4,4),(5,4),(2,8),(3,8),(4,8),(2,9),(3,9)\}$ (computed by Gupta-Tobin \cite{GT}, Hermanns \cite{Her} and Newman \cite{New}). The order of $M(2,16)$ is known to be smaller than or equal to $2^{376}$ (\cite{New}). For $p$ prime and $n=p^2$, Skopin \cite{Sko85} has shown that $|M(2,n)|\le p^{\frac{3}{2}p^4-\frac{3}{2}p^3-2p^2+p+5}$, which improves Newman's general bound that gives here $|M(2,n)|\le p^{2p^4+6}$. In \cite{Sko90} Skopin proves that the largest class $36$ quotient $M(2,27)/\gamma_{37}(M(2,27))$ of $M(2,27)$ has order less than or equal to $3^{648}$.

\section{Horizontal invariants}

%This paper is the third in the Winding invariant series, after \cite{wind} and \cite{ac} and it is a continuation of the results obtained in \cite{wind} Subsection 6.3. 

We start by recalling the constructions and results of \cite{wind} that we will need. We will also recall some of the proofs for the sake of completeness and in order to make clear subsequent ideas.

An element $z\in \F'$ in the derived subgroup of the free group $\F$ generated by $x,y$ is a word $x_1^{\epsilon_1}x_2^{\epsilon_2}\ldots x_l^{\epsilon_l}$ in the letters $x,y$ with total exponents $\exp(x,z)=\exp(y,z)=0$. One such word has an associated closed curve $\gamma_z$ in the integer grid $\grid \subseteq \R^2$ which begins in the origin and is a concatenation of curves $\gamma_1, \gamma_2, \ldots , \gamma_l$. The curve $\gamma_i$ moves one unit horizontally if $x_i=x$ and vertically if $x_i=y$, to the right or upwards if $\epsilon_i=1$ and to the left or downwards if $\epsilon_i=-1$. The winding invariant of $z$ is the Laurent polynomial $P_z=\sum\limits_{i,j\in \Z} a_{i,j}X^iY^j \in \ZZ$, where $a_{i,j}$ is the winding number of $\gamma_z$ around $(i+\frac{1}{2}, j+\frac{1}{2})\in \R^2$. For instance, if $z=[x,y]=xyx^{-1}y^{-1}$, the curve $\gamma_z$ traverses the boundary of the square with vertices $(0,0), (1,0), (1,1), (0,1)$ once counterclockwise, so $P_z=1 \in \ZZ$. The winding invariant map $W:\F'\to \ZZ$ which maps $z$ to $P_z$ has many different interpretations \cite{wind}. Clearly $W$ is a group homomorphism. It is surjective and its kernel is $\F''$ (\cite[Theorem 14]{wind}) and thus, it induces an isomorphism $\M' \to \ZZ$ from the derived subgroup of the free metabelian group $\M=\F/\F''$ of rank $2$. This map will also be denoted by $W$. 

An elementary property which follows easily from the definition is:

\begin{obs} \label{remark}
If $z\in \F'$ and $u\in \F$, then $P_{uzu^{-1}}=X^iY^jP_z$ where $(i,j)=(\exp(x,u),\exp(y,u))$. Therefore, $P_{[u,z]}=(X^iY^j-1)P_z$.
\end{obs}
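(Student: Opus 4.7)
The plan is to exploit the geometric description of $W$ directly. Set $(i,j)=(\exp(x,u),\exp(y,u))$, so that the path $\gamma_u$ ends at $(i,j)$ and $\gamma_z$ is a closed loop based at the origin (because $z\in\F'$). A direct inspection of how the associated curve is built from a word shows that $\gamma_{uzu^{-1}}$ decomposes as the concatenation $\gamma_u \cdot \bigl((i,j)+\gamma_z\bigr) \cdot \gamma_u^{-1}$, where $(i,j)+\gamma_z$ denotes the translate of $\gamma_z$ by the vector $(i,j)$, and $\gamma_u^{-1}$ is $\gamma_u$ traversed backwards.

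Next I would compute the winding number of this closed loop around an arbitrary half-integer point $p=(a+\tfrac12,b+\tfrac12)$, using additivity of the line integral $\tfrac{1}{2\pi i}\oint \frac{d\zeta}{\zeta-p}$ along concatenations. The contribution of $\gamma_u$ cancels exactly that of $\gamma_u^{-1}$, so the winding number of $\gamma_{uzu^{-1}}$ around $p$ coincides with the winding number of $(i,j)+\gamma_z$ around $p$; by translation invariance this in turn equals the winding number of $\gamma_z$ around $p-(i,j)$, i.e.\ the coefficient $a_{a-i,b-j}$ of $P_z$. Reindexing the monomial expansion yields $P_{uzu^{-1}}=\sum_{a,b} a_{a-i,b-j}X^aY^b = X^iY^jP_z$.

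For the commutator formula I would then use that $W:\F'\to \ZZ$ is a group homomorphism (already noted in the excerpt), so that $P_{[u,z]}=P_{uzu^{-1}}+P_{z^{-1}}$, together with the observation that $\gamma_{z^{-1}}$ is $\gamma_z$ reversed and hence $P_{z^{-1}}=-P_z$. Combining these gives $P_{[u,z]}=(X^iY^j-1)P_z$. I do not anticipate any real obstacle: the statement is essentially an unpacking of the definition, and the only delicate point is the cancellation of the $\gamma_u$ and $\gamma_u^{-1}$ contributions to the winding integral, which works precisely because $\gamma_u^{-1}$ is literally the reverse of $\gamma_u$ and the integrand depends only on the fixed point $p$.
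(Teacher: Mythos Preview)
Your proposal is correct and is precisely the natural unpacking of the definition; the paper does not give a proof of this remark beyond stating that it ``follows easily from the definition'', and your argument is exactly the expected one.
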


We recall the following result from \cite{wind}:

\begin{prop} \cite[Proposition 8]{wind}  \label{dospotencias}
Let $n\ge 1$. If $z\in \F'$ is a product of two $n$th powers, then $P_z$ is a multiple of $1+m+m^2+\ldots +m^{n-1}$ for some monomial $m=X^iY^j$.
\end{prop}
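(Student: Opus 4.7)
My plan is to decompose each of $u^n$ and $v^n$ as a pure translation $t^{\pm n}$ followed by a product of conjugates of an element of $\F'$, and then use Remark \ref{remark} to read off the factor $1 + m + \cdots + m^{n-1}$. Write $z = u^n v^n \in \F'$, set $(a, b) := (\exp(x, u), \exp(y, u))$, and note that $z \in \F'$ forces $(\exp(x, v), \exp(y, v)) = -(a, b)$. Put $t := x^a y^b$, $m := X^a Y^b$, and define $\hat u := t^{-1} u$ and $\tilde v := t v$; an exponent count shows $\hat u, \tilde v \in \F'$, with $u = t \hat u$ and $v = t^{-1} \tilde v$.

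The calculation rests on the group-theoretic identity $(\alpha \beta)^n = \alpha^n \prod_{k=n-1}^{0} (\alpha^{-k} \beta \alpha^{k})$, valid in any group and proved by telescoping. Applied to $(\alpha, \beta) = (t, \hat u)$ and to $(\alpha, \beta) = (t^{-1}, \tilde v)$, it yields $u^n = t^n A$ and $v^n = t^{-n} B$, where $A$ and $B$ are products of conjugates of $\hat u$ and $\tilde v$ by powers of $t$, respectively, and in particular lie in $\F'$. Hence $z = t^n A t^{-n} \cdot B$ is a product of two elements of $\F'$, and applying the winding-invariant homomorphism $W : \F' \to \ZZ$ together with Remark \ref{remark} gives
\begin{align*}
P_z &= P_{t^n A t^{-n}} + P_B = m^n P_A + P_B \\
&= m^n P_{\hat u} \sum_{k=0}^{n-1} m^{-k} + P_{\tilde v} \sum_{k=0}^{n-1} m^k = \bigl(1 + m + \cdots + m^{n-1}\bigr) \bigl(m P_{\hat u} + P_{\tilde v}\bigr),
\end{align*}
which exhibits $P_z$ as a multiple of $1 + m + \cdots + m^{n-1}$. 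In the degenerate case $(a,b) = (0,0)$ one has $t = 1$, $m = 1$, and $u, v \in \F'$ already; then $1 + m + \cdots + m^{n-1} = n$ and $P_z = n(P_u + P_v)$ by pure additivity, so the claim still holds.

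I expect the only genuine subtlety to be the choice of factorizations: placing $t$ on the left of $\hat u$ but $t^{-1}$ on the left of $\tilde v$, so that the $t^n$ coming from $u^n$ and the $t^{-n}$ coming from $v^n$ sit adjacent in $z$. This prevents a residual term of the form $(x^a y^b)^n (x^{-a} y^{-b})^n$ from appearing in the middle, which would otherwise require a separate (and harder) analysis. With the bookkeeping arranged this way, the rest is a mechanical application of Remark \ref{remark} and the additivity of $W$.
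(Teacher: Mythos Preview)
Your proof is correct and follows the same underlying strategy as the paper's: write $z$ as a product of conjugates of elements of $\F'$ and apply Remark \ref{remark} together with the additivity of $W$. The paper's execution, however, is considerably shorter. Instead of introducing the auxiliary $t$, $\hat u$, $\tilde v$ and expanding $u^n$ and $v^n$ separately, the paper observes directly that $uv \in \F'$ (immediate from $z=u^nv^n\in\F'$) and uses the single telescoping identity
\[
u^n v^n \;=\; (uv)\,\bigl(v^{-1}(uv)v\bigr)\,\bigl(v^{-2}(uv)v^{2}\bigr)\cdots\bigl(v^{1-n}(uv)v^{n-1}\bigr),
\]
which gives $P_z = P_{uv}\,(1+m+\cdots+m^{n-1})$ with $m=X^{\exp(x,v^{-1})}Y^{\exp(y,v^{-1})}$ in one line. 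This avoids your two-step detour, the recombination $m^n\sum_{k} m^{-k}=m\sum_k m^k$, and the separate discussion of the degenerate case $(a,b)=(0,0)$ (which your general computation already covers anyway). Your version does have the mild advantage of identifying the cofactor explicitly as $mP_{\hat u}+P_{\tilde v}$, but for the purposes of the paper only divisibility by $1+m+\cdots+m^{n-1}$ is needed.
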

\begin{proof}
If $z=a^nb^n=(ab)b^{-1}(ab)bb^{-2}(ab)b^2\ldots b^{1-n}(ab)b^{n-1}$, then $ab\in \F'$ and by Remark \ref{remark} $P_{z}=P_{ab}(1+m+m^2+\ldots+m^{n-1})$, for $m=X^{\exp(x,b^{-1})}Y^{\exp(y,b^{-1})}$.
\end{proof}
 
In \cite{wind} we proved that the \textit{area invariant} $A:\F' \to \Z$ defined by $A(z)=P_z(1,1)$, maps a product of $n$th powers in $\F$ to a multiple of $n/2$ when $n$ is even and to a multiple of $n$ when $n$ is odd. In particular a product of $4$th powers has even area. This is not a sufficient condition for an element $z\in \M'$ to be a product of $4$th powers in $\M$. There is another invariant $\kappa:\F' \to \Z$ which, instead of adding all the coefficients of $P_z$, adds some of them and subtracts others. A product of 4th powers must have $\kappa(z)$ divisible by $4$. The main tools that we develop in this article are different generalizations of the invariant $\kappa$, which take into account higher powers of $2$.

In Section \ref{final} we give an example in which the invariant technique successfully describes a variety of groups. We will see that the area invariant completely characterizes the free nilpotent group $N(2,n)$ of class $2$, rank $2$ and exponent $n$ for any $n\ge 1$, determining an isomorphism $\overline{A}:N(2,n)' \to \Z_l$ where $l=\frac{n}{2}$ if $n$ is even and $l=n$ if $n$ is odd. In particular the order $|N(2,n)|$ is $\frac{n^3}{2}$ or $n^3$.  For the free metabelian group $M(2,n)$ of exponent $n$ the problem is much harder and other invariants will be introduced. 

Let $k\ge 1, n=2^k$. A \textit{good coloring} of $\Z_n$ is a function $c:\Z_n\to \{ \textrm{black}, \textrm{white}\}$ which satisfies that for every $a\in \Z_n$, the color $c(a)$ of $a$ is different from the color $c(a+\frac{n}{2})$ of $a+\frac{n}{2}$. A function $\varphi :\Z \times \Z \to \Z_n$ together with a good coloring $c$ induces a coloring of the squares in the grid $\grid$: the square (whose lower left corner is) $(i,j)$ is painted with color $c\varphi (i,j)$. The invariant $\inv=\inv_{\varphi, c}:\ZZ \to \Z_n$ associated with $\varphi$ and $c$ is defined as follows. Given $P \in \ZZ$, define $$\inv(P)=\sum\limits_{(i,j) \textrm{ is black}} P\{(i,j)\}- \sum\limits_{(i,j) \textrm{ is white}} P\{(i,j)\}  \in \Z_n.$$ Here $P\{(i,j)\}$ denotes the coefficient of the monomial $X^iY^j$. The composition $\inv W:\F'\to \Z_n$ will also be denoted by $\inv=\inv_{\varphi, c}$. That is, for $z\in \F'$, $\inv (z)=\inv (P_z)$. Of course, $\inv: \F'\to \Z_n$ is a group homomorphism. Since $\textrm{ker}(W)=\F''$, $\Lambda$ induces a map $\M'=\F' /\F''\to \Z_n$ which is denoted $\inv$ as well. Given a region $A$ of the plane, which is a union of squares in the grid, we define the invariant $\inv(A)$ of $A$ as the number of black squares in $A$ minus the number of white squares, modulo $n$.

%The horizontal and vertical invariants are of this type, for $\varphi$...

%Figura

\begin{ej} \label{ejemplito}
Let $k=2$, so $n=4$, and let $c:\Z_4 \to \{\textrm{black, white}\}$ be the coloring that paints $0,1$ with black and $2,3$ with white. Let $\varphi :\Z \times \Z \to \Z_4$ be defined by $\varphi (a,b)=b \in \Z_4$. The coloring associated to $\varphi$ and $c$ is illustrated in Figure \ref{ej1}, at the left. All the rows congruent with $0$ or $1$ modulo $4$ are painted with black, while the remaining rows are painted with white.  

\begin{figure}[h] 
\begin{center}
\includegraphics[scale=1.5]{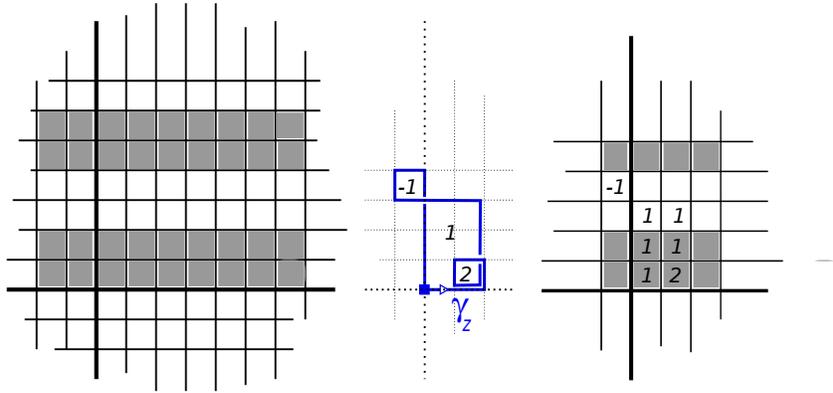}
\caption{The coloring associated to $\varphi$ and $c$, the curve $\gamma_z$ with the winding numbers, and the superposition which computes $\inv (z)$.}\label{ej1}
\end{center}
\end{figure}

Let $z=x^2yx^{-1}y^{-1}xy^3x^{-3}yxy^{-4}\in \F'$. The curve $\gamma_z$ is depicted in the center of Figure \ref{ej1}, together with the (non-zero) winding numbers of the curve around the squares. The winding invariant of $z$ is thus $P_z=1+2X+Y+XY+Y^2+XY^2-X^{-1}Y^3\in \ZZ$. The invariant $\inv_{\varphi,c}(z)$ is just the sum of the coefficients in black squares minus the sum of the coefficients in white squares. That is, $\inv (z)=1+2+1+1-(1+1-1)=0\in \Z_4$.
\end{ej}

\begin{lema} \label{facil}
Let $n,d$ be positive integers such that $2d$ divides $n$. Then the elements of the integer interval $[n]=\{0,1,2,\ldots, n-1\}$ can be matched in such a way that matched elements differ in $d$.   
\end{lema}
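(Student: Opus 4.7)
The plan is to exploit the hypothesis $2d \mid n$ by partitioning $[n]$ into consecutive blocks of length $2d$ and constructing the matching locally within each block.

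Write $n = 2dm$ for some positive integer $m$, and split the interval $[n]$ into the $m$ disjoint blocks
\[
B_k = \{2dk,\, 2dk+1,\, \ldots,\, 2dk + 2d - 1\}, \qquad k = 0, 1, \ldots, m-1.
\]
Each $B_k$ has $2d$ elements and splits naturally into a ``lower half'' $\{2dk + i : 0 \le i < d\}$ and an ``upper half'' $\{2dk + d + i : 0 \le i < d\}$. I would match $2dk + i$ with $2dk + i + d$ for each $i \in \{0, 1, \ldots, d-1\}$. This yields $d$ pairs inside $B_k$, and the difference between matched elements is exactly $d$ by construction. Taking the union over all $k$ gives a matching of $[n]$ with the required property.

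The only thing to check is that this really is a matching of the whole set: each block contributes $d$ pairs, so altogether we obtain $md = n/2$ pairs, using every element of $[n]$ exactly once. There is no real obstacle here; the hypothesis $2d \mid n$ is used precisely to guarantee that the blocks $B_k$ fit evenly into $[n]$, so that no element is left unmatched at the top end of the interval.
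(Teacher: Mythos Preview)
Your proof is correct and essentially identical to the paper's: the paper matches $i$ with $i+d$ for $0\le i\le d-1$ to cover $[2d]$ and then says ``translations of them cover the whole set $[n]$'', which is exactly your block decomposition made explicit.
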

\begin{proof}
For $0\le i \le d-1$, match $i$ with $i+d$. These pairs cover the set $[2d]$, and translations of them cover the whole set $[n]$.
\end{proof}

\begin{lema} \label{facil2}
Let $k\ge 1$, $n=2^k$. Let $c:\Z_n \to \{ \textrm{black, white} \}$ be a good coloring. 

(i) Let $a, a+b, a+2b, \ldots, a+(n-1)b$ be an arithmetic progression of length $n$ in $\Z$. When we see this progression in $\Z_n$, the number of black terms minus the number of white terms is a multiple of $n$.

(ii) Let $a, a+b, a+2b, \ldots, a+(\frac{n}{2}-1)b$ be an arithmetic progression of length $\frac{n}{2}$ in $\Z$ with $b$ even. When we see this progression in $\Z_n$ the number of black terms minus the number of white terms is a multiple of $\frac{n}{2}$. Moreover, if $n$ divides $b$, this number is $\frac{n}{2}$ modulo $n$ and if $n$ does not divide $b$, this number is $0$ modulo $n$.
\end{lema}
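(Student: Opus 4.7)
The plan is to exploit the pairing property of the good coloring: $c(v)\ne c(v+n/2)$ for every $v\in\Z_n$. Write $b=2^sb'$ with $b'$ odd and $s\ge 0$; then the subgroup of $\Z_n$ generated by $b$ is $2^{\min(s,k)}\Z_n$, and the terms of the progression, viewed in $\Z_n$, occupy a coset of this subgroup with each distinct residue hit the same number of times. The behaviour of the count therefore depends only on whether $s\ge k$ (degenerate case $b\equiv 0\pmod n$) or $s<k$, together with whether the coset is closed under adding $n/2$.

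For part (i), if $s\ge k$ then all $n$ terms equal $a$ in $\Z_n$, so the count is $\pm n\equiv 0\pmod n$. If $s<k$, the progression takes $2^{k-s}$ distinct values, each repeated $2^s$ times. Since $s\le k-1$, we have $n/2=2^{k-1}\in 2^s\Z_n$, so the coset of distinct residues is closed under the involution $v\mapsto v+n/2$. By the good coloring, each such pair contributes opposite signs, giving a net $0$ for the pair and hence a total count of $0\pmod n$.

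For part (ii), $b$ is even so $s\ge 1$. If $n\mid b$ (equivalently $s\ge k$) the $n/2$ terms all coincide with $a$, and the count is $\pm n/2\equiv n/2\pmod n$. Otherwise $1\le s\le k-1$ (equivalently $n\nmid b$), and the period of the sequence in $\Z_n$ is $n/2^s=2^{k-s}$, which divides the length $n/2=2^{k-1}$ precisely because $s\ge 1$. The progression therefore makes exactly $2^{s-1}$ full passes around the coset $a+2^s\Z_n$, and by the pairing argument of part (i) each pass contributes $0$, so the total count is $0\pmod n$. In either case the count is a multiple of $n/2$, which verifies the first assertion of (ii).

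The only delicate point is the step in (ii) where we need the period $2^{k-s}$ to divide the length $n/2$; this is precisely where the hypothesis ``$b$ even'' is used, to guarantee $s\ge 1$. Once that divisibility is in place, everything reduces cleanly to the involution $v\mapsto v+n/2$ supplied by the definition of a good coloring.
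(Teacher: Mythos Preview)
Your proof is correct and uses essentially the same idea as the paper: pair terms via the involution $v\mapsto v+n/2$ guaranteed by the good-coloring condition. The only cosmetic difference is that the paper phrases the pairing on the index set $[n]$ (invoking the preceding matching Lemma~\ref{facil} to match indices at distance $n/(2d)$, where $d=\gcd(n,b)$), whereas you work directly with the coset of values $a+2^s\Z_n$ and its periodicity; the two arguments are equivalent.
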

\begin{proof}
We prove first part (i). If $n$ divides $b$, the $n$ numbers in the progression have the same color. Suppose then that $n \nmid b$. Let $d\in \Z$ be the greatest common divisor of $n$ and $b$. By assumption $d<n$, so $\frac{n}{2d} \in \Z$ and $2 \frac{n}{2d}$ divides $n$. By Lemma \ref{facil}, $[n]$ admits a matching with matched elements of difference $\frac{n}{2d}$. Thus, the set $a+b[n]=\{a,a+b,a+2b, \ldots, a+(n-1)b\}$ admits a matching with matched elements of difference $b\frac{n}{2d}$. But $b\frac{n}{2d}=\frac{b}{d}\frac{n}{2}$ is an odd multiple of $\frac{n}{2}$. Since $c$ is good, matched elements have different colors, so the sequence has the same number of black terms and white terms.

For part (ii), if $n | b$, the $\frac{n}{2}$ terms of the progression have the same color. Otherwise $d=\mathrm{gcd}(n,b)<n$ and $d$ is even by hypothesis. Then $2 \frac{n}{2d}$ divides $\frac{n}{2}$ and by Lemma \ref{facil}, $[\frac{n}{2}]$ can by matched with difference $\frac{n}{2d}$, so $a+b[\frac{n}{2}]$ admits a matching with difference $b\frac{n}{2d}$, and as in the first case matched elements receive opposite colors. 
\end{proof}

Let $n=2^k$ for some $k\ge 2$. Let $c:\Z_n \to \{\textrm{black, white}\}$ be a good coloring. Define a group homomorphism $\varphi:\Z \times \Z \to \Z_n$ by $\varphi (1,0)=0$, $\varphi (0,1)=1$. Let $\hh=\inv_{\varphi, c}:\F' \to \Z_n$ be the associated invariant. It is called the horizontal invariant associated to $c$. In this case all the squares in a row are painted with the same color. If two rows are $\frac{n}{2}$ units away one from the other, then they are painted with opposite colors. This says in particular that among $n$ consecutive squares in a column, half of them are black and the other half are white. If the distance between two rows is a multiple of $n$, they have the same color. The invariant in Example \ref{ejemplito} is of this kind.

\begin{teo} \label{omega}
Let $n=2^k$ for some $k\ge 2$. Let $c$ be a good coloring of $\Z_n$. If $z\in \F'$ lies in $\F^n\F''$, then $\hh (z)=0 \in \Z_n$. In particular, any element in $\F'$ which is a product of $n$th powers of elements in $\F$ has trivial $\hh$-invariant.
\end{teo}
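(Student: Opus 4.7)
The plan is to show $\hh$ vanishes on $\F^n\F''\cap\F'$ by decomposing a product of $n$-th powers in $\F'$ into an ``$xy$-skeleton'' piece $\prod_i u_i^n$ (with $u_i = x^{a_i}y^{b_i}$) and a ``residual'' piece whose winding polynomial is a sum of monomial multiples of norm polynomials $N_\mu := 1 + \mu + \mu^2 + \cdots + \mu^{n-1}$, and then handling each separately. Since $\F''\subseteq\ker W\subseteq\ker\hh$, I may take $z = \prod_{i=1}^r w_i^n \in \F'$ and work in $\M$.

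Write $w_i = u_i z_i$ with $u_i = x^{a_i}y^{b_i}$ and $z_i\in\M'$. Iterating the manipulation in the proof of Proposition \ref{dospotencias}, $w_i^n = u_i^n\tilde{z}_i$ in $\M$ with $P_{\tilde z_i} = N_{\mu_i}\,P_{z_i}$, where $\mu_i = X^{-a_i}Y^{-b_i}$. Moving each $\tilde z_i$ past the $u_j^n$'s to its right conjugates it and introduces a monomial factor $\nu_i$ in its winding polynomial. The hypothesis $z\in\F'$ forces $\sum_i(a_i,b_i) = (0,0)$, so $\prod_i u_i^n \in \F'$ and
\[
z = \Bigl(\prod_i u_i^n\Bigr)\cdot\prod_i \hat z_i,\qquad P_{\hat z_i} = \nu_i N_{\mu_i} P_{z_i}.
\]

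Since $\hh(P) = \sum_j\varepsilon(j)R_j(P)$ depends only on the row-sums $R_j(P) := \sum_i P\{i,j\}$ (with $\varepsilon(j) \in \{\pm 1\}$ recording $c(j)$), I next check that $\hh(N_\mu Q) \equiv 0 \pmod n$ for any monomial $\mu = X^a Y^b$ and any $Q\in\ZZ$. Indeed, swapping the order of summation in $\hh(N_\mu Q) = \sum_j\varepsilon(j)\sum_{l=0}^{n-1}R_{j-lb}(Q)$ yields $\sum_k R_k(Q)\sum_{l=0}^{n-1}\varepsilon(k+lb)$, and the inner sum is a multiple of $n$ by Lemma \ref{facil2}(i) (including the degenerate case $b = 0$). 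Applied to each $\hat z_i$, this reduces the theorem to showing $\hh(\prod_i u_i^n) \equiv 0 \pmod n$.

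The main obstacle is this last claim, which I address via the closed lattice path associated to $\prod_i u_i^n$. If $H_k$ denotes the net horizontal motion at height $k$, then $R_j = \sum_{k\le j} H_k$, so with $T(k) := \sum_{j=0}^{k-1}\varepsilon(j)$ (periodic of period $n$, since the coloring is good) Abel summation gives $\hh(\prod_i u_i^n) = -\sum_k T(k)H_k$. A direct count of the horizontal edges contributed by each $u_i^n$, together with the $n$-periodicity of $T$, yields
\[
\hh\Bigl(\prod_i u_i^n\Bigr) = -\sum_{i=1}^r a_i\,S(b_i),\qquad S(b) := \sum_{j=0}^{n-1} T(jb\bmod n).
\]
It remains to prove $S(b)\equiv 0\pmod n$ for every $b\in\Z$. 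Setting $p(x) := \sum_{j=0}^{n-1}\varepsilon(j)x^j$ and $P(x) := \sum_{k=0}^{n-1}T(k)x^k$, a telescoping identity gives $(1-x)P(x) = xp(x)$, and the good-coloring relation $\varepsilon(j+n/2) = -\varepsilon(j)$ factors $p(x) = (1-x^{n/2})q(x)$ with $q(x) := \sum_{j=0}^{n/2-1}\varepsilon(j)x^j$; hence $P(x) = xq(x)(1+x+\cdots+x^{n/2-1})$. A roots-of-unity filter computes $S(b)$ as a sum of $P(\omega)$ over $d$-th roots of unity with $d = \gcd(b,n)$; since $n = 2^k$ forces $d\mid n/2$ whenever $d<n$, every such $\omega\ne 1$ satisfies $\omega^{n/2}=1$ and hence $P(\omega) = 0$, so only $\omega = 1$ contributes, giving $S(b) = \tfrac{n}{2}T(n/2)$ if $n\nmid b$ and $S(b) = 0$ otherwise. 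Since $T(n/2) = q(1)$ is a signed sum of $n/2 = 2^{k-1}$ values $\pm 1$ and $n/2$ is even for $k\ge 2$, $T(n/2)$ is even and hence $S(b)\equiv 0\pmod n$. This proves $\hh(z) = 0$.
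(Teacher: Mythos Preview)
Your proof is correct and takes a genuinely different route from the paper's.

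Both arguments share the same first reduction: since $\hh$ only sees row sums, any polynomial of the form $m'(1+m+\cdots+m^{n-1})$ has $\hh$-invariant zero by Lemma~\ref{facil2}(i). The paper uses this to reduce to a product $u^n v^n w^n$ with each factor of the explicit shape $x^a y^b$ or $y^b x^a$, and then analyses the resulting closed curve geometrically: the region it bounds differs from a large rectangle by three ``stair-like'' regions, and an intricate partition of each stair into rectangles of dyadic sizes shows the black--white count is divisible by $n$. Your argument instead keeps an arbitrary number of factors, splits off the ``skeleton'' $\prod_i(x^{a_i}y^{b_i})^n$ from a residual whose winding polynomial is a sum of norm-multiples, and handles the skeleton by pure algebra: the Abel-summation identity $\hh=-\sum_k T(k)H_k$ together with the factorisation $P(x)=xq(x)(1+x+\cdots+x^{n/2-1})$ and a roots-of-unity filter reduce everything to the parity of $q(1)=\sum_{j<n/2}\varepsilon(j)$, which is even precisely because $k\ge 2$.

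Your approach is shorter and more transparent for the horizontal invariant, and it makes the role of the hypothesis $k\ge 2$ completely explicit. The cost is that it leans on the row-sum structure of $\hh$, so it does not obviously adapt to the diagonal invariants of Theorem~\ref{muchosinvariantes} (where $\varphi(1,0)$ and $\varphi(0,1)$ are both odd) or to the odd-prime version in Theorem~\ref{muchosinvariantesp}; the paper's stair decomposition, by contrast, is the template that is reused and refined in those later proofs.
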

\begin{proof}
The proof is similar to that of Theorem 59 in \cite{wind}. Let $z\in \F' \cap \F^n \F''$. Since $\F'' =\ker W \leqslant \ker \hh$, we may assume $z\in \F' \cap \F^n$. Suppose then $z=u_1^nu_2^n\ldots u_r^n$ for some $u_i\in \F$.

Step 1: We may assume $r=3$. Indeed, we can find $v_2,v_3, \ldots, v_{r-2}\in \F$ such that $u_1^nu_2^nv_2^n$, $v_2^{-n}u_3^nv_3^n$, $v_3^{-n}u_4^nv_4^n, \ldots, v_{r-3}^{-n}u_{r-2}^nv_{r-2}^n$, $v_{r-2}^{-n}u_{r-1}^nu_{r}^n$ are all in $\F'$. Since $\hh:\F'\to \Z_n$ is a group homomorphism, we may then assume that $r=3$. Suppose then that $z=u^nv^nw^n$ for certain $u,v,w\in \F$.

Step 2: $\hh (z)$ depends only on the exponents of $x,y$ in $u,v,w$. If we replace $u$ by another element $\widetilde{u}\in \F$ with same exponents in $x$ and $y$, then $\widetilde{z}=\widetilde{u}^nv^nw^n$ satisfies that $z^{-1}\widetilde{z}$ is a conjugate of an element in $\F'$ which is product of two $n$th powers. Then by Proposition \ref{dospotencias}, $P_{z^{-1}\widetilde{z}}$ is a multiple of $1+m+m^2+\ldots+m^{n-1}$ for some monomial $m$. The same happens if we replace $v$ or $w$ by other elements with the same exponents. So, it suffices to check that for monomials $m=X^iY^j,m'=X^{i'}Y^{j'}$, the polynomial $P=m'(1+m+m^2+\ldots +m^{n-1})$ has invariant $\hh(P)=0$. By definition $\hh(P)$ is (modulo $n$) the number of black elements minus the number of white elements in the sequence $j', j'+j, j'+2j, \ldots , j'+(n-1)j \in \Z_n$. By Lemma \ref{facil2} this is $0$. 
% If $j \equiv 0 \mod (n)$ then all the monomials of $P$ correspond to squares of the same color, so $\hh(P)=0\in \Z_n$. Otherwise, we claim that the monomials of $P$ can be matched in such a way that the squares corresponding to matched monomials have opposite colors. It suffices to show that in the set $\{j',j'+j,j'+2j,\ldots, j'+(n-1)j\}$ the elements can be matched in such a way that the difference between matched numbers is an odd multiple of $n/2$. But this is easy to see. If $d<n$ is the greatest common divisor $(j,n)$, then $\frac{n}{2d}j=\frac{j}{d}\frac{n}{2}$ is an odd multiple of $n/2$ and the proof reduces to showing that $\{0,1,\ldots, n-1\}$ can be matched with matched elements of difference $n/2d$, which is trivial.

Step 3: The stairs. Let $(a,b)=(\exp (x,u),\exp (y,u))$, $(c,d)=(\exp(x,uv),\exp(y,uv)) \in \Z^2$. Let $R$ be the rectangle circumscribed about the triangle $T$ with vertices $(0,0),(a,b),$ $(c,d)$. By the previous step we can assume $u=x^ay^b$ or $u=y^bx^a$, $v=x^{c-a}y^{d-b}$ or $v=y^{d-b}x^{c-a}$ and $w=x^{-c}y^{-d}$ or $w=y^{-d}x^{-c}$. We choose $u,v,w$ in such a way that the curve $\gamma_{z}$ associated to $z=u^nv^nw^n$ lies outside $T$ (see Figure \ref{rectan}). We must show that the bounded region $T'$ determined by $\gamma_z$ has trivial horizontal invariant, because up to sign this is $\hh(z)$. Since the height of $R$ is a multiple of $n$, $\hh(R)=0\in \Z_n$. Therefore it suffices to prove that the region $R-T'$ has trivial invariant. Now, $R-T'$ consists of three stair-like regions and possibly a rectangular region.

\begin{figure}[h] 
\begin{center}
\includegraphics[scale=0.7]{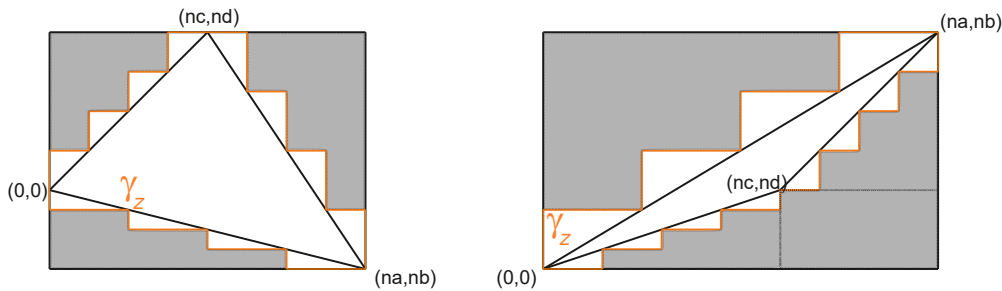}
\caption{The rectangle $R$ circumscribed about $T$. In this picture $n=4$. The curve $\gamma_z$ appears with orange and the region $R-T'$ is shaded.}\label{rectan}
\end{center}
\end{figure}    

Therefore the proof reduces to showing that in any rectangular region with sides divisible by $n$ and in stair-like regions with $n-1$ steps, the number of black squares minus the number of white squares is a multiple of $n$. For a rectangle we have the same argument we used for $R$.

Consider a stair-like region with $n-1$ steps as in Figure \ref{escalerita2}. We must prove that the number of black squares minus the number of white squares is a multiple of $n$, independently of the position of the stair in the integer lattice, the height and the length of the steps.  

\begin{figure}[h] 
\begin{center}
\includegraphics[scale=0.55]{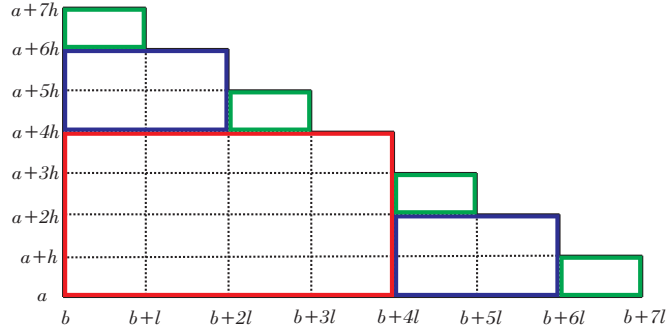}
\caption{A seven-step stair. Here $k=3$. The rectangle of type $0$ appears with red in the picture, those of type $1$ are blue, and those of type $2$ are green.}\label{escalerita2}
\end{center}
\end{figure}

Let $h,l \ge 1$ be the height and the length of the steps in the stair. We divide the stairs in rectangles as shown in Figure \ref{escalerita2}. There are $2^i$ rectangles of size $2^{k-i-1}l \times 2^{k-i-1}h$ for every $0\le i\le k-1$. We call the $2^{k-i-1}l \times 2^{k-i-1}h$ rectangles, rectangles of type $i$. We prove that for each $i$, among all the rectangles of type $i$, the number of black squares minus the number of white squares is a multiple of $2^k$. The first row of one rectangle of type $i$ and the first row of the next rectangle of that type are $2^{k-i}h$ rows apart. Therefore, if $2^i| h$, all the rectangles of type $i$ are painted exactly in the same way. Moreover, in this case each of the columns in these rectangles has height $2^{k-i-1}h$, which is a multiple of $2^{k-1}$, so in particular it is even. It is easy to see then, that in each column the number of black squares minus the number of white squares is even. But there are $2^{k-1}l$ of these columns, so the number of black squares minus the number of white squares among all the rectangles of type $i$ is a multiple of $2^k$. It remains to analyze the case that $2^i$ does not divide $h$. In this case not all the rectangles of type $i$ are painted in the same way. In fact if $2^r$ is the maximum power of $2$ dividing $h$ (so in particular $r\le i-1$) then the $j$th rectangle and the $(j+2^{i-r-1})$th rectangle of type $i$ are $2^{i-r-1}2^{k-i}h=2^{k-r-1}h$ rows apart. But $2^{k-r-1}h$ is a multiple of $2^{k-1}$ and not a multiple of $2^k$. This implies that the two rectangles are painted exactly with opposite colors. Thus the rectangles of type $i$ can be paired in such a way that paired rectangles are painted with opposite colors ($[2^i]$ can be matched with matched elements of difference $2^{i-r-1}$ by Lemma \ref{facil}). Therefore among rectangles of type $i$ there is the same number of black and white squares.    
\end{proof}

%en la dem anterior no se puede usar el lema y acortar, no?
%Recall that the first Engel word $e_1$ is defined as $[y,x]=yxy^{-1}x^{-1}\in \F$. Recursivelly, the $m$-th Engel word is defined by $e_m=[y, e_{m-1}]$.

In \cite{Str}, Struik proves that for any even positive integer $n$, $[x,y]^{\frac{n}{2}}\in \gamma_3(\F)\F^n$. This is not hard to prove using the area invariant (see Section \ref{final} below). This result is closely related to a result of Sanov \cite{San} and a conjecture by Bruck \cite{Bru}.

Our horizontal invariants can be used to prove that the result is false when we replace $\gamma_3(\F)$ by the smaller subgroup $\F''$.

\begin{prop} \label{unamas}
Let $n$ be a positive integer divisible by $4$. Then $[x,y]^\frac{n}{2} \notin \F'' \F^n$. In particular, $[x,y]^{\frac{n}{2}}$ is not a product of $n$th powers in $\F$.
\end{prop}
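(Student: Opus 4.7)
The plan is to invoke the horizontal invariants from Theorem \ref{omega}. Since $4 \mid n$, write $n = 2^k m$ with $m$ odd and $k\ge 2$. Any $n$th power satisfies $g^n = (g^m)^{2^k}$, so $\F^n \le \F^{2^k}$ and consequently $\F''\F^n \le \F''\F^{2^k}$. It therefore suffices to show $[x,y]^{n/2}\notin \F''\F^{2^k}$, which places the problem within the hypothesis of Theorem \ref{omega}.

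Next I would compute the winding invariant of $[x,y]^{n/2}$. Since $W:\F'\to\ZZ$ is a group homomorphism and $P_{[x,y]}=1$, the winding invariant of $[x,y]^{n/2}$ equals the constant Laurent polynomial $n/2$. Equivalently, the only square carrying a nonzero label is $(0,0)$, whose coefficient is $n/2$.

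To finish, pick any good coloring $c:\Z_{2^k}\to\{\textrm{black, white}\}$ with $c(0)=\textrm{black}$ (for instance, paint $0,1,\ldots,2^{k-1}-1$ black and the remaining residues white), and let $\hh:\F'\to\Z_{2^k}$ be the associated horizontal invariant. Since only the square $(0,0)$ contributes and it is black, the previous paragraph gives
\[
\hh([x,y]^{n/2}) \;=\; \frac{n}{2} \;=\; 2^{k-1}m \;\equiv\; 2^{k-1} \pmod{2^k},
\]
which is nonzero because $m$ is odd and $k\ge 2$. If $[x,y]^{n/2}$ belonged to $\F''\F^{2^k}$, Theorem \ref{omega} would force $\hh([x,y]^{n/2}) = 0$, contradicting the computation above.

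I do not expect any substantive obstacle: Theorem \ref{omega} does essentially all the work, and the construction of the curve associated to $[x,y]^{n/2}$ is transparent. The only point demanding a moment's thought is the reduction from an arbitrary $n$ divisible by $4$ to the dyadic case $n=2^k$, which is handled by the one-line inclusion $\F^n\le \F^{2^k}$ noted at the outset.
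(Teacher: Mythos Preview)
Your proof is correct and follows essentially the same route as the paper: reduce to the $2$-primary part via $\F^n\le \F^{2^k}$, compute $P_{[x,y]^{n/2}}=n/2$, and observe that the horizontal invariant equals $2^{k-1}\neq 0$ in $\Z_{2^k}$, contradicting Theorem~\ref{omega}. The only cosmetic difference is that you fix $c(0)=\textrm{black}$ to get a definite sign, whereas the paper allows any good coloring since $\pm 2^{k-1}$ coincide in $\Z_{2^k}$.
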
  
\begin{proof}
Suppose $n=2^kq$ for $q$ odd, $k\ge 2$. Let $c:\Z_{2^k} \to \{\textrm{black, white}\}$ be any good coloring of $\Z_{2^k}$. Let $\hh :\F' \to \Z_{2^k}$ be the horizontal invariant associated with $c$. Then $P_ {[x,y]^{\frac{n}{2}}}=\frac{n}{2} \in \ZZ$, so $\hh([x,y]^{\frac{n}{2}})=\frac{n}{2}=2^{k-1} \in \Z_{2^k}$. Since $\hh ([x,y]^{\frac{n}{2}})$ is non-trivial then by Theorem \ref{omega}, $[x,y]^{\frac{n}{2}}$ is not in $\F''\F^{2^k}$, and then not in $\F''\F^n$.
\end{proof}

\section{Engel and Morse identities}

Given a word $w$ in the free group $F=F(x_1,x_2,\ldots, x_s)$ and a group $G$, we say that $G$ satisfies the identity $w$ if for every homomorphism $\phi:F\to G$ we have $\phi(w)=1$. If $G$ is a quotient of a group $H$ and $G$ does not satisfy identity $w$, then neither does $H$. For instance, from Proposition \ref{unamas} we deduce that the restricted Burnside group $R(2,n)$ does not satisfy the identity $[x,y]^{\frac{n}{2}}$ for any $n$ divisible by $4$. A group is $m$-Engel if it satisfies the identity $e_m$. Note that $B(2,n)$ is $m$-Engel if and only if $e_m\in \F$ is a product of $n$th powers. It is known that $R(2,5)$ is $6$-Engel, so if $e_6$ is not a product of fifth powers, $B(2,5)$ is not finite. Of course, a nilpotent group of class $m$ is $m$-Engel. Conversely, Zorn's Theorem establishes that a finite $m$-Engel group is nilpotent. Not every Burnside group satisfies an Engel identity: $B(2,6)$ is not nilpotent and finite. It is an open problem whether every finitely generated $m$-Engel group is nilpotent (see \cite[14.70]{Kou}). Metabelian groups of prime power exponent are (finite and) nilpotent but if $n$ is not a prime power and $d \ge 2$, $M(d,n)$ is (finite and) not nilpotent (\cite{BHM}). By results of Bachmuth-Heilbronn-Mochizuki \cite{BHM} and of Dark-Newell \cite{DN} it is known that the class of $M(2,p^k)$ is $m=k(p-1)p^{k-1}$ and $M(2,p^k)$ does not satisfy the $(m-1)$-Engel identity. In order to prove that $e_{m-1}\neq 1 \in M(2,p^k)$, Bachmuth, Heilbronn and Mochizuki find a representation of $M(2,p^k)$ by $2\times 2$ matrices using the Magnus embedding. Since Engel identities are completely understood for metabelian groups of prime power exponent, our results are not new and we only exhibit examples that illustrate how to use the invariants introduced in the previous section. In the next section however, we will study basic commutator identities, which generalize Engel identities.

%However we will be able to obtain new results regarding the order of the largest class $l$ quotient $M(2,2^k)/\gamma_{l+1}(M(2,2^k))$ of $M(2,2^k)$ in some particular cases.  

%We recall that the order of $M(2,2^k)$ is not known in general.

%As an example of application of our methods we give a proof of the fact that $M(2,2^k)$ is not $(2^k+1)$-Engel. This is just an illustration  

 %It is believed that there are Burnside groups which are not $m$-Engel for any $m$ (see \cite{Abd}). Although $e_{n+1}$ is a product of $n$th powers for $n=2^k$ and $k=2$, we will see that this is no longer true for $k\ge 3$. 

Engel identities (or \textit{congruences} which are a variant for non-nilpotent groups) have been studied for Burnside (non-metabelian) groups. Here the problem becomes harder and there is a still open conjecture by Bruck (see \cite{Bru, GKS, GHMN, GN, San, Kra}).

Since our invariant $\hh$ is trivial in $\F''\F^n$, it can be used to detect non-trivial elements in $M(2,n)$ and to prove that certain identities do not hold in that group. Using Remark \ref{remark} it is easy to prove that the winding invariant of $e_m$ is $P_{e_m}=-(Y-1)^{m-1}$.

\begin{lema} \label{accion}
Let $k\ge 2$. Then $\binom{2^k}{2^{k-1}}-2$ is not a multiple of $8$.
\end{lema}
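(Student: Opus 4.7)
The natural first move is to prove the stronger statement that $\binom{2^k}{2^{k-1}} \equiv 6 \pmod{8}$ for every $k \geq 2$, since this immediately yields $\binom{2^k}{2^{k-1}} - 2 \equiv 4 \pmod{8}$, which is nonzero. I would attack this via the Chu--Vandermonde identity
$$\binom{2^k}{2^{k-1}} \;=\; \sum_{j=0}^{2^{k-1}} \binom{2^{k-1}}{j}^{\!2},$$
which reduces the question to an analysis of the $2$-adic valuations of the ``half-row'' binomials $\binom{2^{k-1}}{j}$ and a bookkeeping of their squares modulo $8$.

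For this I would invoke Kummer's theorem (equivalently Legendre's formula): for $0 \leq j \leq 2^{k-1}$,
$$v_2\!\left(\binom{2^{k-1}}{j}\right) \;=\; s_2(j) + s_2(2^{k-1} - j) - 1,$$
where $s_2$ denotes the binary digit sum. Since $\binom{2^{k-1}}{j}^2$ contributes to the sum modulo $8$ only when this valuation equals $0$ or $1$, I would classify the indices accordingly. The case $v_2 = 0$ (by Lucas' theorem) forces $j \in \{0, 2^{k-1}\}$, and each such term contributes $1 \pmod 8$. The case $v_2 = 1$ requires $s_2(j) = s_2(2^{k-1} - j) = 1$, so both $j$ and $2^{k-1} - j$ are powers of $2$ summing to $2^{k-1}$; the only solution is $j = 2^{k-2}$, whose squared binomial contributes $4 \pmod 8$. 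All other indices have $v_2 \geq 2$, so their squares are divisible by $16$ and drop out modulo $8$.

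Summing the surviving contributions gives $1 + 4 + 1 \equiv 6 \pmod 8$, as desired. I do not anticipate any serious obstacle: the argument amounts to the Chu--Vandermonde identity plus a very short valuation computation. The only mildly delicate point is the uniqueness of $j = 2^{k-2}$ among the $v_2 = 1$ indices, but this follows at once from the elementary observation that two positive powers of $2$ sum to a power of $2$ only if the two are equal.
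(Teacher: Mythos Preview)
Your proof is correct and establishes the same sharper congruence $\binom{2^k}{2^{k-1}}\equiv 6\pmod 8$ as the paper, but by a genuinely different route. The paper argues combinatorially: it lets $\Z_{2^k}$ act by cyclic shifts on the set of $2^{k-1}$-element subsets of $\Z_{2^k}$, observes that every orbit has size a power of $2$, and then checks by hand that there is no orbit of size $1$, exactly one orbit of size $2$ (the even and odd residues), exactly one orbit of size $4$ (the orbit of $\{j:j\equiv 0,1\!\pmod 4\}$), and all remaining orbits have size divisible by $8$. Summing orbit sizes gives $\binom{2^k}{2^{k-1}}\equiv 2+4\equiv 6\pmod 8$.

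Your approach via Chu--Vandermonde and Kummer's theorem is just as clean, and arguably more self-contained for a reader comfortable with $p$-adic valuations. The paper's orbit-counting method, on the other hand, is reused verbatim later (in the proof of Proposition~\ref{basiccom}) to show that $\binom{n}{n/2-1}\equiv 0\pmod n$ and that $4\mid\binom{n}{r}$ for $r\neq 0,\tfrac{n}{2},n$; so within the paper it has the virtue of setting up a technique that pays off again. Your Kummer-based argument would handle those facts equally well, but would need to be restated each time.
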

\begin{proof}
There is an action of the cyclic group $\Z_{2^k}$ on the set $\binom{\Z_{2^k}}{2^{k-1}}$ of subsets of $\Z_{2^k}$ of cardinality $2^{k-1}$, induced by the natural action of $\Z_{2^k}$ on itself. The orbits of this action have order a power of $2$. There is no orbit of cardinality $1$. There is exactly one orbit of cardinality $2$, given by the subset of even numbers in $\Z_{2^k}$ and the subset of odd numbers. There is exactly one orbit of cardinality $4$, it is the orbit of the subset of numbers congruent to $0$ and $1$ modulo $4$ in $\Z_{2^k}$. All the other orbits have cardinality divisible by $8$. Thus, $\binom{2^k}{2^{k-1}}\equiv 6 \mod 8$. 
\end{proof}

\begin{ej} \label{ocho}
Let $n=2^k$ for some $k\ge 3$. Then the $(n+1)$-Engel word $e_{n+1}$ is not a product of $n$th powers. Moreover, $M(2,n)$ is not $(n+1)$-Engel. This follows immediately from Bachmuth, Heilbronn and Mochizuki results: $M(2,n)$ is not $(m-1)$-Engel for $m=k2^{k-1}$, and $m-1\ge n+1$. However we give a short proof of this using our invariant $\hh$. Moreover, the advantage of our approach is that in Proposition \ref{basiccom} we will be able to extend this example to all the basic commutators of weight $\le n+2$, using the notion of \textit{diagonal invariant}.

The winding invariant of $e_{n+1}$ is $-(Y-1)^n$. Take the coloring $\Z_n\to \{\textrm{black,white}\}$ which paints $0,1, \ldots, n/2-1$ with black and $n/2, n/2+1, \ldots, n-1$ with white. We will prove that $\hh(e_{n+1}) \in \Z_n$ is not a multiple of $8$. By definition $\hh(e_{n+1})=\sum\limits_{i=0}^{n/2-1} (-1)^{i+1}\binom{n}{i}-\sum\limits_{i=n/2}^{n-1} (-1)^{i+1}\binom{n}{i}-1$. Since $\binom{n}{i}=\binom{n}{n-i}$ for every $i$, most of the terms in this expression cancel and we obtain $\hh(e_{n+1})=\binom{n}{n/2}-2$. The result follows then from Lemma \ref{accion} and Theorem \ref{omega}.
\end{ej}

\begin{ej} \label{ej25}
$M(2,16)$ is not a $25$-Engel group. In fact Bachmuth-Heilbronn-Mochizuki's result says that it is not $31$-Engel. Our proof is easy by considering the invariant $\hh$ as above. We only have to check that $\hh(e_{25})\neq 0$. The winding invariant of $e_{25}$ is $-(Y-1)^{24}=\sum\limits_{i=0}^{24} (-1)^{i+1}\binom{24}{i}Y^i$. Modulo $16$ the coefficients of this polynomial are: $-1,8,-4,8,$ $-2,8,$ $-4,8,$ $1,0,$ $8,0,$ $4,0,8,0,1,8,-4,8,-2,8,-4,8,-1$. Thus, $\hh(e_{25})=(-1+8-4+8-2+8-4+8)-(1+0+8+0+4+0+8+0)+(1+8-4+8-2+8-4+8)-(-1)=5-5+7+1=8\in \Z_{16}$.
\end{ej}

%For $n=8$, for instance, this says that $e_9$ is not a product of $8$-th powers. In fact it is known that $e_{11}$ is not a product of $8$-th powers \cite{}. Our result says not only that $B(2,2^k)$ does not satisfy the $(2^k+1)$-Engel identity, but $M(2,2^k)$ is not $(2^k+1)$-Engel either.

%For $n=2^k$ there are $n$ invariants $\hh=\hh^0, \hh^1, \ldots, \hh^{n/2-1}, \vv^0, \vv^1, \ldots, \vv^{n/2-1}: \F' \to \Z_n$ defined in a similar way as $\hh$, only that we paint the squares of the integer lattice in a different way, by shifting the original coloring used for $\hh$ and by replacing rows by columns. Concretely, for $z\in \F'$ and $0\le i\le n/2$ we define $\hh^i(z)=\hh(y^izy^{-i})=\hh(Y^iP_z)$, $\vv(z)=\hh(P_z(Y,X))$ and $\vv^i(z)=\vv(X^iP_z(Y,X))$. In other words, $\hh^i$ is defined in the same way as $\hh$ only that we paint the squares in rows congruent to $-i,1-i,2-i,\ldots, n/2-i-1$ modulo $n$ with black and the remaining rows with white. $\vv^i$ is defined identically but painting columns instead of rows. Since a conjugate of a product of $n$-powers is a again a product of $n$-powers we deduce from Theorem \ref{omega}.
%
%\begin{coro} \label{engel}
%Let $n=2^k$ for some $k\ge 2$. If $z\in \F'$ is a product of $n$-powers, then the $n$ horizontal and vertical invariants $\hh^i(z)$ and $\vv^i(z)$ are trivial.
%\end{coro}

We define the following $n/2$ colorings of $\Z_n$, $c_0,c_1,\ldots, c_{n/2-1}:\Z_n \to \{\textrm{black, white}\}$. The coloring $c_i$ paints $i,i+1,\ldots, i+n/2-1$ with black and the remaining elements with white. The associated horizontal invariants are $\hh^0$, $\hh^1, \ldots, \hh^{n/2-1}$. Similarly we have the vertical invariants $\vv^0$, $\vv^1,\ldots , \vv^{n/2-1}$. The invariant $\vv^i$ is associated to the coloring of the squares in the grid which paints the columns congruent to $i,i+1,\ldots, i+n/2-1$ with black and the remaining columns with white. That is $\vv^i(z)=\hh^i(P_z(Y,X))$. By Theorem \ref{omega}, the map $\Omega: \F' \to (\Z_n)^n=\Z_n \times \ldots \times \Z_n$ defined by $$\Omega(z)=(\hh^0(z), \hh^1(z), \ldots, \hh^{n/2-1}(z), \vv^0(z), \vv^1(z), \ldots, \vv^{n/2-1}(z))$$ is trivial in $\F''\F^n$ so it induces a well-defined homomorphism $\Omega : M(2,n) \to (\Z_n)^n$.

\begin{lema} \label{gama}
$\Omega$ is trivial in $\gamma_{j+1}(\F)$ if and only if $\Omega(e_j)=0$.
\end{lema}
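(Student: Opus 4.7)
The plan is to translate the problem to the polynomial side via the winding invariant and exploit two symmetries of $\Omega$: invariance under grid translation, and the $X\leftrightarrow Y$ symmetry built into using the same colorings $c_0,\ldots,c_{n/2-1}$ for both the horizontal and the vertical invariants.

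The direction ``$\Omega|_{\gamma_{j+1}(\F)}=0 \Rightarrow \Omega(e_j)=0$'' is immediate since $e_j\in\gamma_{j+1}(\F)$. For the converse I would pass to $\ZZ$: since $\ker W=\F''\leqslant \ker\Omega$, $\Omega$ factors as $\bar\Omega\circ W$ for some $\bar\Omega:\ZZ\to(\Z_n)^n$, and iterating Remark \ref{remark} gives both $W(\gamma_{j+1}(\F))=I^{j-1}$, where $I=(X-1,Y-1)$ is the augmentation ideal of $\ZZ$, and $P_{e_j}=-(Y-1)^{j-1}$. Since $I^{j-1}$ is generated as a $\ZZ$-ideal by the products $(X-1)^a(Y-1)^b$ with $a+b=j-1$, the task becomes: assuming $\bar\Omega((Y-1)^{j-1})=0$, show that $\bar\Omega$ vanishes on every such product.

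To this end I would establish two structural facts about $\bar\Omega$. First, $\ker\bar\Omega$ is a $\ZZ$-ideal: translating a polynomial $P$ by a monomial $X^aY^b$ shifts the coloring indices, and a shift by $n/2$ inverts a good coloring, so each coordinate of $\bar\Omega(X^aY^bP)$ equals $\pm$ a coordinate of $\bar\Omega(P)$; combining this signed permutation action with $\Z$-linearity shows that $\ker\bar\Omega$ is closed under multiplication by all of $\ZZ$. Second, the identity $\hh^i(P)=\sum_l \sigma_i(l)\,[Y^l]P(1,Y)$ (with $\sigma_i=\pm 1$ encoding $c_i$) shows that $\hh^i(P)$ depends only on $P(1,Y)$, and symmetrically $\vv^i(P)$ only on $P(X,1)$.

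With these in hand the conclusion assembles cleanly. For $a,b>0$ both $P(1,Y)$ and $P(X,1)$ vanish on $(X-1)^a(Y-1)^b$, so $\bar\Omega$ kills the ``mixed'' generators of $I^{j-1}$ automatically. The two ``pure'' generators $(Y-1)^{j-1}$ and $(X-1)^{j-1}$ are linked by the $X\leftrightarrow Y$ symmetry: swapping the variables exchanges the $\hh$- and $\vv$-blocks of $\bar\Omega$, yielding $\hh^i((Y-1)^{j-1})=\vv^i((X-1)^{j-1})$, so $\bar\Omega((X-1)^{j-1})=0$ iff $\bar\Omega((Y-1)^{j-1})=0$. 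The hypothesis $\Omega(e_j)=0$ therefore kills both pure generators, and the ideal property propagates the vanishing to all of $I^{j-1}$, as required. The step I expect to require most care is the first structural fact---verifying that monomial translation truly acts by a signed permutation of coordinates, using the $c(a+n/2)\neq c(a)$ condition on good colorings---since this is what upgrades $\ker\bar\Omega$ from a mere $\Z$-submodule to a full $\ZZ$-ideal, the load-bearing observation in the proof.
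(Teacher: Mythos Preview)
Your proof is correct and follows essentially the same approach as the paper: both arguments reduce to the polynomial side, use that multiplication by a monomial acts on $\Omega$ as a signed permutation of coordinates (so $\ker\bar\Omega$ is an ideal), use that $\hh^i(P)$ depends only on $P(1,Y)$, and invoke the $X\leftrightarrow Y$ symmetry to pass from $(Y-1)^{j-1}$ to $(X-1)^{j-1}$. The only cosmetic difference is that you work with the standard ideal generators $(X-1)^a(Y-1)^b$ of $I^{j-1}$, whereas the paper works directly with the products $(m_1-1)\cdots(m_{j-1}-1)$ and then specializes each $m_i$ to a power of $Y$; the underlying ideas are identical.
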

\begin{proof}
Since $e_j\in \gamma_{j+1}(\F)$, one implication is trivial. Suppose now that $\Omega(e_j)=0$. Then by symmetry $\Omega (e_j(y,x))=0$. Let $z\in \gamma_{j+1}(\F)$. Using Remark \ref{remark} it is easy to see that $P_z$ is a sum of multiples of polynomials of the form $(m_1-1)(m_2-1)\ldots (m_{j-1}-1)$ where the $m_i$ are monomials (see also \cite[Proposition 23]{wind}). If $P \in \ZZ$ and $m$ is a monomial, $\Omega (P)$ and $\Omega (m P)$ have the same coordinates up to a shift and change of sign.  It suffices to prove then that for a polynomial $P=(m_1-1)(m_2-1)\ldots (m_{j-1}-1)$, where the $m_i$ are monomials, $\Omega$ vanishes. Now, we can consider only the invariants $\hh^0, \hh^1, \ldots, \hh^{n/2-1}$ since for the $\vv^i$ the proof is symmetrical. Since the colors of the squares in the integer lattice used to compute $\hh^i$ depend only on the row, we may also assume that each monomial $m_i$ is a power of $Y$. Also, since $Y-1$ divides $Y^l-1$ for every $l\in \Z$, $P$ is a multiple of $(Y-1)^{j-1}=-P_{e_j}$, so as $\Omega$ vanishes at $e_j$, it also vanishes at $z$.
\end{proof}

%\begin{obs}
%Let $j\ge 1$ such that $\Omega(e_j)\neq 0$. Then $M(2,n)$ does not satisfy the $j$th Engel congruence.
%
%It is known that $B(2,n)/B(2,n)''$ is nilpotent. In fact its nilpotency class is $m=k2^{k-1}$. Thus $e_{m-1}\in \F$ is in the normal subgroup generated by $\F''$ and the $n$th powers. Then $\Omega (e_{m-1})=0$. Let $j_0$ be the maximum integer such that $\Omega (e_{j_0})\neq 0$. We have then $j\le j_0$. $B(2,n)$ does not satisfy the $j_0$th Engel congruence. Indeed, since $\Omega(e_{j_0+1}=0)$, by Lemma \ref{gama} $\Omega(\gamma_{j_0+2}(\F))=0$, so $e_{j_0}\notin \gamma_{j_0+2}(B(2,n))$ by Corollary \ref{engel}. Since $j\le j_0$, $B(2,n)$ doesn't satisfy the $j$th Engel congruence either.
%\end{obs}

We consider the map $\overline{\Omega}:\F'\to \Z_{n/2} \times (\Z_n)^n$ defined by $\overline{\Omega}(z)=(A(z), \Omega(z))$. Recall that $A$ denotes the area invariant $A:\F' \to \Z_{n/2}$ defined by $A(z)=P_z(1,1)$, which is trivial in $\F''\F^n$ by Theorem 59 of \cite{wind}. 

\begin{prop} \label{orderimage}
The image $\textrm{Im}(\Omega)$ of $\Omega :\F' \to (\Z_n)^n$ is a subgroup of order $2(n/2)^n$ and $\textrm{Im}(\overline{\Omega})$ has order $(n/2)^{n+1}$.
\end{prop}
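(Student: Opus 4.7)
Write $\vec{r}_k = (\epsilon_0(k), \ldots, \epsilon_{n/2-1}(k)) \in (\Z_n)^{n/2}$, so that $\Omega(X^aY^b) = (\vec{r}_b, \vec{r}_a)$, and observe $\vec{r}_{k+n/2} = -\vec{r}_k$ since each $c_i$ is good. Since $\epsilon_i(k) \equiv 1 \pmod 2$, the definition yields $\hh^i(P) \equiv \vv^j(P) \equiv A(P) \pmod 2$ for every $P$. Therefore $\textrm{Im}(\Omega) \subseteq S := \{v \in (\Z_n)^n : v_i \equiv v_j \pmod 2 \text{ for all } i, j\}$, of order $2(n/2)^n$, and $\textrm{Im}(\overline{\Omega}) \subseteq T := \{(a,v) \in \Z_{n/2} \times (\Z_n)^n : a \equiv v_i \pmod 2 \text{ for all } i\}$, of order $(n/2)^{n+1}$. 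The plan is to show both inclusions are equalities.

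For $\textrm{Im}(\Omega) = S$, I produce enough images to generate $S$. A direct computation gives $\Omega(Y^i - Y^{i-1}) = (\vec{r}_i - \vec{r}_{i-1}, 0) = (2e_i, 0)$ for $i = 1, \ldots, n/2-1$, and $\Omega(1 - Y^{-1}) = (\vec{r}_0 + \vec{r}_{n/2-1}, 0) = (2e_0, 0)$ using $\vec{r}_{-1} = -\vec{r}_{n/2-1}$. The symmetric computation with $X^i - X^{i-1}$ in place of $Y^i - Y^{i-1}$ puts $(0, 2e_j)$ in $\textrm{Im}(\Omega)$ for every $j$. Together these generate $(2\Z_n)^n$, the parity-$0$ subgroup of $S$. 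Finally $\Omega([x,y]) = (\vec{r}_0, \vec{r}_0)$ has every entry in $\{\pm 1\}$, hence of odd parity, and its coset fills the other half of $S$.

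For $\textrm{Im}(\overline{\Omega}) = T$, the projection $\textrm{Im}(\overline{\Omega}) \to \textrm{Im}(\Omega) = S$ onto the second factor is surjective with fibers equal to cosets of $A(\ker \Omega) \subseteq \Z_{n/2}$. The parity relation forces $A(\ker\Omega) \subseteq 2\Z_{n/2}$, a cyclic subgroup of order $n/4$; to obtain equality it suffices to exhibit a single $P \in \ker \Omega$ with $A(P) = 2$. I propose the Laurent polynomial
\[ P = (n-2) + Y + \bigl(1-\tfrac{n}{2}\bigr)Y^{n/2+1} + X + \bigl(1-\tfrac{n}{2}\bigr)X^{n/2+1}. \]
A routine tally of row and column sums gives $C_0 = C_1 = C'_0 = C'_1 = n/2$ with all remaining $C_k$ and $C'_j$ equal to zero, so that $\hh^i(P) = \tfrac{n}{2}(\epsilon_i(0) + \epsilon_i(1)) \equiv 0 \pmod n$ for every $i$, and, by the $X \leftrightarrow Y$ symmetry of $P$, also $\vv^j(P) \equiv 0 \pmod n$ for every $j$, while $A(P) = 2$. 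Hence $|\textrm{Im}(\overline{\Omega})| = |S| \cdot |A(\ker \Omega)| = 2(n/2)^n \cdot (n/4) = (n/2)^{n+1}$. The main obstacle lies in this last explicit construction of a kernel element with non-trivial area; the polynomial above is engineered so that only the first two row and column differences are non-zero, both equal to $n/2$, which cancel modulo $n$ against the coefficients $\epsilon_i(0) + \epsilon_i(1) \in \{-2, 0, 2\}$.
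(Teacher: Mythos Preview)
Your argument follows the same route as the paper's: the parity constraint gives the upper bound, the elements with winding invariant $Y^i-Y^{i-1}$ (which are exactly the paper's conjugates $y^izy^{-i}$ of the element with $P_z=1-Y^{-1}$) together with $P=1$ fill out $S$, and for $\overline{\Omega}$ one exhibits an element of $\ker\Omega$ with area $2$.

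One correction: your stated row sums for the five-term polynomial $P$ are not literally right. The actual row sums are $C_0=n/2$, $C_1=1$, $C_{n/2+1}=1-n/2$, with the rest zero (and symmetrically for the column sums). Your formula $\hh^i(P)=\tfrac{n}{2}\bigl(\epsilon_i(0)+\epsilon_i(1)\bigr)$ is nonetheless correct, because $\epsilon_i(n/2+1)=-\epsilon_i(1)$ folds the row-$(n/2{+}1)$ contribution back into row $1$, giving effective weight $C_1-C_{n/2+1}=n/2$ there. So the proof stands, but that sentence should be reworded.

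For comparison, the paper's kernel element is much simpler: $P=1+X^{n/2}Y^{n/2}$. Then $\hh^i(P)=\epsilon_i(0)+\epsilon_i(n/2)=0$ directly from the good-coloring relation, likewise $\vv^j(P)=0$, and $A(P)=2$. This two-term choice makes the verification immediate and avoids the engineered cancellation you describe at the end.
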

\begin{proof}
Let $z\in \F'$ be an element with winding invariant $P_z=1-Y^{-1}$. It is easy to see that $\Omega (z)=(2,0,0, \ldots, 0) \in \textrm{Im}(\Omega)$. For $1\le i\le n/2-1$, the elements $y^{i}zy^{-i}$ give permutations of the coordinates of this vector with the non-trivial coordinate in any of the first $n/2$ positions. Changing $Y$ by $X$ we obtain the remaining permutations of the coordinates. This already proves that $\textrm{Im}(\Omega)$ contains a subgroup $D$ of order $(n/2)^n$. The element $z=1$ has $\Omega(z) \notin D$. On the other hand it is easy to see that for any $z\in \F'$, all the coordinates of $\Omega (z)$ have the same parity, so $|D|<|\textrm{Im}(\Omega)|\le 2(n/2)^n$ and the result follows. 

The area of an element $z\in \F'$ with winding invariant $P_z=1+X^{n/2}Y^{n/2}$ is $2$, while $\Omega(z)=0$. Thus, $|\textrm{Im}(\overline{\Omega})|\ge 2(n/2)^{n}n/4=(n/2)^{n+1}$. The parity of the area is also determined by the value of $\Omega$, so the other inequality holds as well. 
\end{proof}

Recall that $M(2,16)$ is nilpotent of class $m=32$. In Example \ref{ej25}, we proved that $\hh^0(e_{25})\neq 0 \in \Z_{16}$. It is easy to prove that $\hh^0(e_{26})=0$ and moreover that in fact $\Omega(e_{26})=0$. By Lemma \ref{gama}, $\Omega$ is trivial in $\gamma_{27}(\F)$. The area invariant is already trivial in $\gamma_3(\F)$. Thus $\overline{\Omega}$ induces a map $\overline{\Omega}: M(2,16)'/\gamma_{27}(M(2,16))\to \Z_{8} \times (\Z_{16})^{16}$ and from Proposition \ref{orderimage} we deduce that $|\frac{M(2,16)'}{\gamma_{27}(M(2,16))}|\ge 2^{51}$. On the other hand $|\frac{M(2,16)}{M(2,16)'}|=16^2=2^8$. Therefore we deduce the following 

\begin{prop} \label{p16}
The order $|M(2,16)/\gamma_{27}(M(2,16))|$ of the largest class $26$ quotient of $M(2,16)$ is greater than or equal to $2^{59}$.
\end{prop}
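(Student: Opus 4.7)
The plan is to stack up two orders via the short exact sequence
$$1\to M(2,16)'/\gamma_{27}(M(2,16)) \to M(2,16)/\gamma_{27}(M(2,16)) \to M(2,16)/M(2,16)' \to 1.$$
The quotient on the right is the abelianization of a two-generator group of exponent $16$, so it has order $16^2 = 2^8$. Thus it suffices to show $|M(2,16)'/\gamma_{27}(M(2,16))|\ge 2^{51}$. For this I would use the homomorphism $\overline{\Omega}=(A,\Omega):\F' \to \Z_8\times (\Z_{16})^{16}$ and show that it descends to $M(2,16)'/\gamma_{27}(M(2,16))$; then by Proposition \ref{orderimage} its image already as a subgroup of $\Z_8\times (\Z_{16})^{16}$ has order $(n/2)^{n+1}=8^{17}=2^{51}$, which forces the target of the descended map to have at least this order.

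Two vanishing conditions are needed for $\overline{\Omega}$ to descend. Vanishing on $\F'\cap \F^{16}\F''$ is given by Theorem \ref{omega} for the $\Omega$ coordinate and by Theorem 59 of \cite{wind} for the area coordinate. Vanishing on $\gamma_{27}(\F)$ requires more care: the area invariant is already trivial on $\gamma_3(\F)$ (on a commutator $[u,v]$ with $v\in \F'$, $P_{[u,v]}$ is a multiple of $m_u-1$ by Remark \ref{remark}, and $m_u-1$ evaluates to $0$ at $(1,1)$), and by Lemma \ref{gama} the vanishing of $\Omega$ on $\gamma_{27}(\F)$ reduces to the single identity $\Omega(e_{26})=0$.

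The only computational content is thus the verification that $\Omega(e_{26})=0\in (\Z_{16})^{16}$. Using Remark \ref{remark} (or inductively on $m$), the winding invariant is $P_{e_{26}}=-(Y-1)^{25}$, supported entirely in the single column $X^0$. The eight vertical components $\vv^i(e_{26})$ therefore equal $\pm P_{e_{26}}(1,1)=\mp(1-1)^{25}=0$ immediately. For the horizontal components $\hh^i$ with $i=0,\ldots,7$, I would group the coefficients $(-1)^j\binom{25}{j}$ by their residue $j\bmod 16$ and weight them by the sign $\pm 1$ determined by the coloring $c_i$; combining Pascal's symmetry $\binom{25}{j}=\binom{25}{25-j}$ with the fact that rows $k$ and $k+8$ always receive opposite colors under any good coloring of $\Z_{16}$ yields enough cancellation to kill each $\hh^i(e_{26})$ modulo $16$. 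This binomial bookkeeping is the main (and only) non-trivial step; once it is in place, the product $2^{51}\cdot 2^8=2^{59}$ closes the argument.
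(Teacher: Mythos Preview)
Your approach is essentially identical to the paper's: both factor through the abelianization (contributing $2^8$), show that $\overline{\Omega}$ descends to $M(2,16)'/\gamma_{27}(M(2,16))$ by invoking Theorem~\ref{omega}, Theorem~59 of \cite{wind}, and Lemma~\ref{gama}, and then read off $|\mathrm{Im}(\overline{\Omega})|=8^{17}=2^{51}$ from Proposition~\ref{orderimage}. The paper likewise reduces everything to the single check $\Omega(e_{26})=0$, which it simply asserts is ``easy to prove.''

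One small caution on your sketch of that check: the Pascal symmetry $\binom{25}{j}=\binom{25}{25-j}$ pairs row $j$ with row $25-j\equiv 9-j \pmod{16}$, which is \emph{not} the same as the good-coloring pairing $j\leftrightarrow j+8$, so the two symmetries do not combine as cleanly as your wording suggests. The verification is nonetheless routine---for instance, one can use $P_{e_{26}}=(Y-1)P_{e_{25}}$ to get $\hh^i(e_{26})=\hh^{i-1}(e_{25})-\hh^i(e_{25})$ and then check from the coefficient list in Example~\ref{ej25} that all eight values $\hh^i(e_{25})$ coincide (equal to $8$) modulo $16$; or simply compute each $\hh^i(e_{26})$ directly. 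Either way the step goes through, and the paper does not spell it out either.
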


A similar result can be obtained for $n=8$. Here $\Omega(e_{10})=0\in \Z_8$, so $\overline{\Omega}$ is trivial in $\gamma_{11}(\F)$ and we deduce from Proposition \ref{orderimage}:

\begin{obs}
The order of $M(2,8)/\gamma_{11}(M(2,8))$ is greater than or equal to $2^{24}$.
\end{obs}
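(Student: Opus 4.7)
The plan is to mirror the proof of Proposition \ref{p16} with $n=8$ in place of $n=16$. The first task is to verify the claim stated just before the remark, namely $\Omega(e_{10})=0\in (\Z_8)^8$. The winding invariant is $P_{e_{10}}=-(Y-1)^9$, a Laurent polynomial supported on the single column $X^0$ with coefficients $(-1)^i\binom{9}{i}$ for $i=0,1,\ldots,9$. Because all of its mass lies in column $0$, each vertical invariant $\vv^j(e_{10})$ is automatically $\pm P_{e_{10}}(1,1)=0$. For the four horizontal invariants $\hh^0,\hh^1,\hh^2,\hh^3$ I would, exactly as in Examples \ref{ocho} and \ref{ej25}, reduce the ten coefficients modulo $8$, partition the rows $0,1,\ldots,9$ into the two color classes induced by each $c_j$, and check case by case that the signed sum is divisible by $8$. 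This is the only genuine computation in the plan, and I expect it to be the main (though short) obstacle.

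Once this is done, Lemma \ref{gama} immediately gives $\Omega(\gamma_{11}(\F))=0$. For the area component of $\overline{\Omega}$, I would note that Remark \ref{remark} implies $P_{[u,z]}(1,1)=(X^iY^j-1)|_{X=Y=1}\cdot P_z(1,1)=0$ for any $u\in \F$ and $z\in \F'$, so $A$ vanishes on $\gamma_3(\F)$ and in particular on $\gamma_{11}(\F)\subseteq \gamma_3(\F)$. Combining these two observations, and recalling that $\overline{\Omega}$ is already known to be trivial on $\F''\F^8$, the map $\overline{\Omega}:\F'\to \Z_4\times (\Z_8)^8$ descends to a well-defined homomorphism
\[
\overline{\Omega}:\frac{M(2,8)'}{\gamma_{11}(M(2,8))}\longrightarrow \Z_4\times (\Z_8)^8.
\]

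Finally, I would apply Proposition \ref{orderimage} with $n=8$ to conclude that the image of this homomorphism has order $(n/2)^{n+1}=4^9=2^{18}$, whence $|M(2,8)'/\gamma_{11}(M(2,8))|\ge 2^{18}$. Multiplying by $|M(2,8)/M(2,8)'|=8^2=2^6$ gives the desired bound $|M(2,8)/\gamma_{11}(M(2,8))|\ge 2^{24}$. Everything after the initial verification of $\Omega(e_{10})=0$ is a direct transcription of the $n=16$ argument, so the write-up should fit comfortably in the single paragraph that the paper allots to this remark.
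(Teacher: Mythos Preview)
Your proposal is correct and follows exactly the paper's approach, which simply says ``A similar result can be obtained for $n=8$: here $\Omega(e_{10})=0$, so $\overline{\Omega}$ is trivial on $\gamma_{11}(\F)$ and we deduce from Proposition \ref{orderimage}''. Your explicit observation that the vertical invariants vanish automatically because $P_{e_{10}}$ is supported in a single column is a nice shortcut for half of the verification, and the remaining horizontal checks go through as you describe.
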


In fact something stronger is already known. $M(2,8)$ has nilpotency class $12$ and its order has been computed by Hermanns \cite{Her}: $|M(2,8)|=2^{63}$. Skopin computed in \cite{Sko85} the orders of all the factors $\gamma_l(M(2,8))/\gamma_{l+1}(M(2,8))$, from which it can be seen that $M(2,8)/ \gamma_{11}(M(2,8))$ has order $2^{54}$. As far as we know the order of $M(2,16)$ has not been determined, but $2^{87} \le |M(2,16)|\le 2^{376}$ (see \cite{New} and next section). A GAP \cite{GAP} program using the package NQ \cite{NQ} returns after 120 hours that $|M(2,16)/\gamma_{17} (M(2,16))|=2^{185}$, which is of course much better than our result in Proposition \ref{p16}, and improves the lower bound given above. The computation that $|M(2,16)/\gamma_{9}(M(2,16))|=2^{70}$ takes seconds. 

\medskip

For $n=4$, Proposition \ref{orderimage} implies that $|M(2,4)|\ge 2^9$. In fact $|M(2,4)|=2^{10}$, so $\Omega$ (and $\overline{\Omega}$) is very close to a complete invariant that could be used to solve the word problem in $M(2,4)$ in an explicit way. In the next section we will introduce invariants more general that the horizontal and vertical invariants, though based on the same coloring ideas, and we will use them to obtain a complete invariant for $M(2,4)$.

\medskip
  
A \textit{semigroup identity} is an identity of the form $uv^{-1}$, where $u,v\in F(x_1,x_2,\ldots, x_s)$ are positive words (no letter appears with negative exponent). The Burnside identity $x^n$ is an example of this type. Another is the ($m$th) \textit{Morse identity} $u_mv_m^{-1}\in \F$, where $u_1=x$, $v_1=y$ and $u_{i+1}=u_iv_i$, $v_{i+1}=v_iu_i$ for every $i\ge 1$. In some sense Burnside identities and Morse identities generate all the semigroup identities (see \cite[Theorem 5.7]{Sha}). Finite groups satisfying a Morse identity are completely characterized by a theorem of Boffa and Point \cite[Corollaire]{BP}: They are the extensions of a nilpotent group by a 2-group. The Morse identities are related to the original proof of Novikov and Adian that there are infinite Burnside groups (see \cite[Section 6]{All}).

It is easy to see that the total exponents of $x$ and of $y$ in both $u_m$ and $v_m$ is $2^{m-2}$ if $m\ge 2$. Let $m\ge 3$. Since $u_{m}v_{m}^{-1}=u_{m-1}v_{m-1}u_{m-1}^{-1}v_{m-1}^{-1}$ and $v_{m-1}u_{m-1}^{-1}\in \F'$, then the winding invariant $P_{u_{m}v_{m}^{-1}}=P_{u_{m-1}v_{m-1}^{-1}}+X^{2^{m-3}}Y^{2^{m-3}}P_{v_{m-1}u_{m-1}^{-1}}=(1-X^{2^{m-3}}Y^{2^{m-3}})P_{u_{m-1}v_{m-1}^{-1}}$. By induction we obtain then $$P_{u_mv_m^{-1}}=(1-X^{2^{m-3}}Y^{2^{m-3}})(1-X^{2^{m-4}}Y^{2^{m-4}})\ldots (1-XY).$$

\begin{prop} \label{morse}
Let $k\ge 2$, $n=2^k$. Then $M(2,n)$ satisfies the $(k+3)$th Morse identity but it does not satisfy the $(k+1)$th Morse identity. In particular, the restricted Burnside group $R(2,n)$ does not satisfy the $(k+1)$th Morse identity, and neither does $B(2,n)$.
\end{prop}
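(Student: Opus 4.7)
My plan is to treat the two halves of the claim separately.

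For the positive direction (that $M(2,n)$ satisfies the $(k+3)$rd Morse identity) I would exploit the recursion $u_m v_m^{-1} = [u_{m-1},v_{m-1}]$, immediate from $u_m = u_{m-1} v_{m-1}$, $v_m = v_{m-1} u_{m-1}$, together with the metabelian law. An easy induction shows that $u_m$ and $v_m$ have $x$- and $y$-exponents each equal to $2^{m-2}$ for $m\ge 2$; in particular $u_{k+2}$ and $v_{k+2}$ have exponents $(n,n)$, so they map to elements of the derived subgroup of $M(2,n)$ (whose abelianization is $(\Z_n)^2$). Since $M(2,n)''=1$, the subgroup $M(2,n)'$ is abelian, giving $[u_{k+2},v_{k+2}]=1$ in $M(2,n)$ and hence $u_{k+3} v_{k+3}^{-1}\in \F^n \FFF$. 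Because $M(2,n)$ is the free $2$-generator group in the variety of metabelian groups of exponent $n$, vanishing of the word $u_{k+3}v_{k+3}^{-1}$ on the generators is equivalent to $M(2,n)$ satisfying the identity.

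For the negative direction I use the winding invariant computed just above the statement. Substituting $X=1$,
\[P_{u_{k+1}v_{k+1}^{-1}}(1,Y)=\prod_{j=0}^{k-2}(1-Y^{2^j})=\sum_{i=0}^{n/2-1}(-1)^{s_2(i)}Y^i,\]
where $s_2(i)$ is the binary digit sum of $i$; the degree is $n/2-1$ and the total sum of coefficients is $0$ since the factor $1-Y$ vanishes at $Y=1$. Any horizontal invariant depends only on $P(1,Y)$ and the row coloring, so applying $\hh^1$ (whose coloring paints rows $1,2,\ldots,n/2$ black and the others white) and noting that rows $j\ge n/2$ contribute nothing, I would compute
\[\hh^1(u_{k+1}v_{k+1}^{-1})=\sum_{j=1}^{n/2-1}(-1)^{s_2(j)}-(-1)^{s_2(0)}=-1-1=-2\in\Z_n.\]
Since $-2\not\equiv 0\pmod n$ for $n=2^k\ge 4$, Theorem \ref{omega} gives $u_{k+1}v_{k+1}^{-1}\notin \FFF\F^n$, so the $(k+1)$th Morse identity fails in $M(2,n)$, and therefore in $R(2,n)$ and $B(2,n)$, both of which have $M(2,n)$ as a quotient.

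The one subtle point is the choice of coloring in the negative direction. The symmetric coloring $c_0$ yields the full Thue--Morse sum $T_{k-1}=0$ and detects nothing, so the single shift to $c_1$ is essential to isolate the asymmetric contribution of row $0$ and produce the nonzero residue $-2$.
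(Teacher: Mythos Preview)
Your proof is correct. For the positive direction you take a more elementary route than the paper: instead of factoring the winding invariant as $P_{u_{k+3}v_{k+3}^{-1}}=(1-X^nY^n)Q$ and recognising it as the winding invariant of $[w,(xy)^n]$ (a product of two $n$th powers), you use the recursion $u_{k+3}v_{k+3}^{-1}=[u_{k+2},v_{k+2}]$ directly and observe that $u_{k+2},v_{k+2}$ have $x$- and $y$-exponents equal to $n$, hence land in the abelian group $M(2,n)'$, so their commutator vanishes there. This bypasses the winding invariant machinery entirely and is a cleaner argument; the paper's route, on the other hand, keeps everything inside the $\ZZ$ picture and illustrates how the factor $(1-X^nY^n)$ corresponds to a commutator with an $n$th power. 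For the negative direction both you and the paper invoke the horizontal invariants and Theorem~\ref{omega}; the paper simply notes that $\hh^0-\hh^1=2$ because only row $0$ changes colour between the two colorings, while you compute $\hh^1=-2$ explicitly via the Thue--Morse expansion of $P(1,Y)$ together with $\hh^0=P(1,1)=0$ --- the same conclusion with a slightly more explicit calculation.
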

\begin{proof}
In order to check that $M(2,n)$ satisfies the $(k+3)$th Morse identity we only have to show that $u_{k+3}v_{k+3}^{-1}=1\in M(2,n)$. The winding invariant of $u_{k+3}v_{k+3}^{-1}$ is $P_{u_{k+3}v_{k+3}^{-1}}=(1-X^nY^n)Q$ for some polynomial $Q$. Let $w\in \F'$ be such that $P_w=Q$. Then $P_{[w,(xy)^n]}=(1-X^nY^n)Q$. Thus, $u_{k+3}v_{k+3}^{-1}=[w,(xy)^{n}] \mod \ker(W)=\F''$, and since $[w,(xy)^n]$ is a product of $n$th powers, $u_{k+3}v_{k+3}^{-1}\in \F''\F^n$, as we wanted to show.

To prove that $M(2,n)$ does not satisfy the $(k+1)$th Morse identity it suffices to prove that $\Omega (u_{k+1}v_{k+1}^{-1})\neq 1$, where $\Omega: \F' \to (\Z_{n})^{n}$ is the invariant defined above. Now, $P_{u_{k+1}v_{k+1}^{-1}}=1+R$, where $R$ is a polynomial with monomials $X^lY^l$ for $1\le l\le 2^{k-1}-1=n/2-1$. Thus the difference between $\hh^0(u_{k+1}v_{k+1}^{-1})$ and $\hh^1(u_{k+1}v_{k+1}^{-1})$ is $2\in \Z_n$. In particular one of them has to be non-zero, so $u_{k+1}v_{k+1}^{-1}$ is non-trivial in $M(2,n)$.  
\end{proof}

%In particular we deduce that the order of $\M'_{2,n}$ is at least $(n/2)^{n+1}$. Thus, the order of $|\M_{2,n}|$ is at least $\frac{n^{n+3}}{2^{n+1}}$. Newman proved \cite{New} that $|\M_{2,n}|$ is bounded below by $n^22^{n^2/4-1}$ which gives in general a much better bound. For $n=4$, however, our result says that $|\M_{2,4}|\ge 2^9$ while Newman's only gives the bound $2^7$. The order of $\M_{2,4}$ is known to be $2^{10}$. There is still one invariant that we need to define to understand this difference ($9<10$). We will define a new coloring of the squares in the integer lattice which has associated this invariant.

\section{More invariants}

In the previous section we have deduced some lower bounds for the order of $M(2,n)$ via the invariant $\overline{\Omega}$ which is composed by $n$ invariants $M(2,n)'\to \Z_n$ and the area invariant $M(2,n)'\to \Z_{\frac{n}{2}}$. We proved (a refinement of) the inequality $|M(2,n)|\ge n^2 (\frac{n}{2})^{n+1}$. On the other hand, Newman's remarks in \cite{New} show that $n^22^{\frac{n^2}{4}-1}\le|M(2,n)|\le n^{n^2+3}$. This suggests that we should be able to find quadratically many invariants $M(2,n)'\to \Z_n$ and not just linearly many. As explained in Section \ref{content}, there exists a monomorphism $M(2,n)'\to (\Z_n)^r$ for $r$ big enough, so it makes sense to look for more invariants of this type. The aim of this section is to find and make explicit some of those maps. 

\begin{teo} \label{muchosinvariantes}
Let $k\ge 2$, $n=2^k$. Let $c$ be a good coloring of $\Z_n$ and let $\varphi:\Z \times \Z\to \Z_n$ be a homomorphism such that $\varphi(1,0)$, $\varphi(0,1) \in \Z_n$ are odd. Let $t:\Z \times \Z \to \Z \times \Z$ be a translation by any vector $(i_0,j_0)\in \Z \times \Z$. Let $\inv=\inv_{\varphi t, c}$ be the associated invariant. Then for any $z\in \F'$ which is a product of $n$th powers in $\F$, $\inv(z)=0\in \Z_n$.
\end{teo}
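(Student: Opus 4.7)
The plan is to follow the three-step structure of the proof of Theorem~\ref{omega}, adapting each step to the present setting where both coordinates of $\varphi$ are odd (rather than $\varphi$ depending only on one coordinate, as in the horizontal case).

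\emph{Preliminary reduction.} First I would absorb the translation $t$ into the coloring: since the defining property $c(a) \ne c(a+n/2)$ of a good coloring is translation-invariant, the map $c'(a) := c(a + \varphi(i_0, j_0))$ is again a good coloring of $\Z_n$, and the identity $\varphi t(i, j) = \varphi(i, j) + \varphi(i_0, j_0)$ gives $\inv_{\varphi t, c} = \inv_{\varphi, c'}$. Thus we may assume $t$ is the identity.

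\emph{Steps 1 and 2.} These should transfer essentially verbatim from the proof of Theorem~\ref{omega}. Step~1 inserts auxiliary pairs $v_j^n v_j^{-n}$ to rewrite $z = u_1^n u_2^n \cdots u_r^n$ as a product of elements of $\F'$ each of the form $u^n v^n w^n$, and since $\inv$ is a group homomorphism the problem reduces to $r = 3$. In Step~2 one shows via Proposition~\ref{dospotencias} that $\inv(z)$ depends only on the exponents of $x, y$ in $u, v, w$; the main verification is that $\inv$ vanishes on a polynomial of the form $m'(1 + m + m^2 + \cdots + m^{n-1})$ for monomials $m = X^{i_m}Y^{j_m}$, $m'$. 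Applying $\inv$ yields the sum of $c$ over an arithmetic progression of length $n$ in $\Z_n$ with step $\varphi(i_m, j_m)$, which vanishes by Lemma~\ref{facil2}(i) regardless of the step's parity --- this point is crucial and is what allows the hypothesis on $\varphi$ to be relaxed beyond the horizontal case.

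\emph{Step 3.} By the same reduction to building blocks as in Theorem~\ref{omega}, it remains to show $\inv$ vanishes on any rectangle with both sides divisible by $n$ and on any stair-like region with $n-1$ steps of size $l \times h$. Rectangles are immediate: setting $\alpha := \varphi(1,0)$, each row is an arithmetic progression of length a multiple of $n$ with odd step $\alpha$ (hence coprime to $n = 2^k$), and Lemma~\ref{facil2}(i) yields zero.

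For the stair I would use the same decomposition as in Theorem~\ref{omega} into $2^i$ rectangles of type $i$ of size $2^{k-i-1}l \times 2^{k-i-1}h$, observing that the color shift between two consecutive rectangles of the same type is now $2^{k-i}(\alpha l + \beta h) \bmod n$, so the role previously played by $v_2(h)$ in the horizontal case is now taken by $s := v_2(\alpha l + \beta h)$. When $s < i$ the type-$i$ rectangles pair up with opposite colors by exactly the same argument as in Theorem~\ref{omega}. The hard part will be the case $s \ge i$: here the $2^i$ type-$i$ rectangles are identically colored, but, unlike in the horizontal case, the coloring within a single rectangle is not constant along columns (or rows), so the single-rectangle invariant is no longer automatically divisible by $2^{k-i}$, and the per-type contribution need not vanish modulo $n$. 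I expect the technical heart of the proof to be a combined analysis across the types with $s \ge i$, showing that these contributions globally cancel modulo $n$. A natural way to verify this is to establish that the multiplicity function $N(u) := |\{(i,j) \in St : \varphi(i,j) \equiv u \bmod n\}|$ satisfies $N(u) \equiv N(u + n/2) \pmod{n/2}$ for every $u \in \Z_n$, which, in view of the freedom to choose the values $c(0), c(1), \ldots, c(n/2 - 1) \in \{\pm 1\}$ independently, is equivalent to $\inv(St) \equiv 0 \pmod n$ for every good coloring.
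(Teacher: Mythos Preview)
Your reduction of the translation, Steps 1--2, and the rectangle part of Step 3 are fine and match the paper. The gap is in the stair analysis. You try to reuse the dyadic rectangle decomposition from Theorem~\ref{omega}, but you correctly observe that in the case $s\ge i$ the argument breaks down: the $2^i$ type-$i$ rectangles are identically colored, yet there is no longer any reason for a single such rectangle to have invariant divisible by $2^{k-i}$, since the coloring is no longer constant along rows or columns. Your proposed fix---reformulating the problem in terms of the multiplicity function $N(u)$ and the congruence $N(u)\equiv N(u+n/2)$---is circular: as you yourself note, this condition (together with a parity condition on $\sum_u(N(u)-N(u+n/2))/(n/2)$, which you omit) is \emph{equivalent} to $\inv(St)\equiv 0\pmod n$ for all good colorings, so it is a restatement of the goal, not a step toward it. No actual mechanism for the ``combined analysis across types'' is given.

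The paper abandons the dyadic decomposition entirely for this theorem and instead argues by induction on $(h,l)$ via a four-case split on the parities of $h$ and $l$. When exactly one of $h,l$ is odd, a direct geometric argument works: the top $\tfrac{n}{2}-1$ levels $A$ and their translate $-A$ by $\tfrac{n}{2}h$ rows are oppositely colored (because the odd dimension makes the shift an odd multiple of $\tfrac{n}{2}$), and the remainder rearranges into a rectangle with a side divisible by $n$. When both are even, the stair is decomposed into four smaller $(n-1)$-step stairs with step sizes $h/2,\,l/2$ (handled by induction), and the leftover $\pm$ regions assemble into length-$n$ arithmetic progressions of translates. When both are odd, one again cancels $A$ against $-A$, and the remaining parallelogram-shaped region $B$ is covered by $\tfrac{n}{2}hl$ translates of a single diagonal line of $\tfrac{n}{2}$ squares whose $\varphi$-values form an arithmetic progression with \emph{even} step; Lemma~\ref{facil2}(ii) then gives each such line invariant $0$ or $\tfrac{n}{2}$, and the even number $hl\cdot\tfrac{n}{2}$ of translates kills the total. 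This is where the hypothesis that \emph{both} $\varphi(1,0)$ and $\varphi(0,1)$ are odd is genuinely used, and it is precisely the ingredient your outline is missing.
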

\begin{proof}
Note that $\inv_{\varphi t,c}(z)=\inv_{\varphi t,c}(P_z)=\inv_{\varphi ,c}(X^{i_0}Y^{j_0}P_z)=\inv_{\varphi, c}(x^{i_0}y^{j_0}zy^{-j_0}x^{-i_0})$. Thus, we may assume $i_0=j_0=0$.

Just as in Theorem \ref{omega} we can assume $z$ is a product of three $n$th powers. Then we must show that for any polynomial of the form $P=m'(1+m+m^2+\ldots+m^{n-1})$, the invariant $\inv$ is trivial. But if $m'=X^{i'}Y^{j'}$ and $m=X^iY^j$, then $\inv(P)$ is the number of black terms minus the number of white terms in the progression $\varphi(i',j'), \varphi(i',j')+\varphi(i,j), \varphi(i',j')+2\varphi(i,j), \ldots , \varphi(i',j')+(n-1)\varphi(i,j)$, which is $0$ modulo $n$ by Lemma \ref{facil2}.

We move to step 3 in the proof of Theorem \ref{omega}. Note that in any $n$ consecutive squares in a row or a column, there are exactly $\frac{n}{2}$ which are black and $\frac{n}{2}$ which are white. This is because $\varphi(1,0)$ and $\varphi(0,1)$ are odd. Thus, for any rectangle with side lengths multiple of $n$ the number of black squares is the same as white squares. So we consider once again the stair-like regions.

Because this coloring is not symmetric with respect to horizontal and vertical reflections, we need to consider in principle four types of stairs: one that we walk up from right to left and the three rotations of these, by $\pi/2, \pi$ and $3\pi/2$. Again we must show that the number of black squares minus the number of white squares is a multiple of $n$. The proof will be by induction in the length $l$ and the height $h$ of the steps.

Case 1: $h$ is odd and $l$ is even. Assume first that the stair is in the position of Figure \ref{imparpar}, so one climbs up from right to left. We divide the stair in four regions, $A,-A,B,C$. Region $A$ consists of the top $n/2-1$ levels of the stair, while $-A$ is region $A$ translated downwards $\frac{n}{2}h$ units. Since $h$ is odd, $\varphi(0,1)$ is odd and $c$ is good, then $A$ and $-A$ are painted with opposite colors. Region $B$ consists of the squares below $A$ which are not in $-A$. Region $B'$ is outside the stairs and is the translation of $B$ by $\frac{n}{2}l$ units to the right. Since $l$ is even, $B$ and $B'$ are painted identically, so the invariant $\inv$ of the stair is the invariant of the rectangle $C\cup B'$, where $C$ are the squares of the stair which are not in $A,-A,B$. But this rectangle has one side of length $\frac{n}{2}l$, which is a multiple of $n$, so its invariant $\inv(C\cup B')$ is trivial. 

\begin{figure}[h] 
\begin{center}
\includegraphics[scale=0.7]{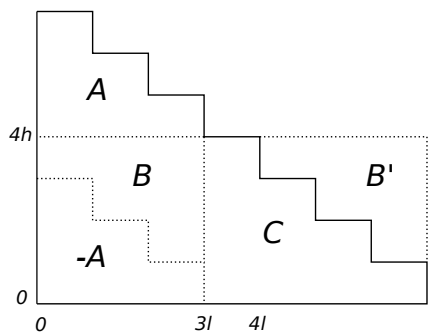}
\caption{An $(n-1)$-step stair for $n=8$, $h$ odd, $l$ even.}\label{imparpar}
\end{center}
\end{figure}

If the stair is in a different position (climbing from left to right, or the corresponding upside-down stairs), the same proof works. We have not used an induction argument for this case.

Case 2: $h$ even, $l$ odd, is symmetric to case 1.

Case 3: $h$ and $l$ even. In this case we consider four $(n-1)$-step stairs contained in our big stair (see Figure \ref{parpar}). The height of the steps in these four small stairs is $h/2$ and the length is $l/2$. Region $A$ is disjoint form $B,C,D$, and $D$ is also disjoint from $B$ and $C$. But $B$ and $C$ overlap in $n-1$ steps of size $l/2\times h/2$. They are marked with the sign $-$ in the figure. There are $n-1$ rectangles of size $l/2\times h/2$ in the big stair which are not in $A\cup B\cup C\cup D$. They are marked with sign $+$. 

\begin{figure}[h] 
\begin{center}
\includegraphics[scale=0.7]{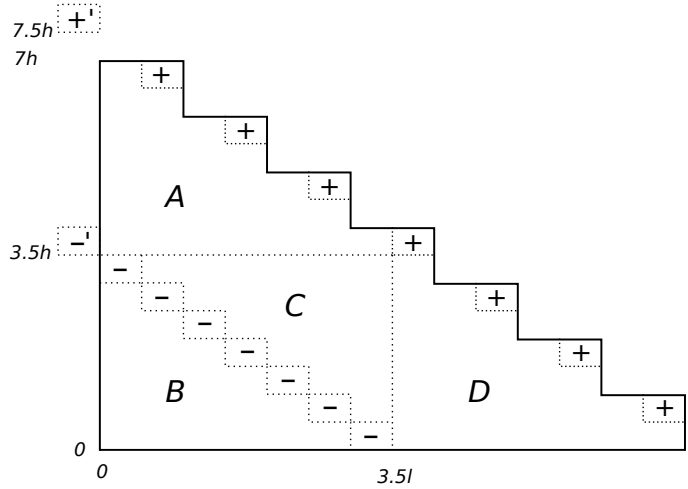}
\caption{An $(n-1)$-step stair for $n=8$, $h$ and $l$ even.}\label{parpar}
\end{center}
\end{figure}

By induction the invariants $\inv(A), \inv(B), \inv(C)$ and $\inv(D)$ are zero, so the invariant in the big stair is the invariant in the union of the $+$ regions minus the invariant in the $-$ regions. We add two $l/2\times h/2$ rectangles to the picture, those marked with $+'$ and $-'$. Since they are $\frac{n}{2}h$ squares apart and $h$ is even, they are painted with the same color. Now, the $+$ rectangles together with the $+'$ rectangle are $n$ rectangles which are aligned and the distance between two consecutive is constant. Thus the characteristic polynomial of this region (the sum of the monomials $X^iY^j$ for squares $(i,j)$ of the region) is a multiple of a polynomial of the form $1+m+m^2+\ldots+m^{n-1}$, so we already know that the invariant is trivial. The same happens with the region formed by the $-$ rectangles together with the $-'$ rectangle. Thus, the invariant of the big stair is also trivial.

Case 4: $h$ and $l$ odd. Just as in case 1, we consider the regions $A$ and $-A$ inside the stair (see Figure \ref{imparimpar}). Since $h$ is odd, they are painted with opposite colors, and we only care about the region $B$ which consists of the squares which are not in $A$ nor in $-A$.

\begin{figure}[h] 
\begin{center}
\includegraphics[scale=0.7]{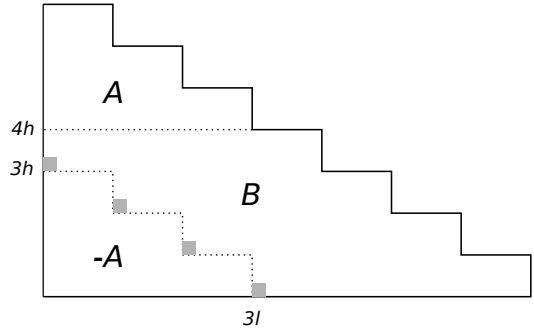}
\caption{An $(n-1)$-step stair for $n=8$, $h$ and $l$ odd.}\label{imparimpar}
\end{center}
\end{figure}

Consider the region $C$ formed by the squares of $B$ which satisfy that the neighboring squares to the left and below are not in $B$ (they appear with grey in Figure \ref{imparimpar}). The characteristic polynomial of $C$ is $P=m'(1+m+m^2+\ldots+m^{\frac{n}{2}-1})$ for $m=X^lY^{-h}$ and some monomial $m'$. In order to understand the invariant $\inv(P)$ we should analyze the colors of the numbers $a,a+b,a+2b,\ldots, a+(\frac{n}{2}-1)b \in \Z_n$, where $a$ depends on $m'$ and $b=\varphi(l,-h)$. By assumption, $b$ is even, so Lemma \ref{facil2} says that $\inv(C)$ is a multiple of $\frac{n}{2}$. Moreover, $\inv(C)=\frac{n}{2}\in \Z_n$ if $n|b$ and $\inv(C)=0$ if $n\nmid b$. Region $B$ can be covered by $hl\frac{n}{2}$ translates of region $C$. Each of them has the same invariant as $C$. Since $k\ge 2$, $hl\frac{n}{2}$ is even and $\inv(B)=hl\frac{n}{2}\inv(C)=0\in \Z_n$.
\end{proof}

If $\varphi :\Z \times \Z \to \Z_n$, $c$ and $(i_0,j_0)$ are in the hypotheses of Theorem \ref{muchosinvariantes}, then $\inv_{\varphi t,c}:\F' \to \Z_n$ induces a well defined group homomorphism $\inv_{\varphi t,c}: M(2,n)' \to \Z_n$.

\begin{ej} \label{ejm24}
In this example we exhibit a complete invariant for $M(2,4)$, that is a concrete solution to the word problem. We describe a method which decides for any $u\in \F$, whether $u$ is trivial in $M(2,4)$. Of course, this can be obtained from any finite presentation and coset enumeration or a consistent power-commutator presentation, but our methods are elementary and very easy to carry out by hand in short time. %Recall that the word problem for Burnside groups $B(d,n)$ in general is still open, although it is known to be solvable for many values of $n$.

Remember that we constructed the map $\Omega:M(2,4)'\to (\Z_4)^4$ whose image has order $2^5$. Consider the map $\widetilde{\Omega}:M(2,4)' \to (\Z_4)^5$ defined by $\widetilde{\Omega}(w)=(\Omega (w), \Lambda (w))$, where $\Lambda:M(2,4)'\to \Z_4$ is the \textit{diagonal invariant} defined as in the statement of Theorem \ref{muchosinvariantes} for $\varphi(1,0)=\varphi(0,1)=1$, $(i_0,j_0)=(0,0)$, and $c:\Z_4\to \{\textrm{black, white}\}$ the coloring that paints $0,1$ with black and $2,3$ with white. Figure \ref{complet} shows the five invariants that compose $\widetilde{\Omega}$. If $w\in \F'$ is such that $P_w=1+X^2Y^2$, then $\widetilde{\Omega}(w)=(0,0,0,0,2)$. This already implies that the order $|\textrm{Im} (\widetilde{\Omega})|\ge 2 |\textrm{Im} (\Omega)|=2^6$. On the other hand, since $|M(2,4)|=2^{10}$, then $|M(2,4)'|=2^6$, so $\widetilde{\Omega}$ is a monomorphism.

\begin{figure}[h] 
\begin{center}
\includegraphics[scale=0.9]{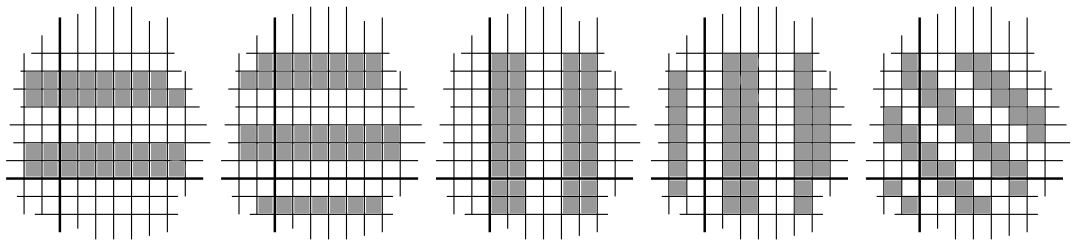}
\caption{The five colorings that form a complete invariant for $M(2,4)$: two horizontal invariants, two vertical invariants and a diagonal invariant.} \label{complet}
\end{center}
\end{figure}

The algorithm to decide whether a given element $u\in \F$ is trivial in $M(2,4)$ is the following: compute the total exponents $a=\exp (x,u)$, $b=\exp (y,u)$. If any of them is not divisible by $4$, $u$ is non-trivial in $M(2,4)$. Otherwise, let $v=x^{-a}y^{-b}u\in \F'$. Compute $\widetilde{\Omega}(v)$. If this is non-trivial, $v$ and then $u$ are non-trivial in $M(2,4)$. If it is trivial, then $u=v=1\in M(2,4)$.
\end{ej}

In the previous example we have used that $|M(2,4)|=2^{10}$. In fact the inequality $|M(2,4)|\ge 2^{10}$ follows from the fact that $|\textrm{Im}(\widetilde{\Omega})|\ge 2^6$. In Example \ref{ej210} we will prove the inequality $|M(2,4)|\le 2^{10}$ by different methods, thus providing an alternative proof that $|M(2,4)|=2^{10}$.

Let $n=2^k$. As we recalled in Section \ref{recall} and the beginning of this section, Newman proves in \cite{New} that $|M(2,n)|\ge n^22^{\frac{n^2}{4}-1}$. His short argument is as follows. Take $R=\F^{\frac{n}{2}}\F'\leqslant \F$ and $S=R^2R'\leqslant \F$. Then $\F'\leqslant R$, so $\F''\leqslant R'\leqslant S$ and $\F^n\leqslant R^2 \leqslant S$. Thus, $\F^n \F'' \leqslant S$, which shows that the order of $M(2,n)=\F/\F^n\F''$ is greater than or equal to the order of $\F/S$. But the latter is easy to compute. Note that $\F/R$ is an abelian group of order $(\frac{n}{2})^2$, so by Schreier formula $R$ is a free group of rank $1+(\frac{n}{2})^2$. Once again, $R/S$ is abelian of order $2^{\textrm{rk}(R)}=2^{1+(\frac{n}{2})^2}$ and the order of $\F/S$ is $|\F/R||R/S|=(\frac{n}{2})^22^{\frac{n^2}{4}+1}$.

Our next result improves Newman's lower bound just a little bit.

\begin{teo} \label{cotainf}
Let $k\ge 3$, $n=2^k$. Then $|M(2,n)|\ge n^2 2^{\frac{5n^2}{16}-1}$.
\end{teo}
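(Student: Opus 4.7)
The plan is to strengthen Newman's construction by passing to a smaller normal subgroup. Keeping $R = \F^{n/2}\F'$ (free of rank $1+(n/2)^2$, index $(n/2)^2$ in $\F$), Newman takes $T_0 = R^2 R'$ and uses $\F^n \leq R^2$ (from $u^n = (u^{n/2})^2$) and $\F'' \leq R'$ to get $|\F/T_0| = n^2 \cdot 2^{n^2/4-1}$. I instead take
\[
T = \F^n \cdot R^4 \cdot R',
\]
still containing $\F^n \F''$, so $\F/T$ is a quotient of $M(2,n)$. Since $T \leq R$, the order factors as $|\F/T| = (n/2)^2 \cdot |R^{ab}/(\mathrm{Im}(\F^n) + 4 R^{ab})|$, where $\mathrm{Im}(\F^n)$ denotes the image of $\F^n$ in $R^{ab} = R/R'$.

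To bound this, I use Fox calculus under the Magnus embedding $R^{ab} \hookrightarrow \Z[G]^2$ with $G = \F/R \cong \Z_{n/2}^2$, which sends $r \mapsto (\partial r / \partial x, \partial r/ \partial y)$. For a transversal element $v = x^a y^b$ with $(a,b) \in [n/2]^2$,
\[
\partial(v^n)/\partial x = \Bigl(\textstyle\sum_{i=0}^{n-1} v^i\Bigr)(1 + X + \cdots + X^{a-1}),
\]
and analogously for $\partial /\partial y$. Writing $\sigma_v = \sum_{i=0}^{m-1} v^i$ where $m = (n/2)/\gcd(a,b,n/2)$ is the order of $v$ in $G$, one has $\sum_{i=0}^{n-1} v^i = 2\gcd(a,b,n/2) \cdot \sigma_v$ in $\Z[G]$. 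Hence $\mathrm{Im}(\F^n) \leq 2 R^{ab}$, which already recovers Newman's factor $|R^{ab}/(\mathrm{Im}(\F^n)+4R^{ab})| \geq |R^{ab}/2R^{ab}| = 2^{1+n^2/4}$.

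The improvement will come from the finer quotient $2R^{ab}/4R^{ab} \cong \mathbb{F}_2^{1+n^2/4}$: transversals with $\gcd(a,b,n/2)$ even contribute $0$ there, so only the $(n/2)^2 - (n/4)^2 = 3n^2/16$ transversals with $\gcd(a,b,n/2)$ odd provide generators, of the explicit form $(\sigma_v \alpha_a,\, \sigma_v X^a \alpha_b)$ with $\alpha_k = 1 + X + \cdots + X^{k-1}$. The key claim I plan to establish is that the $\mathbb{F}_2[G]$-submodule generated by these $3n^2/16$ elements has $\mathbb{F}_2$-dimension at most $3n^2/16 + 1$ inside $\mathbb{F}_2^{1+n^2/4}$; granting this, the cokernel has dimension at least $(1+n^2/4) - (3n^2/16+1) = n^2/16$, producing the extra factor $2^{n^2/16}$ and yielding $|\F/T| \geq (n/2)^2 \cdot 2^{5n^2/16+1} = n^2 \cdot 2^{5n^2/16-1}$.

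The main obstacle is this $\mathbb{F}_2$-dimension bound. Each generator is annihilated by $v-1$, so its $\mathbb{F}_2[G]$-orbit has dimension at most $|G|/m = (n/2)\gcd(a,b,n/2)$; summing naively these orbit dimensions vastly exceeds the target, so the bulk of the work is to exhibit enough $\mathbb{F}_2$-linear relations between orbits of different transversals, coming from shared factors $\sigma_v$ for transversals whose $v$'s lie in the same cyclic subgroup of $G$ and from the obvious symmetries of $G$. The hypothesis $k \geq 3$ enters because these symmetry-based identifications become vacuous for smaller values of $k$.
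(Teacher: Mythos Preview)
Your route is entirely different from the paper's, and the decisive step is left open.

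The paper does not try to bound the image of $\F^n$ in any quotient of $R^{ab}$. It keeps Newman's $S=R^2R'$ and manufactures the extra factor $2^{n^2/16}$ by exhibiting explicit homomorphisms $M(2,n)'\to\Z_n$ that separate many elements of $S\cap\F'$: namely the diagonal coloring invariants $\Lambda_{\varphi t,c}$ of Theorem~\ref{muchosinvariantes}, taken over all slopes $b\equiv 1\pmod 8$, over two good colorings $c_0,c_1$ differing only at $\{0,n/2\}$, and over representatives $(i_0,j_0)$ of the $\Z_8\times\Z_2$-orbits on $\Z_n\times\Z_n$. Subtracting the two colorings and summing over the $n/8$ slopes isolates a single $\Z_8\times\Z_2$-orbit of coefficients at a time; this forces any polynomial with even coefficients lying in the kernel to satisfy one $\Z_4$-constraint per orbit, whence $|\mathrm{Im}\,\psi|\ge 2^{n^2/16}$.

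Your plan, by contrast, rests on the assertion that in $2R^{ab}/4R^{ab}\cong\mathbb{F}_2^{\,1+n^2/4}$ the image of $\F^n$ has $\mathbb{F}_2$-dimension at most $3n^2/16+1$. You do not prove this: you note that the naive orbit-dimension sum ``vastly exceeds the target'' and gesture at relations from ``shared factors $\sigma_v$'' and ``obvious symmetries of $G$'' without producing any. That claim \emph{is} the theorem in your framework, so what you have written is a reformulation rather than a proof. There is also a gap in the setup: $\mathrm{Im}(\F^n)$ is generated by all $u^n$, not only by $n$th powers of transversal elements $v=x^ay^b$. For $u=vr$ with $r\in R$ and $\bar v=g$ of order $n/2$, your own Fox computation gives $u^n\equiv v^n+2\sigma_g\cdot\bar r$ in $R^{ab}$, so the image in $2R^{ab}/4R^{ab}$ already contains the full submodule $\sigma_g\cdot(R^{ab}/2R^{ab})$ for every $g$ of order $n/2$; these contributions are not visibly contained in the $\mathbb{F}_2[G]$-span of your $3n^2/16$ listed generators, and you have not shown they are compatible with your dimension target. (Minor slip: the second coordinate of your generator should be $\sigma_v X^a(1+Y+\cdots+Y^{b-1})$, not a polynomial in $X$.)
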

\begin{proof}
Let $S=R^2R'$ be as above. It suffices to prove then that $|S/\F^n\F''|\ge 2^{\frac{n^2}{16}}$. We will show that $|\frac{S\cap \F'}{\F^n\F'' \cap \F'}|\ge 2^{\frac{n^2}{16}}$. Given any Laurent polynomial $P\in \ZZ$, there exists $z\in \F'$ with $P_z=P$. Thus, $z^2\in S\cap \F'$ and $2P\in W(S\cap \F') \leqslant \ZZ$.

Let $O=\{1,9,17,\ldots, n-7\}$ be the set of integers congruent to $1$ modulo $8$ between $0$ and $n-1$. Let $c_0,c_1:\Z_n\to \{\textrm{black, white}\}$ be two good colorings which differ only in $0$ and $n/2$. 

Consider the action of $\Gamma=\Z_8 \times \Z_2$ on $\Z_n \times \Z_n$ where the generator of $\Z_8$ maps a pair $(i,j)\in \Z_n\times \Z_n$ to $(i-n/8, j+n/8)$ and the generator of $\Z_2$ maps $(i,j)$ to $(i+n/2,j)$. Let $\Xi$ be a set of representatives of the orbits of this action. This set has $n^2/16$ elements.

Let $\phi: M(2,n)' \to \Z_n^{O\times \{c_0,c_1\} \times \Xi}$ be the group homomorphism defined by $\phi (z)_{b,c,(i_0,j_0)}=\inv_{\varphi_{1,b}t_{-(i_0,j_0)},c}(z)$, where $\varphi_{1,b}:\Z \times \Z \to \Z_n$ is the homomorphism determined by $\varphi_{1,b}(i,j)=i+bj$ and $t_{-(i_0,j_0)}$ is the translation by the vector $-(i_0,j_0)$ (any vector in the fiber of $\Z \times \Z \to \Z_n \times \Z_n$ over $-(i_0,j_0)$). By Theorem \ref{muchosinvariantes}, $\phi$ is a well-defined group homomorphism. To prove the result it suffices to show that the order of $\phi (S\cap \F'/\F^n\F''\cap \F')$ is greater than or equal to $2^{\frac{n^2}{16}}$. Let $\overline{\phi}: \ZZ \to \Z_n^{O\times \{c_0,c_1\} \times \Xi}$ be defined by $\overline{\phi}(P)_{b,c,(i_0,j_0)}=\inv_{\varphi_{1,b}t_{-(i_0,j_0)},c}(P)$, so we have $\overline{\phi}W=\phi q$, where $q:\F'\to M(2,n)'$ is the projection. Then $\phi(S\cap \F'/\F^n\F'' \cap \F')=\overline{\phi}W(S\cap \F')\geqslant \overline{\phi}(2\ZZ)$. The map $\overline{\phi}$ factorizes as $\ZZ \twoheadrightarrow \ZZn \to \Z_n^{O\times \{c_0,c_1\} \times \Xi}$ and we are going to study the restriction $\psi$ of the map $\ZZn\to \Z_n^{O\times \{c_0,c_1\} \times \Xi}$ to the subgroup $H$ of Laurent polynomials with coefficients in $\Z_n$ supported in the square $[n]^2$ and with all coefficients even, i.e. $P\{(i,j)\}$ is even for every $i,j\in \Z$ and $P\{(i,j)\}\neq 0$ implies that $0\le i,j \le n-1$. Note that the order of $H$ is $\left(\frac{n}{2} \right)^{n^2}$. We want to prove that $|\textrm{Im}(\psi)|\ge 2^{\frac{n^2}{16}}$.

We have a sequence 

$$ \textrm{ker}(\psi) \hookrightarrow H \overset{\psi}{\to} \Z_n^{O\times \{c_0,c_1\} \times \Xi}.$$ 

\noindent Let $P \in \textrm{ker}(\psi)$. This means that for every $b\in O$, $c=c_0,c_1$ and $(i_0,j_0)\in \Xi$, 

$$ \sum\limits_{c(i-i_0+b(j-j_0)) \textrm{ is black}} P\{(i,j)\} - \sum\limits_{c(i-i_0+b(j-j_0)) \textrm{ is white}} P\{(i,j)\}=0\in \Z_n.$$ 

\noindent If we subtract the equations corresponding to $c_0$ and to $c_1$ we obtain 

$$ 2\sum\limits_{i-i_0+b(j-j_0)=0} P\{(i,j)\} - 2\sum\limits_{i-i_0+b(j-j_0)=n/2} P\{(i,j)\}=0\in \Z_n.$$

\noindent Here the equalities $i-i_0+b(j-j_0)=0$ and $i-i_0+b(j-j_0)=n/2$ are considered modulo $n$.

We make $b$ vary among all the elements in $O$ and add all these equations to obtain

\begin{equation}
2\sum\limits_{b\in O}\sum\limits_{i-i_0+b(j-j_0)=0} P\{(i,j)\} - 2\sum\limits_{b\in O}\sum\limits_{i-i_0+b(j-j_0)=n/2} P\{(i,j)\}=0\in \Z_n.
\label{eq16}
\end{equation}

Given $0\le i,j\le n-1$, the coefficient $P\{(i,j)\}$ appears in the left term of Equation \ref{eq16} if and only if $i-i_0+b(j-j_0) \equiv 0 \mod n$ for some $b \equiv 1 \mod 8$. This is equivalent to $d=\gcd (8(j-j_0),n) | \ i-i_0+j-j_0$. In this case, the number of times that $P\{(i,j)\}$ appears in the left term is $d/8$ (multiplied by the coefficient $2$). On the other hand, $P\{(i,j)\}$ appears in the right term of Equation \ref{eq16} if and only if $i-i_0+b(j-j_0) \equiv n/2 \mod n$ for some $b\in O$. This is equivalent to $d | \ i-i_0+j-j_0-n/2$, and the number of times $P\{(i,j)\}$ appears is also $d/8$. Therefore, if $n/8$ does not divide $j-j_0$, the coefficient $P\{(i,j)\}$ in Equation \ref{eq16} cancels. The unique coefficients that survive in the equation are those $P\{(i,j)\}$ in the left term with $j\equiv j_0 \mod n/8$ and $i\equiv i_0+j_0-j \mod n$, and those $P\{(i,j)\}$ in the right with $j\equiv j_0 \mod n/8$ and $i\equiv i_0+j_0-j +n/2 \mod n$. Each of them appears $d/8=n/8$ times, multiplied by the coefficient $2$. The coefficients in the left have positive sign and those in the right negative. But since $P\in H$, $P\{(i,j)\}$ is even for all $(i,j)$, so $\frac{n}{4}P\{(i,j)\}=-\frac{n}{4}P\{(i,j)\} \in \Z_n$. In conclusion $$\frac{n}{4} \sum\limits_{(i,j)\in \Gamma \cdot (i_0,j_0)} P\{(i,j)\}= 0\in \Z_n,$$ where $\Gamma \cdot (i_0,j_0)$ is the orbit of $(i_0,j_0)$ under $\Gamma=\Z_8 \times \Z_2$. In other words, $$ \sum\limits_{(i,j)\in \Gamma \cdot (i_0,j_0)} P\{(i,j)\}= 0\in \Z_4.$$ This means that if $P\in \ker (\psi)$, the congruence of $P\{(i_0,j_0)\}$ modulo $4$ is determined by the other coefficients $P\{(i,j)\}$ with $(i,j)$ in the orbit of $(i_0,j_0)$. Thus, the order of $\ker (\psi)$ is smaller than or equal to $\left( \left(\frac{n}{2} \right)^{15} \frac{n}{4} \right)^{\frac{n^2}{16}}$ (there are $\frac{n^2}{16}$ orbits and there are only $\frac{n}{4}$ choices for $P\{(i_0,j_0)\}$ once we have chosen the other $15$ values of $P\{(i,j)\}$ in the same orbit). Finally, $$|\textrm{Im}(\psi)|=\frac{|H|}{|\ker (\psi)|}\ge \frac{\left(\frac{n}{2} \right)^{n^2}}{\left(\frac{n}{2} \right)^{n^2} \frac{1}{2^{\frac{n^2}{16}}}}=2^{\frac{n^2}{16}}.$$
\end{proof}

Recall that for every $m\ge 1$, $\gamma_m(\F)/ \gamma_{m+1} (\F)$ is a free abelian group of finite rank and a basis is given by the basic commutators of weight $m$. The basic commutators of weight $1$ are $y$ and $x$. We set $y<x$. Once we have defined the basic commutators of weight smaller than $m$ and we have ordered them, we define those basic commutators of weight $m$: they are commutators $[c_j,c_i]$ where $c_j$ and $c_i$ are basic commutators of weight $j$ and $i$, $j+i=m$, $c_j<c_i$, and, if $i\ge 2$ and $c_i=[c_t,c_s]$ then $c_t\le c_j$. We set an arbitrary order among the basic commutators of weight $m$. Each of them is greater than any commutator of weight $<m$. Our basic commutators are the inverses of the classical ones, which are defined using the other convention for commutators $(a,b)=a^{-1}b^{-1}ab$ (see \cite[Chapter 11]{Hal2}). If $\M=\F/\F''$ is the free metabelian group of rank $2$, then most basic commutators are trivial in $\M$ and for $m\ge 2$, $\gamma_m(\M)/\gamma_{m+1}(\M)$ is generated by the basic commutators $e_{i,m-i-1}=[\underbrace{x,x,\ldots, x}_{i},\underbrace{y,y, \ldots, y}_{m-i-1},x]=[x,[x, \ldots, [x, [y,[y, \ldots, [y,x]]\ldots ]$, where $0\le i\le m-2$. For $i=0$, $e_{0,m-1}$ this is just the $(m-1)$-Engel word $e_{m-1}$.

\begin{prop} \label{basiccom}
Let $n=2^k$ for some $k\ge 3$. Then $e_{i,m-i-1}$ is non-trivial in the class $m$ factor $\gamma_{m}(M(2,n)) /\gamma_{m+1}(M(2,n))$ for every $2\le m\le n+2$ and every $0\le i\le m-2$.
\end{prop}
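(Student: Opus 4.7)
\textbf{Step 1: The winding invariant of $e_{i,m-i-1}$.} I would start by applying Remark~\ref{remark} iteratively to the definition $e_{i,m-i-1}=[x,[x,\ldots,[x,[y,\ldots,[y,[y,x]]\ldots]]\ldots]$. The innermost piece satisfies $P_{[y,x]}=-1$. Each outer $[y,\cdot]$ multiplies the winding invariant by $(Y-1)$ and each outer $[x,\cdot]$ by $(X-1)$, yielding
\[
P_{e_{i,m-i-1}}=-(X-1)^i(Y-1)^{m-i-2}.
\]

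\textbf{Step 2: The invariants.} For $m=2$ I would use the area invariant $A:M(2,n)'\to\Z_{n/2}$, $A(P)=P(1,1)$: it is trivial on $\F^n\F''$ by \cite[Theorem~59]{wind} and on $\gamma_3(\F)$ since $I=(X-1,Y-1)$ vanishes at $(1,1)$, while $A([y,x])=-1\not\equiv 0\pmod{n/2}$ for $n\ge 4$. For $3\le m\le n+2$ I would use a \emph{diagonal invariant}: take $\varphi:\Z\times\Z\to\Z_n$ with $\varphi(a,b)=a+b$ (both $\varphi(1,0)$ and $\varphi(0,1)$ are odd), a good coloring $c$ of $\Z_n$, and a translation $(i_0,j_0)$; Theorem~\ref{muchosinvariantes} then delivers a homomorphism $\Lambda=\inv_{\varphi t,c}:M(2,n)'\to\Z_n$. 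Because the colour of $(a,b)$ depends only on $a+b-(i_0+j_0)\bmod n$, $\Lambda$ factors through the \emph{diagonal projection} $\pi:\ZZ\to\Z[Z^{\pm 1}]$, $X,Y\mapsto Z$; explicitly $\Lambda(P)=\tilde\Lambda_c\bigl(Z^{-s_0}\pi(P)\bigr)$ with $s_0=i_0+j_0$ and $\tilde\Lambda_c(Z^k)=c(k\bmod n)$. Crucially, $\pi(P_{e_{i,m-i-1}})=-(Z-1)^{m-2}$ is \emph{independent of $i$}, so a single choice of $(c,(i_0,j_0))$ will address all basic commutators of weight $m$ at once.

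\textbf{Step 3: The two conditions on $(c,s_0)$.} Detecting $e_{i,m-i-1}$ in $\gamma_m(M(2,n))/\gamma_{m+1}(M(2,n))$ reduces to exhibiting $(c,s_0)$ with
\begin{enumerate}
\item[(a)] $\tilde\Lambda_c\bigl((Z-1)^{m-1}Z^s\bigr)\equiv 0\pmod n$ for every $s\in\Z$ (so $\Lambda$ vanishes on $\gamma_{m+1}(\F)$, since $z\in\gamma_{m+1}(\F)$ has $P_z\in I^{m-1}$ and $\pi(I^{m-1})=(Z-1)^{m-1}\Z[Z^{\pm 1}]$), and
\item[(b)] $\tilde\Lambda_c\bigl((Z-1)^{m-2}Z^{s_0}\bigr)=\sum_{j=0}^{m-2}(-1)^{m-2-j}\binom{m-2}{j}c(j+s_0)\not\equiv 0\pmod n$.
\end{enumerate}
Condition (b) is a signed binomial sum whose nonvanishing mod $n$ can be arranged by a direct computation in the spirit of Example~\ref{ocho} and Lemma~\ref{accion}, exploiting the symmetry $\binom{m-2}{j}=\binom{m-2}{m-2-j}$ and the known $2$-adic structure of binomial coefficients (so that most terms cancel in pairs, leaving a controllable residue modulo $n$).

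\textbf{Step 4: The main obstacle.} Condition (a) is the heart of the matter: the coloring $c$, viewed as an element of $\Z_n[\Z_n]$, must annihilate every cyclic translate of $(Z-1)^{m-1}$ modulo $n$. Fourier-theoretically, a good coloring decomposes over the odd characters $\chi_r$ of $\Z_n$ (those with $\chi_r(n/2)=-1$), and the identity $c(j+n/2)=-c(j)$ forces $\hat c(\chi_r)\in 2\Z[\zeta_n]$. Since $2$ ramifies in $\Z[\zeta_n]$ as $(1-\zeta_n)^{2^{k-1}}$, and $v(\zeta_n^r-1)=1$ for odd $r$, one obtains the automatic valuation bound $v\bigl(\hat c(\chi_r)(\zeta_n^r-1)^{m-1}\bigr)\ge 2^{k-1}+(m-1)$, which exceeds $v(n)=k\cdot 2^{k-1}$ as soon as $m-1\ge (k-1)2^{k-1}$, handling the top of the range. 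For smaller $m$ the extra $2$-divisibility of $\hat c(\chi_r)$ must be engineered by a careful combinatorial construction of $c$; I expect this to be a stairs-type partition argument analogous to the proofs of Theorems~\ref{omega} and~\ref{muchosinvariantes}, breaking the grid into regions whose invariants cancel in colour-opposite pairs. The top boundary case $m=n+2$ is further helped by $\binom{n}{n+1}=0$, which eliminates the highest Veronese-type obstruction coming from $W(\F^n\cap\F')$ modulo $I^{m-1}$.
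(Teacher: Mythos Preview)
Your proposal shares the paper's core idea --- the diagonal invariant collapses $P_{e_{i,m-i-1}}$ to $-(Z-1)^{m-2}$, independently of $i$ --- but the argument is incomplete and misses two simplifications that the paper exploits.

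First, you do not need a separate invariant for each $m$. In $\M$ one has $[x,e_{i,j}]=e_{i+1,j}$ and $[y,e_{i,j}]=e_{i,j+1}$ (since $(X-1)(Y-1)=(Y-1)(X-1)$ in $\ZZ$); hence if some $e_{i,m-i-1}$ lies in $\gamma_{m+1}(M(2,n))$, iterated bracketing with $x$ and $y$ pushes some $e_{i',n-i'+1}$ into $\gamma_{n+3}(M(2,n))$. So it suffices to treat $m=n+2$, and the paper does exactly this. Second, your condition~(a) --- that $\Lambda$ vanish on $\gamma_{m+1}$ modulo $n$ --- is stronger than needed, and this is precisely why your Step~4 stalls. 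The paper fixes the single diagonal invariant with the standard coloring $c$ (painting $0,\dots,\tfrac{n}{2}-1$ black) and shows only that $\Lambda(z)\equiv 0\pmod 8$ for every $z\in\gamma_{n+3}(\F)$, while $\Lambda(e_{i,n-i+1})=\binom{n}{n/2}-2\equiv 4\pmod 8$ by Example~\ref{ocho} and Lemma~\ref{accion}. Both facts are obtained by short explicit binomial computations: for the standard coloring one finds $\Lambda((Y-1)^{n+1})=2\binom{n}{n/2-1}-2n\equiv 0\pmod n$ because $\Z_n$ acts freely on $\binom{\Z_n}{n/2-1}$; and the change $\Lambda''-\Lambda'$ on $(Y-1)^{n+1}$ between two good colorings differing only at a pair $\{j,j+\tfrac{n}{2}\}$ is checked, for each $j$, to be $\equiv 0\pmod 8$ using the same orbit-counting idea as in Lemma~\ref{accion}. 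No Fourier analysis or $2$-adic valuation bookkeeping is required; your Step~4 outlines a substantially harder programme whose completion is left open, so as written the proof has a genuine gap.
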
  
\begin{proof}
It suffices to prove the statement for $m=n+2$. The winding invariant of $e_{i,n-i+1}$ is $-(X-1)^i(Y-1)^{n-i}$. Let $\inv :M(2,n)' \to \Z_n$ be the diagonal invariant associated to $\varphi :\Z \times \Z \to \Z_n$, where $\varphi(1,0)=\varphi (0,1)=1$, $t$ the identity of $\Z \times \Z$ and $c:\Z_n \to \{\textrm{black, white}\}$ the good coloring which paints $0,1,\ldots, \frac{n}{2}-1$ with black. Since $\inv (X^rY^s)=\inv (Y^{r+s})=\hh^0(Y^{r+s})$, then $\inv (e_{i,n-i+1})=\hh^0 (-(Y-1)^{n})=\hh^0 (e_{n+1})=\binom{n}{n/2}-2 \in \Z_n$ is not a multiple of $8$ by Example \ref{ocho} and Lemma \ref{accion}. On the other hand we will see that $\inv (z)$ is a multiple of $8$ for every $z\in \gamma_{n+3}(\F)$. The proof is similar to that of Lemma \ref{gama}. If $z\in \gamma_{n+3}(\F)$, then $P_z$ is a sum of multiples of polynomials of the form $P=(m_1-1)(m_2-1)\ldots (m_{n+1}-1)$ where the $m_i$ are monomials. Thus, it suffices to prove that for any good coloring $c':\Z_n \to \{\textrm{black, white}\}$ the diagonal invariant $\inv ': \F' \to \Z_n$ associated to $\varphi$, $t$ and $c'$, satisfies that $\inv '(P)$ is a multiple of $8$. Now, $\inv '(P(X,Y))=\inv '(P(Y,Y))$, and $P(Y,Y)$ is a multiple of $Q=(Y-1)^{n+1}$. Therefore, it suffices to prove that $\inv' (Q)$ is a multiple of $8$ for each good coloring $c'$.

Suppose first that $c'=c$, so $\inv '=\inv$. Then $$\inv (Q)=\sum\limits_{i=0}^{\frac{n}{2}-1} (-1)^{i+1} \binom{n+1}{i}-\sum\limits_{i=\frac{n}{2}}^{n-1} (-1)^{i+1} \binom{n+1}{i}+\sum\limits_{i=n}^{n+1} (-1)^{i+1} \binom{n+1}{i}.$$

Using that $\binom{n+1}{i}=\binom{n}{i-1}+\binom{n}{i}$ for $i\ge 1$, we obtain that $\inv (Q)=2\binom{n}{n/2-1}-2n$. Now, the action of $\Z_n$ on the set $\binom{\Z_n}{n/2-1}$ is free: if $S\subseteq \Z_n$ is a subset of cardinality $n/2-1$ and $\frac{n}{2} \cdot S=S$, then $|S|=2|S\cap [n/2]|$ is even, a contradiction. Thus, every orbit has cardinality $n$ and then $\binom{n}{n/2-1}$ is a multiple of $n$. This proves that $\inv (Q)=0\in \Z_n$, so in particular it is a multiple of $8$.

Suppose now that $c':\Z_n\to \{\textrm{black, white}\}$ is any good coloring. Then $c'$ can be obtained from $c$ by a sequence of changes of colors of pairs $j, j+\frac{n}{2}$ for $0\le j\le \frac{n}{2}-1$. Thus, it suffices to prove that if two good colorings $c', c''$ of $\Z_n$ differ only in $j$ and $j+\frac{n}{2}$, then $\inv ''(Q)-\inv '(Q)$ is a multiple of $8$ for the invariants $\inv ', \inv ''$ associated to $c',c''$. 

If $j=0$, then $\inv ''(Q)-\inv '(Q)=2(\binom{n+1}{0}-\binom{n+1}{n/2}+\binom{n+1}{n}) \equiv 2(2-\binom{n+1}{n/2}) \mod 8$. Again, $\binom{n+1}{n/2}=\binom{n}{n/2-1}+\binom{n}{n/2}$ and the first term is a multiple of $n$. By the argument in Lemma \ref{accion}, $\binom{n}{n/2} \equiv 2 \mod 4$, so $\inv ''(Q)-\inv '(Q)$ is a multiple of $8$.

If $j=1$, $\inv ''(Q)-\inv '(Q)=2(-\binom{n+1}{1}+\binom{n+1}{n/2+1}-\binom{n+1}{n+1})\equiv 2(\binom{n+1}{n/2}-2) \equiv 0 \mod 8$ by the previous paragraph.

Suppose now that $2\le j \le \frac{n}{2}-1$. Then $\inv ''(Q)-\inv '(Q)= (-1)^j2(\binom{n+1}{j}-\binom{n+1}{n/2+j})=(-1)^j2(\binom{n}{j-1}+\binom{n}{j}-\binom{n}{n/2+j-1}-\binom{n}{n/2+j})$. If $0\le r \le n$ is different from $0, \frac{n}{2}, n$, then the action of $\Z_n$ on $\binom{\Z_n}{r}$ has no orbits of cardinality $1$ or $2$. In this case, $4|\binom{n}{r}$. Since $j-1,j,\frac{n}{2}+j-1$ and $\frac{n}{2}+j$ are different from $0, \frac{n}{2}, n$, then $\inv ''(Q)-\inv '(Q)$ is a multiple of $8$.
\end{proof}

Proposition \ref{basiccom} shows that the order of each of the $m-1$ generators $e_{i,m-i-1}$ of the class $m$ factor $\gamma_{m}(M(2,n)) /\gamma_{m+1}(M(2,n))$ is at least $2$. If moreover one could prove that the order of this factor is greater than or equal to $2^{m-1}$, for $2\le m\le n+2$, then $|M(2,n)'|\ge 2^{\frac{(n+1)(n+2)}{2}}$, and this would provide a better bound than that given in Theorem \ref{cotainf}.

\section{Upper bounds for $|M(2,2^k)|$}

As explained in Section \ref{recall}, Newman proved \cite{New} that $|M(2,2^k)|\le 2^{3k+k2^{2k}}$. In general this bound is far from optimal. In this section we use the winding invariant map to find better upper bounds. We will focus on the special case $k=2$, were the bounds found are sharp.

We recall from \cite{wind} the construction of the winding invariant map associated to a group with a \textit{cocommutative} presentation. A Laurent polynomial $P\in \ZZ$ can be represented graphically by writing each non-zero coefficient $P\{(i,j)\}$ in the square $(i,j)$ of the grid. The graphical representation of a polynomial is called a \textit{piece}, so we will also say that a polynomial \textit{is} a piece. The piece $P_w=W(w)$ associated to an element $w\in \F'$ has inside each square, the winding number of the curve $\gamma_w$ around the center of that square.  

In \cite[Proposition 18]{wind} it is proved that for a group $G$ presented by $\langle x,y| S\rangle$ where $S$ is a subset of $\F'$, $W:\F' \to \ZZ$ induces an epimorphism $\overline{W}:G' \to \ZZ/I$ where $I$ is the ideal of $\ZZ$ generated by the polynomials $P_s$ with $s\in S$. Moreover $\textrm{ker}(\overline{W})=G''$, so there is an induced isomorphism $G'/G''\to \ZZ/I$, which we also denote by $\overline{W}$. We apply this to the group $G$ presented by $\langle x,y| \ \F' \cap \F^n\rangle$. We have then that $G'/G''$ is isomorphic to $M(2,n)'$, so there is an isomorphism $\overline{W}:M(2,n)'\to \ZZ/I$, where $I$ is the ideal generated by (consisting of) the polynomials $P_s$ with $s\in S= \F' \cap \F^n$. The obvious observation is then:

\begin{obs} \label{obspiezas}
The order of $M(2,n)'$ is smaller than or equal to the cardinality of any set $A\subseteq \ZZ$ of representatives of $\ZZ/I$.
\end{obs}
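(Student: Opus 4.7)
The plan is simply to unpack the isomorphism $\overline{W}:M(2,n)' \to \ZZ/I$ established just above, which immediately gives $|M(2,n)'|=|\ZZ/I|$. With this in hand the remark becomes a purely set-theoretic statement: for any set $A\subseteq \ZZ$ that contains at least one representative of every coset of $I$, the restriction to $A$ of the canonical projection $\pi:\ZZ \to \ZZ/I$ is surjective, so $|A|\ge |\ZZ/I|=|M(2,n)'|$.

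Thus the proof itself is a one-line observation and presents no real difficulty. The actual content of the remark is the strategy it signals for the rest of Section 6: to bound $|M(2,n)|$, which equals $n^2|M(2,n)'|$ since the abelianization has order $n^2$, it suffices to exhibit a finite family $A$ of pieces such that every Laurent polynomial in $\ZZ$ is congruent modulo $I$ to one in $A$. Any such $A$ automatically yields $|M(2,n)|\le n^2|A|$.

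The genuine obstacle, then, is not in proving the remark but in applying it effectively. One needs a usable supply of generators for $I$, namely the winding pieces $P_s$ with $s\in \F'\cap \F^n$, together with a reduction procedure that, starting from an arbitrary Laurent polynomial, uses the relations $P_s\equiv 0$ to move it into a prescribed normal form supported on a bounded region of the grid and with restricted coefficients. This is what the subsequent treatment of the case $k=2$ presumably carries out, producing a sharp set $A$ and thereby a sharp upper bound for $|M(2,4)|$.
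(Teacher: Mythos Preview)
Your proof is correct and matches the paper's own treatment: the paper introduces the remark with ``The obvious observation is then:'' and gives no further argument, since it follows immediately from the isomorphism $\overline{W}:M(2,n)'\to \ZZ/I$ exactly as you explain. Your additional commentary on the strategic role of the remark for the rest of the section is also accurate.
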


We will understand the quotient $\ZZ/I$ graphically. The sum of two pieces $P,Q\in \ZZ$ is the piece which is obtained by adding the two numbers of each square. The product of a piece $P$ by a monomial $X^iY^j$ is the translation of the piece by the vector $(i,j)\in \Z \times \Z$. All the computations that follow can be done algebraically, but we believe the graphical representation makes proofs simpler.

\begin{ej} \label{ej210}
Gupta and Tobin \cite{GT} proved that $|M(2,4)|=2^{10}$. Since $M(2,4)/M(2,4)'=\Z_2\times \Z_2$, this is equivalent to proving that $|M(2,4)'|=2^6$. In Example \ref{ejm24} we gave an alternative proof that $|M(2,4)'|\ge 2^6$. We will use the representation by pieces together with Remark \ref{obspiezas} to give an alternative proof that $|M(2,4)'|\le 2^6$, thus providing a different proof of Gupta and Tobin's result. In turn, this will give a new algorithm to decide whether an element in $M(2,4)$ is trivial (different from the one given in Example \ref{ejm24}). 

Step 1. Suppose first that $k$ is an arbitrary positive integer and $n=2^k$. Let $[x,y]=xyx^{-1}y^{-1}\in \F$. Then $w=[x,y]^n\in S= \F' \cap \F^n$ and $W(w)=n\in \ZZ$. Therefore, the piece that has the number $n$ in the square $(0,0)$ and all the other numbers equal to zero, is in $I$. Hence, all the pieces whose coefficients are divisible by $n$ are also in $I$. This proves that any element in $\ZZ/I$ has a representative with all the coefficients in the interval $\{0,1,2,\ldots, n-1\}$.

Step 2. Let $u=([x,y]y[y,x])^ny^{-n}\in S$. Then $W(u)=1-Y^n\in I$. Thus, the piece in Figure \ref{pieza1} lies in $I$. For any piece in $\ZZ$ we can add and subtract translates of $u$ to obtain a representative whose only non-zero coefficients lie in the rows $0,1,\ldots, n-1$. A symmetric argument shows that in fact there is a representative with all the non-zero coefficients in the square $[0,n]\times [0,n]$ of rows $0$ to $n-1$ and columns $0$ to $n-1$. We say that the representative is \textit{concentrated} in the square $[0,n]\times [0,n]$.

\begin{figure}[h] 
\begin{center}
\includegraphics[scale=0.7]{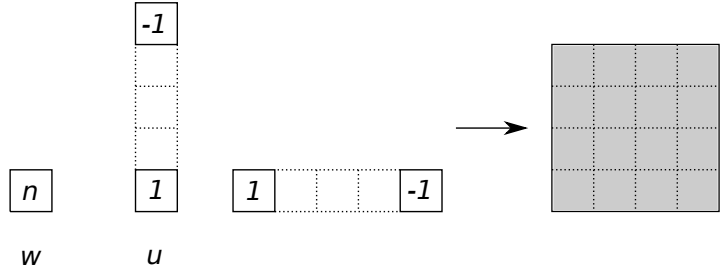}
\caption{The pieces associated to $w=[x,y]^n$, $u=([x,y]y[y,x])^ny^{-n}$ and $([x,y]x[y,x])^nx^{-n}$ turn any element of $\ZZ$ to a representative concentrated in $[0,n]\times [0,n]$ with all the coefficients in the interval $[0,n-1]$. In this picture $n=4$.}\label{pieza1}
\end{center}
\end{figure}

This argument shows that $|M(2,n)'|$ is smaller than or equal to $n^{n^2}=2^{k2^{2k}}$, so $|M(2,n)|\le n^{n^2+2}$. %This already slightly improves the bound given by Newman. In the case that $n=4$ the bound is $|M(2,4)'|\le 2^{k2^{2k}}=2^{32}$, whis is still very far from the bound $2^6$ that we want to prove. So we will look for more pieces in $I$. 

Step 3. Let $v=(xyx^{-1})^ny^{-n}\in S$, so $W(v)=1+Y+Y^2+\ldots + Y^{n-1}\in I$ is the piece which appears in Figure \ref{pieza2}. Adding multiples of horizontal translates of this piece we can turn any piece concentrated in the square $[0,n]\times [0,n]$ to another piece concentrated in the rectangle $[0,n]\times [0,n-1]$. And by a symmetric argument, for any element in $\ZZ$ there is a representative concentrated in the square $[0,n-1]\times [0,n-1]$ with all the coefficients between $0$ and $n-1$. Therefore $|M(2,n)'|\le n^{(n-1)^2}$ (so $|M(2,n)|\le n^{(n-1)^2+2}$).

\begin{figure}[h] 
\begin{center}
\includegraphics[scale=0.7]{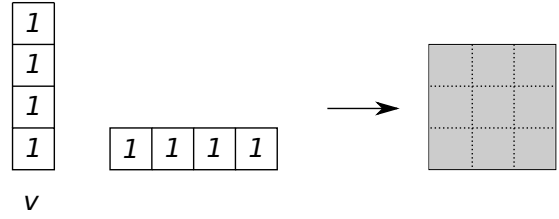}
\caption{The pieces associated to $v=(xyx^{-1})^ny^{-n}$ and $x^n(yx^{-1}y^{-1})^n$ turn any element of $\ZZ$ concentrated in $[0,n]\times [0,n]$ to a representative concentrated in $[0,n-1]\times [0,n-1]$. Together with piece $w$, we can assume each coefficient is in the interval $[0,n-1]$.}\label{pieza2}
\end{center}
\end{figure}

Before going further, let us summarize some simple yet useful remarks. If $Q\in \ZZ$ is a piece in $I$, then any translate of $Q$ is in $I$ and any multiple as well. In particular $-Q\in I$, which is the piece that contains in each square of the grid the opposite of the coefficient $Q$ has. Also, if $s=s(x,y)\in S=\F'\cap \F^n$, then $s(x^{-1},y)$ also lies in $S$. Thus if $Q=Q(X,Y)$ is a piece in $I$, the symmetric piece $Q(X^{-1},Y)$ with respect to the $y$-axis also lies in $I$. A similar argument shows that rotations of $Q$ by $\pi/2, \pi$ and $3\pi/2$ are also in $I$. Therefore $I$ is invariant by translations, rotations of angle multiple of $\pi/2$ and symmetries about the axes and the lines $y=x$ and $y=-x$.

Remark. Suppose $A$ is a region (a union of squares of the grid), and that a piece $P\in \ZZ$ is concentrated in $A$. That is, all the coefficients of $P$ out of $A$ are zero. If $Q$ is a piece in $I$ which is also concentrated in $A$, and if $(i,j)$ is a square of $Q$ with coefficient $1$ or $-1$, then there is a representative of $P$ in $\ZZ/I$ concentrated in the region $B$, obtained from $A$ by removing the square $(i,j)$. This representative is just $P$ plus a multiple of $Q$. This is the idea we already used in step 3 (and step 2 as well).

Step 4. The first three steps worked for arbitrary $k$. From now on we will assume $k=2$, $n=4$. Let $t=x^4(x^{-1}y)^4y^{-4}\in S$. The piece corresponding to $t$ appears in Figure \ref{pieza3}, together with a translate $T$ of a symmetric. The difference between the two pieces is $Q_1\in I$.  Subtracting a translate of $Q_1$ to itself we obtain the piece $Q_2\in I$. 

\begin{figure}[h] 
\begin{center}
\includegraphics[scale=0.7]{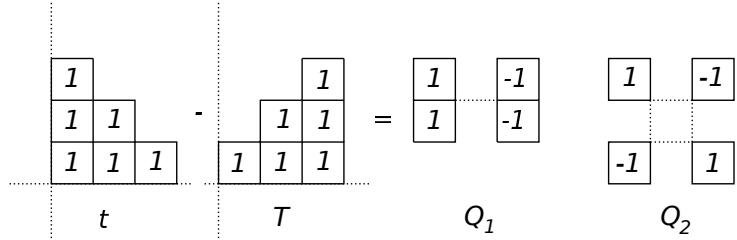}
\caption{The piece associated to $t=x^4(x^{-1}y)^4y^{-4}$ and a symmetric $T$. At the right, the pieces $Q_1$ and $Q_2$. }\label{pieza3}
\end{center}
\end{figure}

We apply the idea of the Remark to the piece $Q_1\in I$. This can be used to transform an element $P\in \ZZ$ concentrated in the square $[0,3]\times [0,3]$ into a representative concentrated in a region with one square less (see Figure \ref{pieza4}). We do this two more times with $Q_1$ and a rotation of $Q_1$ to find a representative concentrated in the stair of three steps.

\begin{figure}[h] 
\begin{center}
\includegraphics[scale=0.7]{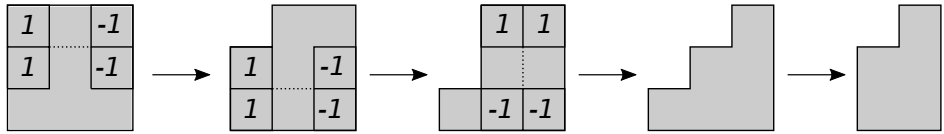}
\caption{An element concentrated in $[0,3]\times [0,3]$ has a representative concentrated in the region of five squares.}\label{pieza4}
\end{center}
\end{figure}

We then use the piece $T$ in Figure \ref{pieza3} to obtain a representative concentrated in the last region of Figure \ref{pieza4}, which consists of just five squares.

Step 5. Let $t'=x^8(x^{-2}y)^4y^{-4}\in S$. The piece associated to $t'$ appears in Figure \ref{pieza6}. If we subtract the piece associated to $t$, then we subtract (a translate of) the horizontal piece in Figure \ref{pieza2}, then piece $Q_2$, and, finally piece $T$, we obtain a piece with only two non-zero coefficients, both equal to $-2$. This is piece $D\in I$.

\begin{figure}[h] 
\begin{center}
\includegraphics[scale=0.7]{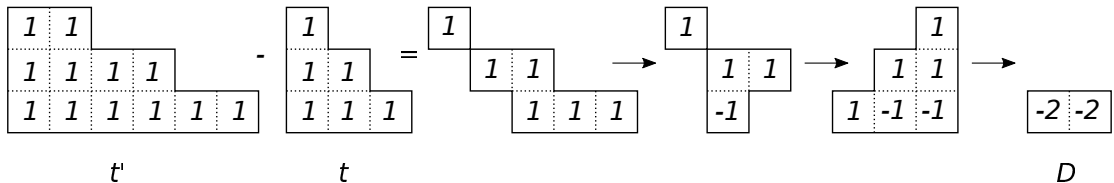}
\caption{Using pieces from $I$ we obtain piece $D\in I$.}\label{pieza6}
\end{center}
\end{figure}

Step 6. From previous steps we know that any element $P\in \ZZ$ has a representative in $\ZZ/I$ concentrated in the region of five squares in Figure \ref{pieza4}. Adding multiples of piece $D$ and of its vertical version, we can find a representative of $P$ concentrated in the same five-square region, and where four of the five coefficients inside are $0$ or $1$. Finally we can use the piece associated with $w$ (step 1), to make the last coefficient lie in the interval $\{0,1,2,3\}$. There are only $2^6$ pieces satisfying these conditions, so the order of $\ZZ/I$ is at most this number.

\begin{figure}[h] 
\begin{center}
\includegraphics[scale=0.7]{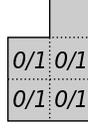}
\caption{The representative of $P$ is one among $64$ pieces.}\label{pieza7}
\end{center}
\end{figure}

This finishes the proof that $|M(2,4)|\le 2^{10}$.

\end{ej} 

In the previous example we have proved the following bound for arbitrary $k\ge 1$ and $n=2^k$: $|M(2,n)|\le n^{(n-1)^2+2}$. In fact, using the piece corresponding to $t=x^n(x^{-1}y)^ny^{-n}$ twice, we can obtain a representative concentrated in the region $[0,n-1]\times [0,n-1]$ with two squares removed. Note that this part of the proof does not use the fact that $n$ is a power of $2$. It works for an arbitrary integer $n\ge 3$. Thus we have proved the following

\begin{prop} \label{cotasup}
Let $n\ge 3$ be an integer. Then $|M(2,n)|\le n^{(n-1)^2}$.
\end{prop}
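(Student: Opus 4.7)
Steps 1--3 of Example \ref{ej210} never use the specific value $n=4$ and work verbatim for any $n \ge 3$: the elements $[x,y]^n$, $([x,y]y[y,x])^n y^{-n}$ together with its horizontal analogue, and $(xyx^{-1})^n y^{-n}$ together with its horizontal analogue all lie in $S=\F'\cap\F^n$, and their associated pieces $n$, $1-Y^n$, $1-X^n$, $1+Y+\cdots+Y^{n-1}$, and $1+X+\cdots+X^{n-1}$ in $I$ suffice to concentrate any representative of $\ZZ/I\cong M(2,n)'$ inside the square $[0,n-1]\times [0,n-1]$ with coefficients in $\{0,1,\ldots,n-1\}$. This already yields $|M(2,n)'|\le n^{(n-1)^2}$. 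The plan is then to remove two more squares from this region by invoking the Remark from Example \ref{ej210} twice, for two pieces obtained from $t=x^n(x^{-1}y)^ny^{-n}$.

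A direct analysis of $\gamma_t$, in which the two edges $(n-1,0)$--$(n,0)$ and $(0,n-1)$--$(0,n)$ are each traversed once and immediately reversed and hence cancel, shows that $P_t=\sum_{i+j\le n-2}X^iY^j\in I$ is the characteristic piece of the staircase $\{(i,j):0\le i,j,\ i+j\le n-2\}$, with every nonzero coefficient equal to $1$. Since $P_t$ is concentrated in $[0,n-1]\times [0,n-1]$ and carries a $1$ at $(n-2,0)$, the Remark gives representatives concentrated in $[0,n-1]\times [0,n-1]\setminus\{(n-2,0)\}$. For the second reduction I would take $Q=Y^{n-2}P_t(X,Y^{-1})$, which lies in $I$ because $I$ is invariant under the reflection $Y\leftrightarrow Y^{-1}$ and under multiplication by monomials. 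A quick check shows that $Q$ has support $\{(i,j):0\le i\le j\le n-2\}$ with coefficient $1$ everywhere, and in particular $1$ at $(0,n-2)$. This support avoids the previously removed square $(n-2,0)$ whenever $n\ge 3$, so the Remark applies a second time, leaving a region of only $(n-1)^2-2$ squares. Combined with the coefficient bound this gives $|M(2,n)'|\le n^{(n-1)^2-2}$, and multiplying by $|M(2,n)/M(2,n)'|=n^2$ yields the claimed $|M(2,n)|\le n^{(n-1)^2}$.

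The main thing I expect to have to argue carefully is the choice of the second piece: the naive horizontal reflection $X^{n-2}P_t(X^{-1},Y)$ still contains $(n-2,0)$ in its support and is therefore useless after the first reduction, whereas reflecting in the $y$-coordinate produces a staircase in the opposite corner $\{i\le j\}$ that misses $(n-2,0)$ for $n\ge 3$. Beyond this, the only non-routine ingredient is the computation of $P_t$ itself, which comes down to identifying the two cancelling edge pairs in $\gamma_t$.
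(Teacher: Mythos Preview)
Your argument is correct and follows the same approach as the paper, which simply says that steps 1--3 of Example \ref{ej210} work for any $n\ge 3$ and that ``using the piece corresponding to $t=x^n(x^{-1}y)^ny^{-n}$ twice'' one removes two further squares from $[0,n-1]\times[0,n-1]$. You have filled in the details the paper leaves implicit: the explicit staircase form $P_t=\sum_{i+j\le n-2}X^iY^j$, the choice of $(n-2,0)$ for the first removal, and the reflected-and-translated piece $Q=Y^{n-2}P_t(X,Y^{-1})$ supported on $\{i\le j\le n-2\}$ for the second removal, together with the observation that this support avoids $(n-2,0)$ precisely when $n\ge 3$.
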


\section{A lower bound for the order of the restricted Burnside group $R(2,8)$}

Although Zel'manov \cite{Zel1,Zel2} proved that the restricted Burnside group $R(d,m)$ exist for all $d,m$, its order is known in very few cases (see Section \ref{recall} above). As far as we know, the best lower bound for $|R(2,8)|$ available in the literature is given in \cite{GHMN}: $2^{4109}$. In fact, this is the order of the group $\F/(\F^4)^2$. Indeed, $|\F/\F^4|=|B(2,4)|=2^{12}$, so by Schreier's formula, $\rk (\F^4)=1+2^{12}=4097$. Now $(\F^4)' \leqslant (\F^4)^2$, so $\F^4/(\F^4)^2$ is abelian of order $2^{\rk \F^4}$. Thus, $|\F/(\F^4)^2|=|\F/\F^4||\F^4/(\F^4)^2|=2^{4109}$. We will use our invariants to show that this bound is not sharp. Moreover we will prove the following

\begin{prop} \label{restric}
$|R(2,8)|\ge 2^{4115}$.
\end{prop}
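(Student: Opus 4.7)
The existing bound $|R(2,8)| \ge 2^{4109}$ comes from the finite quotient $\F/(\F^4)^2$. The strategy is to refine this by intersecting $(\F^4)^2$ with the metabelian kernel $\F^8\F''$ to obtain a larger finite quotient of $B(2,8)$. Set $N = (\F^4)^2 \cap \F^8 \F''$, a normal subgroup of $\F$ that contains $\F^8$. The canonical embedding $\F/N \hookrightarrow \F/(\F^4)^2 \times M(2,8)$ shows $\F/N$ is finite, hence $R(2,8)$ surjects onto $\F/N$, and the second isomorphism theorem gives
\[
|\F/N| = |\F/(\F^4)^2| \cdot |L| = 2^{4109} \cdot |L|,
\]
where $L = (\F^4)^2 \F^8 \F'' / \F^8 \F'' \leqslant M(2,8)$ is the image of $(\F^4)^2$ in $M(2,8)$. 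It remains to prove $|L| \ge 2^6$.

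An easy exponent computation places $L$ inside $M(2,8)'$: for $g \in \F^4$, the abelianized exponents of $g^2$ are divisible by $8$, so the image of $g^2$ in $M(2,8)^{ab} = \Z_8 \oplus \Z_8$ is trivial. Identifying $M(2,8)' \cong \ZZ/I_8$ via the winding isomorphism (where $I_8$ is the ideal generated by winding invariants of $\F' \cap \F^8$), and noting that $(\F^4)' \leqslant (\F^4)^2 \cap \F'$, the subgroup $L$ contains the classes of all commutators $[u^4, v^4]$. The prototype $[x^4, y^4]$ has winding invariant $P = (1+X+X^2+X^3)(1+Y+Y^2+Y^3)$, the indicator of the $4 \times 4$ square at the origin. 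The diagonal invariant of Theorem~\ref{muchosinvariantes} with $\varphi(1,0) = \varphi(0,1) = 1$ and the good coloring $c:\Z_8 \to \{\textrm{black, white}\}$ painting $0,1,2,3$ black yields $\inv(P) = 10 - 6 = 4 \in \Z_8$, confirming that $[x^4, y^4]$ is nontrivial in $M(2,8)'$.

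To secure the full factor of $2^6$, I would exhibit six specific commutators of fourth powers -- for instance $[x^4, y^4]$ together with suitable conjugates $g^{-1}[x^4, y^4]g$ and variants such as $[x^4, (xy)^4]$, $[y^4, (xy)^4]$ and $[(xy)^4, (xy^{-1})^4]$, all of which lie in $(\F^4)' \leqslant (\F^4)^2 \cap \F'$ -- and verify that their winding invariants are $\mathbb{F}_2$-linearly independent modulo $I_8$. Independence is checked by evaluating six suitably chosen horizontal, vertical, and diagonal invariants from Theorem~\ref{muchosinvariantes} on the six candidates and observing that the resulting $6 \times 6$ matrix over $\Z_8$ reduces mod $2$ to an invertible matrix over $\mathbb{F}_2$.

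The main obstacle is this final combinatorial step: choosing six commutators and six detector invariants so that the resulting mod-$2$ evaluation matrix is non-singular. This is a finite but delicate task, analogous to the construction of the five-coloring complete invariant for $M(2,4)$ in Example~\ref{ejm24}; the coincidence $|M(2,4)'| = 2^6$ strongly suggests that this is exactly the rank one expects to gain, reflecting how the derived structure of $M(2,4)$ is ``lifted'' into $L \subseteq M(2,8)'$ by the commutator pairing on fourth powers.
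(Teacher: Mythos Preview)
Your framework is sound and in fact slightly cleaner than the paper's: intersecting with $\F^8\F''$ (rather than the paper's $\F^8\F'''$) already gives a finite quotient, and the index formula $|\F/N|=2^{4109}|L|$ with $L\leqslant M(2,8)'$ is correct. The diagonal computation $\inv([x^4,y^4])=4\in\Z_8$ is also right.

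The genuine gap is that you stop exactly where the work begins. You explicitly defer the ``finite but delicate task'' of choosing six elements and six detecting invariants and checking that the $6\times 6$ matrix is nonsingular mod $2$. That verification \emph{is} the proof; without it you have only shown $|L|\ge 2$, which yields $|R(2,8)|\ge 2^{4110}$, not $2^{4115}$. The analogy with $|M(2,4)'|=2^6$ is heuristic, not an argument. Note also that your prototype $[x^4,y^4]$ is annihilated by every horizontal and vertical invariant (each row and column of the $4\times4$ indicator sums to $4$, and the black/white split is always $0$ or $\pm 8$ in $\Z_8$), so commutators of pure fourth powers force you into the diagonal invariants of Theorem~\ref{muchosinvariantes}, making the matrix check less routine than you suggest.

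The paper avoids this entirely by a different choice of witness. Instead of commutators of fourth powers, it takes the single element
\[
z=\bigl(x^4(x^{-1}y)^4y^{-4}\bigr)^2\in(\F^4)^2\cap\F',
\]
computes $\Omega(z)=(4,0,0,4,4,0,0,4)\in(\Z_8)^8$ using only the horizontal and vertical invariants, and observes that the nine conjugates $x^iy^jzy^{-j}x^{-i}$ ($i,j\in\{0,1,2\}$) produce all nine vectors with each half in $\{(4,0,0,4),(4,4,0,0),(0,4,4,0)\}$; these visibly generate a subgroup of order $2^6$. One element, one invariant $\Omega$, and a cyclic-shift argument replace your six-by-six search.
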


It is not hard to see that $\F^8\F'''$ is a finite-index subgroup of $\F$. Let $R=\F^8\F'\leqslant \F$, $S=R^8R'\leqslant \F$ and $T=S^8S'\leqslant \F$. Then $|\F:R|$ is finite, so $R$ has finite rank. Similarly $|R:S|$ is finite and $|S:T|$ is finite. Then $\F/T$ is finite. In fact we can compute its order explicitly, but we will not need that. Also, it is easy to prove that $T\leqslant \F^8\F'''$. It suffices to show that $S \leqslant \F^8\F''$, and similarly this follows from $R\leqslant \F^8\F'$. Thus, $\F^8\F'''$ is a finite-index subgroup of $\F$.

Since both $\F^8\F'''$ and $(\F^4)^2$ are finite-index subgroups of $\F$, $\frac{\F}{(\F^8\F''')\cap (\F^4)^2}$ is a finite group of exponent $8$, and its order gives a lower bound for $|R(2,8)|$. $$\left|\frac{\F}{(\F^8\F''')\cap (\F^4)^2}\right|=\left|\frac{\F}{(\F^4)^2} \right| \left|\frac{(\F^4)^2}{(\F^8\F''')\cap (\F^4)^2} \right|.$$

Let $\phi: \frac{(\F^4)^2\cap \F'}{(\F^8\F''')\cap (\F^4)^2 \cap \F'} \to (\Z_8)^8$ be the map induced by the restriction of $\Omega : \F' \to (\Z_8)^8$. Note that $\phi$ is well-defined since $\F^8\F''' \cap \F' \leqslant \F^8\F'' \cap \F'$. Let $z=(x^4(x^{-1}y)^4y^{-4})^2 \in (\F^4)^2\cap \F'$. It is easy to see that $\phi (z)=(4,0,0,4,4,0,0,4)$. Moreover, considering the conjugates $x^iy^jzy^{-j}x^{-i}$ with $i,j=0,1,2$, we obtain the nine $8$-tuples $(a_0,a_1,a_2,a_3,b_0,b_1,b_2,b_3)$ where $(a_0,a_1,a_2,a_3), (b_0,b_1,b_2,b_3)\in \{(4,0,0,4), (4,4,0,0), (0,4,4,0)\}$. These $8$-tuples generate a subgroup of $(\Z_8)^8$ of order $2^6$. Thus $|\frac{(\F^4)^2\cap \F'}{(\F^8\F''')\cap (\F^4)^2 \cap \F'}|\ge 2^6$. In particular $|R(2,8)|\ge 2^{4109}2^6=2^{4115}$.

\section{Other primes, other colors}

Up to this point we have only studied the case that the exponent $n$ is a power of $2$. Many of the techniques we developed, naturally extend to powers of different primes. In this section we will generalize the notion of good coloring. In fact this is a generalization of a variant of the notion we know already. For the case $p=2$ this new notion would have led to the same results already proved.

Let $p$ be a positive prime number. Given a subset $S$ of the integers and $d$ a positive integer, a \textit{$p$-matching of $S$ of difference $d$} is a partition of $S$ in which every part is an arithmetic progression of $p$ terms and common difference $d$.
We have the following version of Lemma \ref{facil}:

\begin{lema}
Let $n$, $d$ be positive integers. If $pd$ divides $n$, then there is a $p$-matching of $[n]=\{0,1,\ldots, n-1\}$ of difference $d$.
\end{lema}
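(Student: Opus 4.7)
The proof plan is to mimic directly the construction used in Lemma \ref{facil}, which is the special case $p=2$. The idea is first to build a $p$-matching of the initial block $[pd] = \{0, 1, \ldots, pd-1\}$ and then to tile $[n]$ by translates of this block.

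First I would exhibit a $p$-matching of $[pd]$ of difference $d$: for each $i$ with $0 \le i \le d-1$, take the arithmetic progression
\[
A_i \;=\; \{\, i,\; i+d,\; i+2d,\; \ldots,\; i+(p-1)d \,\}.
\]
These $d$ progressions are pairwise disjoint (their elements are distinguished by their residues modulo $d$), each has $p$ terms, and together they cover exactly the integers from $0$ to $pd-1$. So $\{A_0, A_1, \ldots, A_{d-1}\}$ is a $p$-matching of $[pd]$ of difference $d$.

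Next I would use the hypothesis $pd \mid n$ to extend the matching to all of $[n]$. Write $n = pdk$ with $k \ge 1$ a positive integer. For each $0 \le j \le k-1$ and each $0 \le i \le d-1$, consider the translated progression $A_i + jpd$. Each of these is still an arithmetic progression of $p$ terms with common difference $d$; the family $\{A_i + jpd : 0 \le i \le d-1,\ 0 \le j \le k-1\}$ is a disjoint union that covers $[n] = \{0, 1, \ldots, pdk-1\}$, yielding the desired $p$-matching.

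There is no real obstacle here: the statement reduces to the elementary tiling observation above, and the argument is a straightforward generalization of the $p=2$ case proved in Lemma \ref{facil}.
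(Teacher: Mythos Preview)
Your proof is correct and is exactly the intended argument: the paper does not even write out a proof of this lemma, stating only that it is similar to that of Lemma~\ref{facil}, and your construction is precisely the natural $p$-analogue of that proof (partition $[pd]$ into the $d$ residue-class progressions, then tile $[n]$ by translates).
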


Let $k$ be a positive integer and let $n=p^k$. A \textit{$p$-good coloring of $\Z_n$} is a function $c:\Z_n\to \Z_n$ such that for every arithmetic progression $a,a+\frac{n}{p},a+2\frac{n}{p},\ldots ,a+(p-1)\frac{n}{p}$ of $p$ terms and difference $\frac{n}{p}$, we have that the sum $c(a)+c(a+\frac{n}{p})+c(a+2\frac{n}{p})+c(a+(p-1)\frac{n}{p})$ of its colors is zero in $\Z_n$. Note that a $2$-good coloring is not exactly the same as a good coloring in the sense of previous sections. We have the following lemma.

\begin{lema} \label{lemap}
Let $k\ge 1$, $n=p^k$. Let $c:\Z_n \to \Z_n$ be a $p$-good coloring. 

(i) Let $a, a+b, a+2b, \ldots, a+(n-1)b$ be an arithmetic progression of length $n$ in $\Z$. Then we have $c(a)+c(a+b)+c(a+2b)+ \ldots +c(a+(n-1)b)=0\in \Z_n$.

(ii) Let $a, a+b, a+2b, \ldots, a+(\frac{n}{p}-1)b$ be an arithmetic progression of length $\frac{n}{p}$ in $\Z$ with $b$ a multiple of $p$. Then the sum of the colors of its terms is a multiple of $\frac{n}{p}$.
\end{lema}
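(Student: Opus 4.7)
The plan is to mimic closely the proof of Lemma \ref{facil2}, replacing the $2$-matching/opposite-color argument by the $p$-matching/$p$-good-sum argument. The key combinatorial input is the preceding generalization of Lemma \ref{facil}: whenever $pd \mid n$, the set $[n]$ admits a $p$-matching of difference $d$. Applied with $d=n/(pd')$ this will partition progressions in $\Z$ into parts that, reduced mod $n$, are precisely the $p$-term progressions of common difference $n/p$ on which the coloring behaves well.

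For part (i), first dispose of the trivial case $n\mid b$: then all $n$ terms receive the same color $c(a)$, and the sum is $n\,c(a)=0\in\Z_n$. Otherwise let $d=\gcd(n,b)$; since $n=p^k$ and $d<n$, we have $d=p^j$ for some $j\le k-1$, in particular $pd\mid n$. By the $p$-matching lemma, $[n]$ admits a $p$-matching of difference $n/(pd)$. Multiplying by $b$ and translating by $a$, the sequence $a+b[n]\subseteq\Z$ admits a $p$-matching of difference $b\cdot n/(pd)=(b/d)(n/p)$. Now $b/d$ is coprime to $n/d$, and $n/d$ is a positive power of $p$, so $b/d$ is a unit modulo $p$. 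Thus each matched part, viewed in $\Z_n$, is of the form $\{e,\,e+j(n/p),\,e+2j(n/p),\,\ldots,\,e+(p-1)j(n/p)\}$ with $\gcd(j,p)=1$, which coincides as a set with $\{e,\,e+n/p,\,\ldots,\,e+(p-1)n/p\}$. By the definition of a $p$-good coloring, the colors of each such part sum to $0\in\Z_n$, and summing over parts gives the claim.

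Part (ii) runs along the same lines, with the hypothesis $p\mid b$ playing the role of ``$b$ even'' in Lemma \ref{facil2}(ii). If $n\mid b$, then all $n/p$ terms have the same color $c(a)$ and the sum is $(n/p)c(a)$, which is a multiple of $n/p$ in $\Z_n$. Otherwise $d=\gcd(n,b)<n$; since $p\mid b$ and $d\mid n=p^k$, we have $p\mid d$, so $d\ge p$ and consequently $pd\mid n$ with $d\mid n/p$. The $p$-matching lemma then yields a $p$-matching of $[n/p]$ of difference $n/(pd)$, and the transported matching of $a+b[n/p]$ has difference $(b/d)(n/p)$. Exactly as before, $b/d$ is a unit mod $p$, so each matched part projects to an arithmetic progression of $p$ terms and common difference $n/p$ in $\Z_n$; its colors sum to $0\in\Z_n$ by $p$-goodness, and hence so does the full sum, which is in particular a multiple of $n/p$.

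The only real subtlety is verifying that $b/d$ is a unit modulo $p$, which comes down to $\gcd(b/d,n/d)=1$ together with $n/d$ being a nontrivial power of $p$; this is what makes the common difference $(b/d)(n/p)$ equivalent mod $n$ to $n/p$ and lets the $p$-good coloring hypothesis kick in. Once this is in place, both parts are immediate from the matching lemma and I expect no further obstacle.
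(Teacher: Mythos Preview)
Your proof is correct and follows exactly the route the paper intends: the paper omits the proof of Lemma~\ref{lemap}, saying it is similar to that of Lemmas~\ref{facil} and~\ref{facil2}, and your argument is precisely the natural $p$-analogue of that proof, replacing the $2$-matching by the $p$-matching and the opposite-colors observation by the $p$-good sum condition. The only point worth noting is that your argument in part~(ii) actually shows the sum is $0\in\Z_n$ when $n\nmid b$, which is slightly stronger than the stated conclusion (and parallels the ``Moreover'' clause of Lemma~\ref{facil2}(ii)).
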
 

The proofs of these lemmas are similar to the proofs of Lemmas \ref{facil} and \ref{facil2}, so we omit them.

Given a function $\varphi: \Z \times \Z \to \Z_n$ and a $p$-good coloring $c$ of $\Z_n$, define the associated invariant $\inv=\inv_{\varphi, c} :\ZZ \to \Z_n$ by $$\inv (P)= \sum\limits_{(i,j)\in \Z \times \Z} c\varphi (i,j) P\{(i,j)\} \in \Z_n.$$ For $z\in \F'$ we define $\inv (z)=\inv (P_z)$.

\begin{teo} \label{muchosinvariantesp}
Let $p$ be a positive prime, $k\ge 2$, $n=p^k$. Let $c$ be a $p$-good coloring of $\Z_n$ and let $\varphi:\Z \times \Z\to \Z_n$ be a homomorphism such that $\varphi(1,0)$, $\varphi(0,1) \in \Z_n$ are invertible. Let $t:\Z \times \Z \to \Z \times \Z$ be a translation by any vector $(i_0,j_0)\in \Z \times \Z$. Let $\inv=\inv_{\varphi t, c}$ be the associated invariant. Then for any $z\in \F'$ which is a product of $n$th powers in $\F$, $\inv(z) \in \Z_{n}$ is trivial in $\Z_{\frac{n}{p}}$.
\end{teo}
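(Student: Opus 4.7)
The plan is to mirror the three-part strategy of the proof of Theorem \ref{muchosinvariantes}, substituting the $p$-ary Lemma \ref{lemap} for its binary analogue Lemma \ref{facil2} throughout. The weaker conclusion ($\inv(z)\in(n/p)\Z_n$ rather than $\inv(z)=0$) reflects both that Lemma \ref{lemap}(ii) only places the relevant sums in $(n/p)\Z_n$, and that the parity trick that improved the $p=2$ case to vanishing modulo $n$ has no odd-prime counterpart.

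I would begin with the standard reductions. The identity $\inv_{\varphi t,c}(z)=\inv_{\varphi,c}(x^{i_0}y^{j_0}zy^{-j_0}x^{-i_0})$, together with the fact that conjugation preserves the property of being a product of $n$-th powers, reduces us to $(i_0,j_0)=(0,0)$. Since $\inv$ is a group homomorphism, inserting intermediate conjugating factors $v_j^{\pm n}$ as in Step 1 of the proof of Theorem \ref{omega} reduces the problem to $z=u^nv^nw^n$. I then argue that $\inv(z)$ depends only on the $x,y$-exponents of $u,v,w$: replacing $u$ by $\tilde u$ with identical exponents makes $z^{-1}\tilde z$ a conjugate of the product of two $n$-th powers $u^{-n}\tilde u^n\in\F'$, so by Proposition \ref{dospotencias}, $P_{z^{-1}\tilde z}$ is a $\ZZ$-multiple of some $m'(1+m+\cdots+m^{n-1})$. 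Expanding monomial by monomial, $\inv$ applied to such a polynomial is a sum of terms $\sum_{i=0}^{n-1} c\bigl(\varphi(m'')+i\varphi(m)\bigr)$, each a sum of $c$ over an arithmetic progression of length $n$ in $\Z_n$, which vanishes by Lemma \ref{lemap}(i). The analogous argument handles changes in $v$ and $w$.

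Next, choose the normal form $u=x^ay^b$, $v=x^{c-a}y^{d-b}$, $w=x^{-c}y^{-d}$ (reordering $x,y$ within each factor as needed) so that the curve $\gamma_z$ lies outside the triangle $T$ with vertices $(0,0),(a,b),(c,d)$. Let $R$ be a circumscribing rectangle with side lengths multiples of $n$ and $T'$ the region bounded by $\gamma_z$. Then $\pm\inv(z)=\inv(T')=\inv(R)-\inv(R\setminus T')$. A row-by-row decomposition shows $\inv(R)=0\in\Z_n$ via Lemma \ref{lemap}(i), since each row is a length-$n$ progression of common difference $\varphi(1,0)$. The region $R\setminus T'$ is the union of three $(n-1)$-step stairs, plus possibly one rectangle with sides divisible by $n$ (again killed by Lemma \ref{lemap}(i)). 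So the whole theorem reduces to verifying that every $(n-1)$-step stair with step dimensions $h\times l$, in each of the four possible orientations, has invariant in $(n/p)\Z_n$.

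The main obstacle is this last claim. I would run a case analysis on the residues of $h$ and $l$ modulo $p$, paralleling the four parity cases of Theorem \ref{muchosinvariantes}. When exactly one of $h,l$ is coprime to $p$, the stair splits into a region $A$ and a translate $-A$ offset by a multiple of $n/p$ in the direction in which the coloring shifts: the $p$ translates of $-A$ at spacings of $n/p$ cancel to $0\in\Z_n$ by the $p$-good coloring condition (here the invertibility of $\varphi(1,0)$ and $\varphi(0,1)$ ensures that the induced shifts in $\Z_n$ generate the order-$p$ subgroup $(n/p)\Z_n$), leaving only a rectangle with one side divisible by $n$. When both $h,l$ are divisible by $p$, an inductive decomposition into $p^2$ sub-stairs of proportionally smaller step size (the analogue of Case 3) reduces to configurations whose characteristic polynomials contain the factor $1+m+\cdots+m^{n-1}$. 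The decisive case is when both $h,l$ are coprime to $p$: here the distinguished sub-region $C$ has characteristic polynomial $m'(1+m+\cdots+m^{n/p-1})$ with $m=X^lY^{-h}$, and the stair is tiled by $hl\cdot(n/p)$ disjoint translates of $C$. Regardless of the value of $\inv(C)\in\Z_n$, the stair's invariant $hl\cdot(n/p)\cdot\inv(C)$ is automatically a multiple of $n/p$ in $\Z_n$, which is precisely the conclusion sought. The stronger vanishing $\inv(z)=0\in\Z_n$ obtained for $p=2$ relied on $hl\cdot n/2$ being even when $k\ge 2$; for odd $p$ no parallel improvement is available, and the theorem must be stated modulo $n/p$.
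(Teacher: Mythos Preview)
Your overall plan and the reductions (to $(i_0,j_0)=(0,0)$, to three $n$th powers, to stairs) are correct and match the paper. Cases 1--3 are sketched loosely but are essentially the paper's arguments; the paper actually uses only $p$ sub-stairs in Case 3 rather than $p^2$, but your variant is plausible.

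The genuine gap is Case 4 (both $h,l$ coprime to $p$). Your argument tries to port the $p=2$ computation verbatim, and this fails at several points. First, the tiling count is wrong: the $(n-1)$-step stair has area $hl\cdot n(n-1)/2$, while $hl\cdot(n/p)$ copies of your diagonal $C$ cover only $hl\cdot(n/p)^2$ squares; these disagree for every $p>2$. Second, and more seriously, even after one removes the cancelled regions $A_0,\ldots,A_{p-1}$, the remainder is not a single parallelogram but $\tfrac{p(p-1)}{2}$ translates of one, and translates of $C$ do \emph{not} all share the same invariant: when $n\mid\varphi(l,-h)$ the invariant of a translate equals $\tfrac{n}{p}\,c(a)$, which genuinely depends on the starting value $a$ once $p>2$ (the $p=2$ accident $n/2\equiv -n/2$ is what made all translates equal there). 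So the formula ``$hl\cdot(n/p)\cdot\inv(C)$'' is not the invariant of the region. Third, for $p>2$ the difference $b=\varphi(l,-h)=l\varphi(1,0)-h\varphi(0,1)$ need not be divisible by $p$ at all, so Lemma \ref{lemap}(ii) cannot be invoked for a single copy of $C$.

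The paper's Case 4 requires a new combinatorial idea that you are missing. After removing $A_0,\ldots,A_{p-1}$ one takes, inside the complement, a ``grey'' configuration of single squares arranged in $p-1$ diagonal lines of lengths $\tfrac{(p-1)n}{p},\tfrac{(p-2)n}{p},\ldots,\tfrac{n}{p}$ and common shift $v=(l,-h)$. If $b=\varphi(v)$ is not invertible each line has invariant in $(n/p)\Z_n$ by Lemma \ref{lemap}(ii). If $b$ is invertible one proves (this is the crux) that the images under $\varphi$ of these $p-1$ lines can be concatenated into arithmetic \emph{cycles} in $\Z_n$ of common difference $b$; since $b$ is a unit such cycles have length divisible by $n$, and Lemma \ref{lemap}(i) kills them in $\Z_n$. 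The full complement is then a union of $hl$ translates of the grey configuration, yielding invariant in $(n/p)\Z_n$. This cycle-reassembly argument is the substantive content you need to supply.
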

\begin{proof}
The proof is almost identical to the proof of Theorem \ref{muchosinvariantes}. The difference is perhaps in case 4. The cost that we pay for using $p\neq 2$ and the general notion of $p$-good coloring is that $\inv (z)$ may be nontrivial in $\Z_n$, so we have to descend to $\Z_{\frac{n}{p}}$.

We may assume $(i_0,j_0)=(0,0)$ and that $z$ is a product of three $n$th powers. By Lemma \ref{lemap} we only have to study the invariant for stair-like regions.

Case 1: $p \nmid h$, $p| l$. Inside the stair of $n-1$ steps, consider the region $A_0$ formed by the top $\frac{n}{p}-1$ levels (see Figure \ref{caso1p}). We define the regions $A_1, A_2, \ldots, A_{p-1}$, where $A_{i+1}$ is the translation of $A_i$ by $\frac{hn}{p}$ units downwards.

\begin{figure}[h] 
\begin{center}
\includegraphics[scale=0.9]{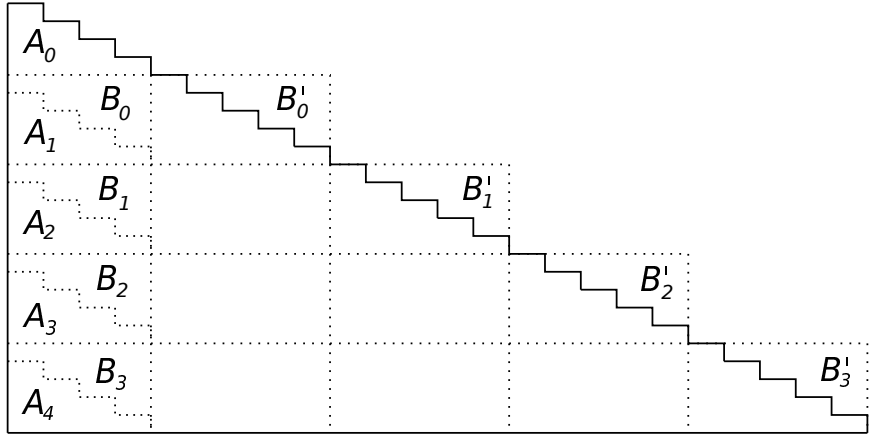}
\caption{Here $p=5$, $n=5^2$, $p \nmid h$ and $p | l$.}\label{caso1p}
\end{center}
\end{figure}

Since $p \nmid \varphi (0,1)$ and $c$ is $p$-good, the sum of the invariants of the regions $A_i$ is trivial in $\Z_n$. For $0\le i \le p-2$ we define $B_i$ as the region between $A_i$ and $A_{i+1}$. $B_i'$ is the translation of $B_i$ by $(i+1)\frac{ln}{p}$ units to the right, so it is outside the original stairs. Since $n| \frac{(i+1)ln}{p}$, $B_i'$ and $B_i$ are painted in the same way. Therefore the invariant $\inv$ of the original stairs is a sum of invariants of rectangles with one side of length divisible by $n$, and thus, trivial in $\Z_n$.

Case 2: $p | h$, $p \nmid l$ is symmetric to case 1.

Case 3: $p| h$, $p | l$. We must show that the stairs with $n-1$ steps have trivial invariant. We will change here the proof we did in Theorem \ref{muchosinvariantes} just a bit to make the argument simpler. We can add one more step to the stairs without changing the value of $\inv$, since the $l$ new columns that we added have height $hn$, a multiple of $n$.   

The $n$-step stair appears in Figure \ref{caso3p}. We consider region $A_0$ inside the top $n/p$ levels of the stairs. $A_0$ is an $n$-step stair itself but with height $h/p$ and length $l/p$. We define $A_1,A_2, \ldots, A_{p-1}$ where $A_{i+1}$ is obtained from $A_i$ by a translation of $\frac{n}{p}(l,-h)$. By induction each stair $A_i$ has invariant $\inv$ trivial in $\Z_{\frac{n}{p}}$. In fact this holds when we remove one step from $A_i$, but again, removing columns of height $\frac{hn}{p}$ does not change $\inv$ since this is a multiple of $n$. The region below the $A_i$'s is a union of rectangles of size $\frac{ln}{p} \times \frac{hn}{p}$, so its invariant is trivial in $\Z_n$. The region above the $A_i$'s appears in Figure \ref{caso3p} with the sign $+$. This region is a union of $n$ regions of the form $C$, $C+v$, $C+2v$, $\ldots$, $C+(n-1)v$ for certain vector $v$ and certain region $C$. By Lemma \ref{lemap}, its invariant is trivial in $\Z_n$.

\begin{figure}[h] 
\begin{center}
\includegraphics[scale=0.7]{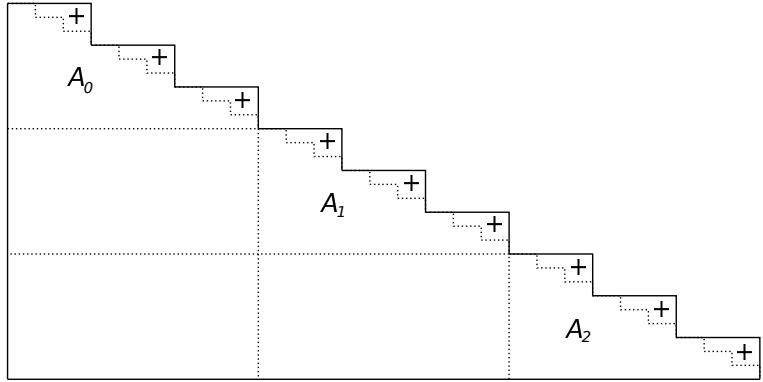}
\caption{Here $p=3$, $n=3^2$, $p | h$ and $p | l$.}\label{caso3p}
\end{center}
\end{figure}

Case 4: $p \nmid h$, $p \nmid l$. We have arrived to the crux of the proof, where we have to pay the price that $\inv$ is trivial in $\Z_{\frac{n}{p}}$ and maybe not in $\Z_n$. Consider the regions $A_0, A_1, \ldots , A_{p-1}$ defined as in case 1 (see Figure \ref{caso4p}).

\begin{figure}[h] 
\begin{center}
\includegraphics[scale=1]{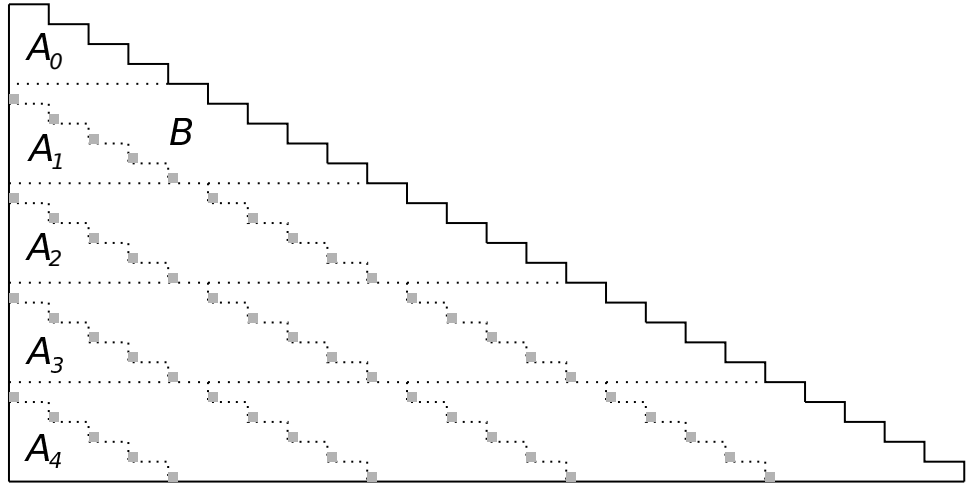}
\caption{Here $p=5$, $n=5^2$, $p \nmid h$ and $p \nmid l$.}\label{caso4p}
\end{center}
\end{figure}	

Let $B$ be the region inside the stairs which is at the right of $A_1$. The complement $C$ of the region formed by the $A_i$'s is a union of $\frac{p(p-1)}{2}$ translates of $B$. Take the square $Q$ in $B$ which is a neighbor of $A_1$ and is in the left margin of the stairs. Let $v=(l,-h)$, $v'=(0,-h)$. Consider the $\frac{(p-1)n}{2}$ translates of $Q$ which appear with gray in Figure \ref{caso4p}. They are

(line $0$)  $Q,Q+v,Q+2v, \ldots , S+(\frac{(p-1)n}{p}-1)v,$

(line $1$) $Q+\frac{n}{p}v', Q+\frac{n}{p}v'+v, Q+\frac{n}{p}v'+2v, \ldots, Q+\frac{n}{p}v'+(\frac{(p-2)n}{p}-1)v,$

(line $2$) $Q+\frac{2n}{p}v', Q+\frac{2n}{p}v'+v, Q+\frac{2n}{p}v'+2v, \ldots, Q+\frac{2n}{p}v'+(\frac{(p-3)n}{p}-1)v,$

$\ldots$

(line $p-2$) $Q+\frac{(p-2)n}{p}v', Q+\frac{(p-2)n}{p}v'+v, Q+\frac{(p-2)n}{p}v'+2v, \ldots, Q+\frac{(p-2)n}{p}v'+(\frac{n}{p}-1)v$. 

If $b=\varphi (v) \in \Z_n$ is not invertible, then by Lemma \ref{lemap} (ii), the sum of the invariants of the translates in each of this lines is trivial in $\Z_{\frac{n}{p}}$, so the invariant of the grey region is trivial in $\Z_{\frac{n}{p}}$. 

If $b=\varphi (v)$ is invertible in $\Z_n$, then we claim that we can divide the $p-1$ lines above in groups, in such a way that for each group, the sequences obtained after applying $\varphi$ can be concatenated to form an arithmetic progression in $\Z_n$ of length divisible by $n$. 

Let $a=\varphi (Q)$, $b'=\varphi (v')$. When we apply $\varphi$ to line 0 above, we obtain an arithmetic progression of difference $b$ which begins in $a$ and finishes in $a+(\frac{(p-1)n}{p}-1)b$. So, the next term of this sequence should be $a+\frac{(p-1)n}{p}b$.

After applying $\varphi$, line 1 begins with $a+\frac{n}{p}b'$ and finishes with $a+\frac{n}{p}b'+(\frac{(p-2)n}{p}-1)b$, so the next term should be $a+\frac{n}{p}b'+\frac{(p-2)n}{p}b$.

We do this for each line. Line $i$ begins with $a+i\frac{n}{p}b'$ and the next term after the last one should be $a+i\frac{n}{p}b'+\frac{(p-(i+1))n}{p}b$. In order to prove that these lines can be glued to form longer arithmetic progressions, we will prove the following Claim: the subsets $\{a+i\frac{n}{p}b'\}_{0\le i\le p-2}$ and $\{a+i\frac{n}{p}b'+\frac{(p-(i+1))n}{p}b\}_{0\le i\le p-2}$ of $\Z_n$ are equal and each of them has $p-1$ elements. 

This is equivalent to proving that the subsets $\{ib'\}_{0\le i\le p-2}$ and $\{ib'-(i+1)b\}_{0\le i\le p-2}$ of $\Z_p$ are equal and that each has $p-1$ elements. That their cardinality is $p-1$ follows from the fact that $b'$ and $b'-b$ are invertible in $\Z_p$ and this, in turn, follows from the hypothesis that $\varphi(1,0)$ and $\varphi (0,1)$ are invertible and that $p\nmid hl$. In order to see that both sets are equal, it suffices to observe that the missing element $(p-1)b'$ of the first set is equal to the missing element $(p-1)b'-pb$ in the second. 
%Indeed, if $(p-1)b'=ib'-(i+1)b\in \Z_p$, then $(i+1)(b'-b)=0$, which is absurd since $b'-b$ is invertible and $0\le i\le p-2$. 

The Claim says that we can glue the $p-1$ arithmetic progressions to form arithmetic cycles in $\Z_n$ of difference $b$. By arithmetic cycle of difference $b$ we mean an arithmetic progression $e,e+b, e+2b, \ldots, e+(t-1)b \in \Z_n$ such that $e+tb=e\in \Z_n$. Since $b\in \Z_n$ is invertible, the length $t$ of one such cycle must be a multiple of $n$. Thus, the invariant $\inv$ of the grey region is the sum of the colors of arithmetic progressions of lengths divisible by $n$. By Lemma \ref{lemap} (i), the invariant of the grey region is trivial in $\Z_{\frac{n}{p}}$. 

Now, region $C$ is a union of translates of the grey region, so we have that $\inv (C)=0\in \Z_{\frac{n}{p}}$ and then the invariant of the stairs is also trivial in $\Z_{\frac{n}{p}}$.
\end{proof}

In the hypotheses of Theorem \ref{muchosinvariantesp}, $\inv (z)$ need not be trivial in $\Z_n$. This fails even for $p=2$. Let $n=4$. Consider the $2$-good coloring $c:\Z_4 \to \Z_4$ defined by $c(0)=c(2)=0$, $c(1)=1$, $c(3)=3$. Let $\varphi : \Z \times \Z \to \Z_n$ be defined by $\varphi (1,0)=\varphi (0,1)=1$. Then $\inv_{\varphi, c}(x^4(x^{-1}y)^4y^{-4})=2\neq 0\in \Z_4$.

\section{The free nilpotent group of class $2$ and exponent $n$} \label{final}

In the previous sections we tried to understand some features of the free metabelian group $M(2,n)$ of rank $2$ and exponent $n$. The free nilpotent group of class $2$, rank $2$ and exponent $n$ is the group $N(2,n)=\frac{\F}{\gamma_3(\F) \F^n}$, where $\gamma_3(\F)=[\F,\F']$ denotes the third term in the lower central series of $\F$. Since $\F'' \leqslant \gamma_3(\F)$, $N(2,n)$ is a quotient of $M(2,n)$. This group is much easier to understand, even for arbitrary $n\ge 1$, using invariants. In fact the area invariant $A:\F' \to \Z$, $A(z)=P_z(1,1)$ describes $N(2,n)$ completely in the following sense.

We already know by Remark \ref{remark} that if $z \in \gamma_3(\F)$, then $P_z$ is a sum of multiples of polynomials of the form $m-1$ where $m$ is a monomial, so the area $A(z)=0$. Conversely, each $P\in \ZZ$ with $P(1,1)=0$ is a linear combination of $(X-1)$ and $(Y-1)$ so, $A(z)=0$ implies that $z\in \gamma_3(\F)$. If $n$ is even, denote by $\overline{A}:\F' \to \Z_{\frac{n}{2}}$ the composition of $A$ with the projection. If $n$ is odd, take $\overline{A}: \F' \to \Z_n$, instead. Theorem 59 in \cite{wind} implies that $\gamma_3(\F)\F^n \cap \F' \leqslant \ker (\overline{A})$. Conversely, if $z \in \ker (\overline{A})$, there exists $w\in \F^n$ with the same area: indeed $A(x^n(x^{-1}y)^ny^{-n})=\frac{n(n-1)}{2}$ and $A([x,y]^n)=n$, so a product of powers of those two elements has area $\gcd (\frac{n(n-1)}{2}, n)$, which is $\frac{n}{2}$ or $n$, depending on the parity of $n$. Thus $A(w^{-1}z)=0$, which shows that $w^{-1}z\in \gamma_3(\F)$, so $z\in \gamma_3(\F)\F^n \cap \F'$. Since $\overline{A}$ is surjective, we deduce that $N(2,n)'$ is isomorphic to $\Z_{\frac{n}{2}}$ for even $n$ and to $\Z_n$ for odd $n$. Since the abelianization of $N(2,n)$ has order $n^2$, we conclude that $|N(2,n)|=\frac{n^3}{2}$ for even $n$  and $|N(2,n)|=n^3$ for odd $n$.     

So, this is a case where the idea of invariants works pretty well. This case is of course much simpler than the metabelian.

In \cite{San}, Sanov proved that for any prime $p$ and $1\le i \le k$, the Engel congruence $e_{ip^i-1} (x,y)^{p^{k-i}} \in \gamma_{ip^i+1}(\F)\F^{p^k}$ holds. When $p=2$ and $i=1$, this reduces to $[x,y]^{2^{k-1}} \in  \gamma_3(\F) \F^{2^k}$. An alternative proof of this particular case has been given by Struik in \cite[Theorem 1]{Str} in which she proves that $[x,y]^{\frac{n}{2}}\in \gamma_3(\F)\F^{n}$ for each even positive integer $n$. Note that this follows immediately from the exposition above: $A([x,y]^{\frac{n}{2}})=\frac{n}{2}$, so $[x,y]^{\frac{n}{2}} \in \ker (\overline{A})=\gamma_3(\F)\F^n \cap \F'$. In fact, the area invariant gives a complete description of all the elements in $\gamma_3(\F)\F^n$.

%The area invariant is sufficient to understand the free nilpotent group $N(2,n)$ of class $2$, rank $2$ and exponent $n$ for every $n\ge 1$, in the following sense. Here $N(2,n)=\frac{\F}{\gamma_3(\F) \F^n}$, where $\gamma_3(\F)=[\F,\F']$ is the third term in the lower central series of $\F$. If $n$ is even, denote also by $A:\F' \to \Z_{\frac{n}{2}}$ the composition of the area invariant with the projection. Then $A$ is surjective and $\ker (A)=(\gamma_3(\F) \F^n) \cap \F'$, so it determines an isomorphism $N(2,n)'\to \Z_{\frac{n}{2}}$. 

 %which induces a group homomorphism $\overline{\Omega}:B(2,n)'/B(2,n)''\to (\Z_n)^n$.
 %
%
%omega se anula en $\gamma_{n+2}(\F)$? basta ver que $\binom{n}{l}\equiv \binom{n}{l+n/2}$.
%
%sobre la sobreyectividad de omega. en particular caso $k=2$. incorporar area en gral y exponentes de x e y. cota inferior para el orden del free metabelian de exponent n.
%
%caso $n=9$
%
%conjetura de bruck modulo F''?
%
%Lo que dice struik de sanov es fácil de ver con winding invariant.

%If $z\in \F'$ is a product of three fifth powers $z=u^5v^5w^5=((uvw)w^{-1}v^{-1})^5v^5w^5$. Call $a=uvw \in \F'$, then $P_z=P_a(1+m_{w^{-1}v^{-1}}+m_{w^{-1}v^{-1}}^2+m_{w^{-1}v^{-1}}^3+m_{w^{-1}v^{-1}}^4)+P_{w^{-1}v^{-1}w^{-1}v^{-1}w^{-1}v^{-1}w^{-1}v^{-1}w^{-1}v^{4}w^5}$

\end{document}